\newtheorem{theorem}{Theorem}[section]
\newtheorem{lemma}[theorem]{Lemma}
\newtheorem{proposition}[theorem]{Proposition}
\theoremstyle{definition}
\newtheorem{definition}[theorem]{Definition}
\newtheorem{remark}[theorem]{Remark}
\newtheorem{conjecture}[theorem]{Conjecture}
\numberwithin{equation}{section}
\newcommand\N{\mathbb{N}}
\newcommand\n{\textup{n}}
\renewcommand{\pmod}[1]{\ (\mathrm{mod}\ #1)}
\renewcommand\d{\textnormal{ d}}
\renewcommand\:{\colon}
\begin{document}

\author[J. Ter\"av\"ainen]{Joni Ter\"av\"ainen}
\address{Department of Mathematics and Statistics\\ University of Turku, 20014 Turku\\
Finland}
\email{joni.p.teravainen@gmail.com}

\title{Pointwise convergence of ergodic averages with M\"obius weight}

\begin{abstract} Let $(X,\nu,T)$ be a measure-preserving system, and let  $P_1,\ldots, P_k$ be polynomials with integer coefficients. We prove that, for any $f_1,\ldots, f_k\in L^{\infty}(X)$, the M\"obius-weighted polynomial multiple ergodic averages
\begin{align*}
\frac{1}{N}\sum_{n\leq N}\mu(n)f_1(T^{P_1(n)}x)\cdots f_k(T^{P_k(n)}x)    \end{align*}
converge to $0$ pointwise almost everywhere. Specialising to $P_1(y)=y, P_2(y)=2y$, this solves a problem of Frantzikinakis. We also prove pointwise convergence for a more general class of multiplicative weights for multiple ergodic averages involving distinct degree polynomials. For the proofs we establish some quantitative generalised von Neumann theorems for polynomial configurations that are of independent interest.
\end{abstract}

\maketitle

\section{Introduction}

Let $(X,\nu)$ be a probability space and $T\: X\to X$ an invertible measure-preserving map, meaning that $\nu(T^{-1}(E))=\nu(E)$ for all measurable  sets $E\subset X$. The triple $(X,\nu,T)$ is called a measure-preserving system. Given functions $f_1,\ldots, f_k\in L^{\infty}(X)$ and polynomials $P_1,\ldots, P_k\in \mathbb{Z}[y]$, we form the polynomial multiple ergodic averages
\begin{align}\label{eq:unweighted}
\frac{1}{N}\sum_{n\leq N}f_1(T^{P_1(n)})\cdots f_k(T^{P_k(n)}x),\quad x\in X.   \end{align}
The convergence properties of these averages as $N\to \infty$ have been studied intensively. The question of their $L^2(X)$ norm convergence was settled by Host--Kra~\cite{host-kra-polynomial} and  Leibman~\cite{leibman-polynomial} after a series of substantial progress by several authors. The question of pointwise convergence, however, remains open and is the subject of the celebrated Furstenberg--Bergelson--Leibman conjecture~\cite[Section 5.5]{bergelson-leibman-roth}. Pointwise convergence has so far been established in two notable cases, namely the case where $k=1$ and the case where $k=2$ and $P_1$ is linear. The former follows from celebrated work of Bourgain~\cite{Bourgain-polynomial}, and the latter follows from another well known work of Bourgain~\cite{Bourgain-twopoint} if $P_2$ is linear and from a recent breakthrough of Krause, Mirek and Tao~\cite{krause-mirek-tao} if $P_2$ has degree at least $2$.

Let $\mu$ be the M\"obius function, defined by $\mu(n)=(-1)^k$ if $n$ is the product of $k$ distinct primes and by $\mu(n)=0$ if $n$ is divisible by the square of a prime. In this paper, we shall consider the M\"obius-weighted polynomial multiple ergodic averages
\begin{align*}
\frac{1}{N}\sum_{n\leq N}\mu(n)f_1(T^{P_1(n)})\cdots f_k(T^{P_k(n)}x),\quad x\in X.   
\end{align*}
Based on the M\"obius randomness principle (see~\cite[Section 13]{iw-kow}), it is natural to conjecture the following.

\begin{conjecture}\label{conj1}
 Let $k\in \mathbb{N}$, and let  $P_1,\ldots, P_k$ be polynomials with integer coefficients. Let $(X,\nu,T)$ be a measure-preserving system.
Then, for any $f_1,\ldots, f_k\in L^{\infty}(X)$, we have 
\begin{align*}
\lim_{N\to \infty}\frac{1}{N}\sum_{n\leq N}\mu(n)f_1(T^{P_1(n)}x)\cdots f_k(T^{P_k(n)}x)=0
\end{align*}
for almost all $x\in X$. 
\end{conjecture}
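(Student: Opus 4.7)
The plan is to deduce Conjecture~\ref{conj1} from two ingredients: a quantitative generalised von Neumann theorem for polynomial configurations (the kind announced in the abstract), and the Gowers uniformity of the Möbius function established by Green--Tao--Ziegler. Together these should yield a quantitative $L^2(X)$ bound that decays faster than any fixed power of $\log N$, and such a bound upgrades to pointwise convergence by a standard subsequence and Borel--Cantelli argument.

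Writing
\[A_N(x)=\frac{1}{N}\sum_{n\leq N}\mu(n)f_1(T^{P_1(n)}x)\cdots f_k(T^{P_k(n)}x),\]
the first and main step is to establish $\|A_N\|_{L^2(X)}\ll_A (\log N)^{-A}$ for every $A>0$. For this I would apply iterated Cauchy--Schwarz together with the PET scheme of Bergelson, each step lowering the maximal degree of the polynomial family until the remaining configuration is linear and the ergodic average can be dualised into a seminorm estimate on $\mu$. Careful quantitative bookkeeping of this reduction should yield an inequality of the form $\|A_N\|_{L^2(X)}^{2^s}\ll \|\mu\|_{U^s[N]}^{c}+o_{N\to\infty}(1)$ for a suitable $s=s(k,\deg P_i)\in\N$ and $c>0$, uniformly over $f_i$ with $\|f_i\|_{L^\infty}\leq 1$. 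Substituting the Green--Tao--Ziegler bound $\|\mu\|_{U^s[N]}\ll_A(\log N)^{-A}$ then delivers the claimed decay.

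Upgrading to pointwise convergence is then routine: along the sparse subsequence $N_j=\lfloor e^{\sqrt{j}}\rfloor$ one has $\sum_j\|A_{N_j}\|_{L^2(X)}^2<\infty$, so $A_{N_j}(x)\to 0$ for almost every $x$ by Borel--Cantelli; and the oscillation $\max_{N_j\leq N<N_{j+1}}|A_N(x)-A_{N_j}(x)|$ is bounded deterministically by a constant times $N_{j+1}/N_j-1=O(j^{-1/2})$, which tends uniformly to zero.

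The principal obstacle is the $L^2$ step. Qualitative generalised von Neumann theorems for polynomial configurations are standard, but tracking the PET induction quantitatively, so as to transfer the $(\log N)^{-A}$ decay of $\|\mu\|_{U^s[N]}$ into the same decay for the $L^2$ norm of $A_N$, requires controlling many intermediate error terms and is exactly what the new quantitative generalised von Neumann theorems announced in the paper are designed to accomplish. A secondary subtlety is the precise dependence of $s$ and $c$ on $k$ and the degrees of the $P_i$; this is most delicate when several of the $P_i$ share the same degree, where the PET reduction is longest.
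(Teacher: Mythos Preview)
Your proposal is correct and matches the paper's approach: a quantitative PET-based generalised von Neumann theorem (the paper's Theorem~\ref{thm:GVNT1}) bounds the $L^2(X)$ norm of $A_N$ by $\|\mu\|_{U^s[N]}$ with polynomial dependence, and a lacunary-subsequence/Borel--Cantelli argument (Lemma~\ref{le:borel-cantelli}) then upgrades the resulting $(\log N)^{-A}$ decay to pointwise convergence, exactly as you outline.

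One substantive correction, however: the bound $\|\mu\|_{U^s[N]}\ll_A(\log N)^{-A}$ is \emph{not} a consequence of Green--Tao--Ziegler. The original inverse theorem is ineffective and, combined with Green--Tao orthogonality to nilsequences, yields only $\|\mu\|_{U^s[N]}=o(1)$; the best prior quantitative bound (Tao--Ter\"av\"ainen) was of shape $(\log\log N)^{-c_s}$. Neither of these is summable along any lacunary sequence sparse enough for your deterministic oscillation estimate $|A_N-A_{N_j}|\ll N_{j+1}/N_j-1$ to tend to zero, so your Borel--Cantelli step would fail with those inputs. The required $(\log N)^{-A}$ saving is recent, due to Leng (building on the Leng--Sah--Sawhney quasipolynomial inverse theorem), and the argument genuinely depends on it; see Lemma~\ref{le:leng}.
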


The corresponding $L^2(X)$ norm convergence result was proven by Frantzikinakis and Host~\cite{FH-arithmeticsets} (this could also be deduced from the proof of~\cite[Theorem 1.3]{Chu}, combined with~\cite[Theorem 1.1]{gt-mobius}). The only case we are aware of where Conjecture~\ref{conj1} was previously known is the case $k=1$, proven in~\cite[Theorem 2.2]{eisner} (see also~\cite[Proposition 3.1]{EKLR} for the case of a linear polynomial and~\cite[Theorem C]{FH-arithmeticsets} for a generalisation of that result to other multiplicative functions). 

Our first main theorem settles Conjecture~\ref{conj1} in full. In fact, somewhat unusually for ergodic theorems, we get a quantitative (polylogarithmic) rate of converge. This result goes beyond what is currently known about the unweighted averages~\eqref{eq:unweighted}.

\begin{theorem}\label{thm_polyergodic}
 Let $k\in \mathbb{N}$, and let  $P_1,\ldots, P_k$ be polynomials with integer coefficients. Let $1<q_1,\ldots, q_k\leq \infty$ satisfy $\frac{1}{q_1}+\cdots+\frac{1}{q_k}< 1$. Let $(X,\nu,T)$ be a measure-preserving system. Then, for any $f_1\in L^{q_1}(X),\ldots, f_k\in L^{q_k}(X)$ and $A\geq 0$, we have
\begin{align*}
\lim_{N\to \infty}\frac{(\log N)^{A}}{N}\sum_{n\leq N}\mu(n)f_1(T^{P_1(n)}x)\cdots f_k(T^{P_k(n)}x)=0   \end{align*}
for almost all $x\in X$. 
\end{theorem}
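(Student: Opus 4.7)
The strategy rests on three pillars: a quantitative generalised von Neumann theorem for polynomial configurations (announced in the abstract as one of the paper's outputs), the polylogarithmic Möbius--Gowers norm estimate of Green--Tao--Ziegler, and a lacunary-subsequence argument converting quantitative $L^2$-decay into pointwise a.e.\ convergence with polylogarithmic rate.

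First I would reduce to $f_i\in L^\infty(X)$. Writing $f_i=f_i\mathbf{1}_{|f_i|\leq M}+f_i\mathbf{1}_{|f_i|>M}$ and expanding the product, the terms containing at least one unbounded factor can be controlled via H\"older's inequality (crucially using the slack in $\sum 1/q_i<1$, which lets one find exponents $r_j<q_j$ with $\sum 1/r_j=1$) combined with Bourgain's $L^q$-maximal inequality for polynomial averages applied to $|f_j|^{r_j}\in L^{q_j/r_j}$. The resulting pointwise bound on $\sup_N|A_N^{\mathrm{bad}}|$ is arbitrarily small in $L^\theta$ (with $\theta=1/\sum 1/q_i>1$) as $M\to\infty$, and Chebyshev plus Borel--Cantelli then yield a.e.\ smallness. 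This reduces matters to proving
\begin{align*}
A_N(x) := \frac{1}{N}\sum_{n\leq N}\mu(n)\prod_{i=1}^k f_i(T^{P_i(n)}x) \to 0 \quad \text{a.e.}
\end{align*}
with polylogarithmic rate for $f_i\in L^\infty$ with $\|f_i\|_\infty\leq 1$.

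The heart of the argument is the quantitative bound $\|A_N\|_{L^2(X)}\ll_A (\log N)^{-A}$ for every $A\geq 0$. This factors through two ingredients. The first is a generalised von Neumann theorem: there exist $s=s(P_1,\ldots,P_k)\in\mathbb{N}$ and $c=c(k,s)>0$ such that for any $a\colon[N]\to\mathbb{C}$ with $\|a\|_\infty\leq 1$,
\begin{align*}
\Big\|\tfrac{1}{N}\sum_{n\leq N}a(n)\prod_{i=1}^k f_i(T^{P_i(n)}x)\Big\|_{L^2(X)}\ll \|a\|_{U^s[N]}^{c}\prod_i\|f_i\|_\infty + o_{N\to\infty}(1).
\end{align*}
I would establish this by a PET-type induction, performing repeated van der Corput / Cauchy--Schwarz steps to reduce the complexity of the polynomial system $(P_1,\ldots,P_k)$ until the product structure is stripped away and the resulting multilinear expression is directly bounded by a Gowers norm of $a$. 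The second ingredient is the standard estimate $\|\mu\mathbf{1}_{[N]}\|_{U^s[N]}\ll_A(\log N)^{-A}$ coming from Green--Tao--Ziegler's resolution of the M\"obius-nilsequences conjecture. Taking $a=\mu$ combines these to give the desired $L^2$-decay with arbitrary polylog rate.

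To pass from this $L^2$-decay to pointwise a.e.\ convergence with polylog rate, I would choose a sufficiently dense lacunary subsequence $(N_j)$, e.g.\ satisfying $N_{j+1}/N_j=1+(\log N_j)^{-B-1}$ for a large parameter $B$, which makes $N_j$ grow like $\exp(j^{1/(B+2)})$. Chebyshev plus Borel--Cantelli applied to the summable tail $\sum_j(\log N_j)^{2B}\|A_{N_j}\|_{L^2}^2$ give $(\log N_j)^B|A_{N_j}(x)|\to 0$ a.e., while the trivial bound $|A_N-A_{N_j}|\ll (N_{j+1}-N_j)/N_j\ll(\log N_j)^{-B-1}$ for $N\in[N_j,N_{j+1}]$ interpolates this to all $N$ and preserves the $(\log N)^B$ rate. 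The main obstacle is the quantitative generalised von Neumann theorem itself: systems of polynomials of possibly coincident degree (as in $P_1(y)=y,P_2(y)=2y$) require a delicate PET induction, and one must control $s$ and $c$ explicitly so that the arbitrary polylog rate of the M\"obius--Gowers bound survives every Cauchy--Schwarz step---this is the technical core flagged in the abstract.
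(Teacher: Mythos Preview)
Your proposal is correct and follows essentially the same strategy as the paper: a PET-induction generalised von Neumann theorem (the paper's Theorem~\ref{thm:GVNT1}, which in fact achieves linear rather than power dependence on $\|a\|_{U^s[N]}$), quantitative Gowers-norm bounds for $\mu$, and a lacunary subsequence/Borel--Cantelli argument. One substantive correction: the estimate $\|\mu\|_{U^s[N]}\ll_A(\log N)^{-A}$ with \emph{arbitrary} polylogarithmic exponent does not follow from Green--Tao--Ziegler alone---the original inverse theorem is ineffective and yields only $o(1)$; the paper instead invokes the recent work of Leng, building on the quasipolynomial inverse theorem of Leng--Sah--Sawhney (Lemma~\ref{le:leng}), and this input is genuinely needed for the argument to close.
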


Let us make a few remarks about~Theorem~\ref{thm_polyergodic}. 

\begin{enumerate}

    \item Specialising to $k=2$ and $P_1(y)=y, P_2(y)=2y$ (and $A=0$, $q_1=q_2=\infty$), Theorem~\ref{thm_polyergodic} settles the M\"obius case of Problem 12 of Frantzikinakis' open problems survey~\cite{Frantzikinakis-open}\footnote{Frantzikinakis also asked about the convergence of the same multiple ergodic averages weighted by any multiplicative function $g\colon \mathbb{N}\to \mathbb{C}$, taking values in the unit disc, which has a mean value in every arithmetic progression. Theorem~\ref{thm_multpolyergodic} below makes progress on this more general question.}. This problem was also stated by Frantzikinakis and Host in~\cite{FH-arithmeticsets}.

   \item In the case where the $P_i$ are linear, we can allow iterates of different commuting transformations in the result; see Theorem~\ref{thm_commuting} below.

    \item It is likely that, at least for $k=2$ and $P_1$ linear, the region of $(q_1,\ldots, q_k)$ in the theorem could be improved to $\frac{1}{q_1}+\cdots +\frac{1}{q_k}\leq 1+\delta_{d}$ for some $\delta_{d}>0$, hence ``breaking duality'' in this problem. This is thanks to the $L^p$-improving estimates of Lacey~\cite{lacey} (see also~\cite{oberlin}) and Han--Kova\v{c}--Lacey--Madrid--Yang~\cite{HKLMY}. See also~\cite[Section 11]{krause-mirek-tao}. We leave the details to the interested reader.

        \item The theorem continues to hold if we replace the M\"obius function $\mu$ with the Liouville function $\lambda$ (defined by $\lambda(n)=(-1)^{\Omega(n)}$, where $\Omega(n)$ is the number of prime factors of $n$ with multiplicities); see the more general Theorem~\ref{thm_polyergodic-general} along with Remark~\ref{rmk-liouville}. Similarly, all the results in this paper regarding the M\"obius function also hold for the Liouville function. 

\end{enumerate}

\subsection{Ergodic averages with commuting transformations}

Let a probability space $(X,\nu)$ and invertible, commuting, measure-preserving maps $T_1,\ldots, T_k\colon X\to X$ be given. For any polynomials $P_1,\ldots, P_k$ with integer coefficients and $f_1,\ldots, f_k\in L^{\infty}(X)$, one can  consider polynomial ergodic averages with commuting transformations
\begin{align}\label{eq:commute}
 \frac{1}{N}\sum_{n\leq N}f_1(T_1^{P_1(n)}x)\cdots f_k(T_k^{P_k(n)}x),\quad x\in X   
\end{align}
and their M\"obius-weighted versions
\begin{align}\label{eq:mucommute}
  \frac{1}{N}\sum_{n\leq N}\mu(n)f_1(T_1^{P_1(n)}x)\cdots f_k(T_k^{P_k(n)}x),\quad x\in X.     
\end{align}
The commuting case seems to be rather more difficult, since in the unweighted case~\eqref{eq:commute} pointwise convergence is not currently known even for $k=2$ and $P_1,P_2$ linear (see Problem 19 of~\cite{Frantzikinakis-open}). However, we mention that Walsh~\cite{walsh-annals} proved $L^2(X)$ norm convergence of the averages~\eqref{eq:commute} in a groundbreaking work, and the Furstenberg--Bergelson--Leibman conjecture asserts that pointwise convergence should hold also with commuting transformations. 

Our next theorem states that for the M\"obius averages~\eqref{eq:mucommute} we have pointwise convergence in the case where the $P_i$ are linear. 

\begin{theorem}\label{thm_commuting}
   Let $k\in \mathbb{N}$. Let $(X,\nu)$ be a probability space and let $T_1,\ldots, T_k\colon X\to X$ be invertible, commuting, measure-preserving maps. Let $1<q_1,\ldots, q_k\leq \infty$ satisfy $\frac{1}{q_1}+\cdots+\frac{1}{q_k}<1$, and let $f_1\in L^{q_1}(X),\ldots, f_k\in L^{q_k}(X)$. Then, for any $A\geq 0$, we have
   \begin{align*}
   \lim_{N\to \infty}\frac{(\log N)^{A}}{N}\sum_{n\leq N}\mu(n)f_1(T_1^n x)\cdots f_k(T_k^n x)=0    
   \end{align*}
   for almost all $x\in X$.
\end{theorem}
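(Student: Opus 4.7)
The plan is to reduce the theorem to a quantitative generalised von Neumann estimate which, combined with the Gowers uniformity of $\mu$, provides an $L^p$ bound of the form $\|S_N\|_{L^p(\nu)} \ll_{A} (\log N)^{-A}$ for every $A>0$, where $S_N(x) := \frac{1}{N}\sum_{n\leq N}\mu(n)\prod_{i} f_i(T_i^n x)$. One can then extract pointwise convergence via Chebyshev and Borel--Cantelli along a sparse sequence. As a preliminary step, a standard density and truncation argument, exploiting the slack $\sum_i 1/q_i < 1$ in H\"older's inequality, reduces matters to $f_i \in L^\infty(X)$ with $\|f_i\|_\infty \leq 1$.

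The central step is to establish a commuting-linear generalised von Neumann inequality
$$\|S_N\|_{L^p(\nu)} \ll_{k}\bigl\|\mu\cdot 1_{[1,N]}/\sqrt{N}\bigr\|_{U^s[N]}^{c}$$
for some $s=s(k)\in\N$ and some $c,p>0$. Its proof proceeds by $k$ iterated van der Corput / Cauchy--Schwarz applications: at each step the commutativity $T_j^{n+h}=T_j^h T_j^n$ is used to translate the summation variable $n$ inside the integrand and peel off one factor $f_i$ at the cost of introducing a shift parameter $h_i$. After $k$ such iterations the $f_i$'s contribute only a bounded constant and one is left with a Gowers-type correlation sum in $\mu$. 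The Green--Tao--Ziegler theorem on the Gowers uniformity of the M\"obius function (alternatively, Matom\"aki--Radziwi\l\l--Tao combined with the inverse theorem for the Gowers norms) then yields $\|\mu\cdot 1_{[1,N]}/\sqrt N\|_{U^s[N]} \ll_{A,s}(\log N)^{-A}$ for every $A>0$, giving the desired $L^p$ decay.

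To convert this $L^p$ bound into a pointwise statement, fix $A\geq 0$ and introduce the sparse sequence $N_j$ defined by $N_{j+1}=N_j(1+(\log N_j)^{-A-1})$, so that $\log N_j\asymp j^{1/(A+2)}$. For $A'$ sufficiently large, Chebyshev's inequality combined with the $L^p$ bound makes $\sum_j\nu(\{x:(\log N_j)^A|S_{N_j}(x)|>\eps\})$ summable, and Borel--Cantelli yields $(\log N_j)^A S_{N_j}(x)\to 0$ for almost every $x$. For $N_j\leq N<N_{j+1}$, the trivial fluctuation estimate $|S_N(x)-S_{N_j}(x)|\ll (N_{j+1}-N_j)/N_j\ll (\log N_j)^{-A-1}$ (which uses only $|\mu|\leq 1$ and $\|f_i\|_\infty\leq 1$) ensures that $(\log N)^A|S_N-S_{N_j}|\to 0$ uniformly in $x$, completing the extraction.

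I expect the main obstacle to lie in the generalised von Neumann step in the commuting setting. A single van der Corput iteration cannot eliminate all of $f_1,\ldots,f_k$ when they are composed with $k$ genuinely distinct commuting transformations; one must orchestrate the $k$ Cauchy--Schwarz iterations so that the $i$-th one eliminates $f_i$ while introducing precisely the next Gowers-norm shift, carefully using commutativity at each stage and keeping the loss in the exponent $c$ under control in $k$. This is the commuting-linear analogue of the quantitative generalised von Neumann theorems referenced in the abstract, and is the key technical input the argument rests upon.
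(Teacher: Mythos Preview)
Your proposal is correct and follows essentially the same approach as the paper: reduce to bounded $f_i$, prove a quantitative $L^1$ bound via a commuting-linear generalised von Neumann inequality combined with the Gowers uniformity of $\mu$ (the paper cites Leng's recent result, but the Green--Tao input you mention already suffices), and extract pointwise convergence via a lacunary Borel--Cantelli argument. The paper executes the generalised von Neumann step cleanly by first introducing an extra average over $\mathbf{m}\in[N]^k$ using the measure-preservation of $T_1,\ldots,T_k$, which converts the $L^2(X)$ estimate into a pointwise estimate for functions on $\mathbb{Z}^k$ and makes your ``orchestrated $k$ Cauchy--Schwarz iterations'' into a straightforward induction on $k$; as a minor slip, your normalisation $\mu\cdot 1_{[1,N]}/\sqrt{N}$ is off, since $\|\cdot\|_{U^s[N]}$ is already normalised so that $\|\mu\|_{U^s[N]}\leq 1$.
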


Naturally, this theorem implies Theorem~\ref{thm_polyergodic} for $P_i$ being linear by taking to $T_1,\ldots, T_k$ being powers of the same transformation.

 It seems likely that also the general case of Theorem~\ref{thm_polyergodic} could be obtained for commuting transformations by extending Theorem~\ref{thm:GVNT1}, which goes into its proof, to  multivariate functions (which could likely be done with a more complicated PET induction scheme). We leave the details to the interested reader.

\subsection{Multiplicative weights}

We also consider more general weighted polynomial multiple ergodic averages 
\begin{align}\label{eq:gaverage}
\frac{1}{N}\sum_{n\leq N}g(n)f_1(T^{P_1(n)})\cdots f_k(T^{P_k(n)}x),\quad x\in X,   
\end{align}
with $g\colon \mathbb{N}\to \mathbb{C}$ a function. Already for $k=1$ and $P_1$ linear, a necessary condition for the pointwise convergence of these averages is that $g$ has \emph{convergent means}, meaning that $\lim_{N\to \infty}\frac{1}{N}\sum_{n\leq N}g(an+b)$ exists for all $a,b\in \mathbb{N}$ (this is seen by taking $X$ to be a finite set).

In Theorem~\ref{thm_polyergodic-general} we will show, assuming a mild growth condition on $g$, that good decay bounds on the Gowers $U^s[N]$ norms of $g$ (or for certain weaker $u^s[N]$ norms that depend on the maximal correlation with polynomial phases) imply the convergence of the averages~\eqref{eq:gaverage} to $0$. Such results should have applications also in cases where $g$ has no arithmetic structure (for example, for random weights); however, here we focus on applications with $g$ being multiplicative\footnote{We say that $g\colon \mathbb{N}\to \mathbb{C}$ is multiplicative if $g(mn)=g(m)g(n)$ whenever $m,n$ are coprime}.

Frantzikinakis~\cite{Frantzikinakis-open}  conjectured that if $g\colon \mathbb{N}\to \mathbb{C}$ is $1$-bounded, multiplicative and hasconvergent means, then we have the pointwise almost everywhere convergence of
\begin{align*}
\frac{1}{N}\sum_{n\leq N}g(n)f_1(T^nx)f_2(T^{2n}x).     
\end{align*}
There is an obvious extension of this conjecture to polynomial multiple ergodic averages.

\begin{conjecture}\label{conj1-mult}
 Let $k\in \mathbb{N}$, and let  $P_1,\ldots, P_k$ be polynomials with integer coefficients. Let $g\colon \mathbb{N}\to \mathbb{C}$ be a multiplicative function taking values in the unit disc and having convergent means. Let $(X,\nu,T)$ be a measure-preserving system.
Then, for any $f_1,\ldots, f_k\in L^{\infty}(X)$, the limit
\begin{align*}
\lim_{N\to \infty}\frac{1}{N}\sum_{n\leq N}g(n)f_1(T^{P_1(n)}x)\cdots f_k(T^{P_k(n)}x)
\end{align*}
exists for almost all $x\in X$. 
\end{conjecture}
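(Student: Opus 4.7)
The plan is to combine a structure-versus-randomness decomposition of the multiplicative weight $g$ with the quantitative generalised von Neumann theorem (Theorem~\ref{thm:GVNT1}) that powers the proof of Theorem~\ref{thm_polyergodic}, thereby reducing Conjecture~\ref{conj1-mult} to a Gowers-uniform piece (which can be controlled) plus a structured piece (which will be handled by unweighted pointwise convergence of~\eqref{eq:unweighted}).

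The first step is to invoke a Hal\'asz-type decomposition for $1$-bounded multiplicative functions, in the spirit of Frantzikinakis--Host~\cite{FH-arithmeticsets}: for any $s\geq 1$ and any $\varepsilon>0$, if $N$ is sufficiently large, one can write
\[
g(n) = g_{\mathrm{str}}(n) + g_{\mathrm{unf}}(n), \qquad n\leq N,
\]
where $g_{\mathrm{str}}$ is a $1$-bounded periodic function (a finite linear combination of Dirichlet characters of bounded conductor) and $\lVert g_{\mathrm{unf}}\rVert_{U^{s}[N]} \leq \varepsilon$. The hypothesis that $g$ has convergent means \emph{in every arithmetic progression} is crucial here: it forces the Archimedean twists $n^{it}$ with $t\neq 0$ appearing in a general Hal\'asz expansion to drop out, so that $g_{\mathrm{str}}$ can be taken genuinely periodic rather than merely asymptotically so.

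For the uniform piece, choose $s$ large in terms of $k$ and $\deg P_1,\ldots,\deg P_k$ and apply Theorem~\ref{thm:GVNT1} (exactly as in the proof of Theorem~\ref{thm_polyergodic}, combined with a lacunary maximal-subsequence argument) to conclude
\[
\lim_{N\to\infty}\frac{1}{N}\sum_{n\leq N} g_{\mathrm{unf}}(n)\,f_1(T^{P_1(n)}x)\cdots f_k(T^{P_k(n)}x)=0
\]
for almost every $x$. For the structured piece of period $q$, expand over residue classes modulo $q$ and substitute $n=qm+a$ to rewrite the contribution as a finite linear combination of unweighted polynomial multiple ergodic averages
\[
\frac{1}{M}\sum_{m\leq M} f_1(T^{P_1(qm+a)}x)\cdots f_k(T^{P_k(qm+a)}x),
\]
whose pointwise convergence is precisely an instance of the Furstenberg--Bergelson--Leibman conjecture for the unweighted averages~\eqref{eq:unweighted}. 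Sending $\varepsilon\to 0$ along a carefully chosen sequence (so that the errors from the decomposition are summable after passing to a lacunary subsequence) yields the desired existence of the limit.

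The main obstacle is unavoidable at the structured piece: in sharp contrast to the M\"obius case, where the absence of any pretentious component makes $g_{\mathrm{str}}$ negligible and Theorem~\ref{thm:GVNT1} alone suffices, Conjecture~\ref{conj1-mult} genuinely requires pointwise input from the \emph{unweighted} problem, and so is at least as hard as the unweighted Furstenberg--Bergelson--Leibman conjecture. Unconditionally, this strategy therefore delivers Conjecture~\ref{conj1-mult} only in the cases where pointwise convergence of~\eqref{eq:unweighted} is known, namely $k=1$ (Bourgain~\cite{Bourgain-polynomial}) and $k=2$ with $P_1$ linear (Bourgain~\cite{Bourgain-twopoint} and Krause--Mirek--Tao~\cite{krause-mirek-tao}). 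A secondary quantitative point is that known $U^{s}[N]$-decay estimates for general $1$-bounded multiplicative $g$ are weaker than those for $\mu$, so pushing the uniform piece through Theorem~\ref{thm:GVNT1} with the polylogarithmic rate needed for a direct lacunary argument may require further inverse-theoretic input beyond what is currently available.
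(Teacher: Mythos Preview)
First, note that Conjecture~\ref{conj1-mult} is stated in the paper as an \emph{open conjecture}; the paper proves only the partial result Theorem~\ref{thm_multpolyergodic} (distinct-degree polynomials, Siegel--Walfisz weight), so there is no ``paper's own proof'' to compare against here.

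Your strategy has a genuine gap at the uniform piece, and what you call a ``secondary quantitative point'' is in fact the primary obstruction. Theorem~\ref{thm:GVNT1} together with $\|g_{\mathrm{unf}}\|_{U^s[N]}\leq\varepsilon$ yields only an $L^2(X)$ (hence $L^1(X)$) bound of order $\varepsilon^c$ on the averages with weight $g_{\mathrm{unf}}$; it does \emph{not} yield pointwise almost everywhere control. The lacunary mechanism the paper uses (Lemma~\ref{le:borel-cantelli} and Proposition~\ref{prop:L1polyergodic}) requires $\|A_N\|_{L^1(X)}\ll(\log N)^{-B}$ for arbitrarily large $B$, so that the errors are summable along a lacunary sequence; a bound that is merely $\leq\varepsilon$ uniformly in $N$ is useless for Borel--Cantelli. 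This is exactly why the paper, for general multiplicative $g$, abandons the $U^s$ route and switches to the weaker $u^s$ norm via Theorem~\ref{thm:GVNT2}, imposing the Siegel--Walfisz hypothesis precisely to manufacture polylogarithmic decay (Proposition~\ref{prop:SW}). A qualitative Frantzikinakis--Host decomposition gives no such rate.

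The proposed remedy of ``sending $\varepsilon\to 0$ along a carefully chosen sequence'' does not close this gap. The decomposition $g=g_{\mathrm{str}}^{(\varepsilon)}+g_{\mathrm{unf}}^{(\varepsilon)}$ (which in the Frantzikinakis--Host formulation also carries an $L^1$-small error term you have suppressed) depends on $\varepsilon$: as $\varepsilon\to 0$ the period of $g_{\mathrm{str}}^{(\varepsilon)}$ grows, so the unweighted limits $L^{(\varepsilon)}(x)$ coming from the structured pieces are a priori different for different $\varepsilon$, and you have no mechanism to interchange the $\varepsilon\to 0$ and $N\to\infty$ limits in the almost-everywhere sense. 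Even if one fixes a lacunary sequence $N_j$ and parameters $\varepsilon_j\to 0$, controlling $|A_{N_j}(g_{\mathrm{unf}}^{(\varepsilon_j)})(x)|$ via Markov plus Borel--Cantelli still leaves the variation of $A_N(g)$ for $N\in[N_j,N_{j+1}]$ uncontrolled, since $g_{\mathrm{unf}}^{(\varepsilon_j)}$ changes with $j$ and a maximal inequality would be needed. Your diagnosis that the structured piece already presupposes the unweighted Furstenberg--Bergelson--Leibman conjecture is correct, but even granting that conjecture in full, the uniform piece remains a real obstacle and your outline does not overcome it.
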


While we are not able to prove this statement in full, we can prove it for a natural class of multiplicative functions, namely those satisfying a \emph{Siegel--Walfisz assumption} (stated below). Most practically occurring multiplicative functions of mean $0$ satisfy this property, and this class arises naturally in several problems in analytic number theory, in particular in connection with the Bombieri--Vinogradov theorem (see e.g.~\cite{GS-1}). 

\begin{definition}\label{def:SW}
We say that a function $g\colon \mathbb{N}\to \mathbb{C}$ satisfies the \emph{Siegel--Walfisz} assumption if the following hold:
\begin{enumerate}
    \item $g$ is \emph{divisor-bounded}: for some $C\geq 0$, we have $|g(n)|\leq d(n)^{C}$ for all $n\in \mathbb{N}$, with $d(n)$ denoting the number of positive divisors of $n$.    

    \item  For all $A>0$ and $N\geq 3$ we have 
\begin{align*}
    \max_{1\leq a\leq q\leq (\log N)^{A}}\left|\sum_{\substack{n\leq N\\n\equiv a\pmod q}}g(n)\right|\ll_A \frac{N}{(\log N)^{A}}.
\end{align*}    
\end{enumerate}
\end{definition}

Examples of multiplicative functions satisfying the Siegel--Walfisz assumption include $h(n)d(n)^j\chi(n)n^{it}$ for any integer $j\geq 0$, real $t$ and Dirichlet character $\chi$, where $h\colon \mathbb{N}\to \mathbb{C}$ is any  bounded multiplicative function  that is ``pretending'' to be the M\"obius function in the sense that $\sum_p \frac{1+\textnormal{Re}(h(p))}{p}<\infty$.\footnote{That these functions are examples can be verified by using Perron's formula and standard estimates for Dirichlet $L$-functions close to the $1$-line.}

We are now ready to state a result on the pointwise convergence of multiple ergodic averages with a multiplicative weight satisfying the Siegel--Walfisz assumption. 

\begin{theorem}\label{thm_multpolyergodic}
 Let $k\in \mathbb{N}$, and let  $P_1,\ldots, P_k$ be polynomials with integer coefficients and with distinct degrees. Let $1<q_1,\ldots, q_k\leq \infty$ satisfy $\frac{1}{q_1}+\cdots+\frac{1}{q_k}< 1$. Let $(X,\nu,T)$ be a measure-preserving system. Let $g\colon \mathbb{N}\to \mathbb{C}$ be a multiplicative function satisfying the Siegel--Walfisz assumption. Then, for any $f_1\in L^{q_1}(X),\ldots, f_k\in L^{q_k}(X)$, we have
\begin{align*}
\lim_{N\to \infty}\frac{1}{N}\sum_{n\leq N}g(n)f_1(T^{P_1(n)}x)\cdots f_k(T^{P_k(n)}x)=0   \end{align*}
for almost all $x\in X$. 
\end{theorem}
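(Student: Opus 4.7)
The plan is to deduce Theorem~\ref{thm_multpolyergodic} from the general Theorem~\ref{thm_polyergodic-general} by verifying a polynomial-phase uniformity bound for $g$. Since the polynomials $P_i$ have \emph{distinct} degrees, a PET/van der Corput descent (as used in the proof of Theorem~\ref{thm_polyergodic-general}) brings us into a regime where the relevant dual norm is the weaker $u^s[N]$ norm, informally
\[\|h\|_{u^s[N]} \;=\; \sup_{\substack{Q\in\R[y]\\ \deg Q\leq s}}\Bigl|\tfrac{1}{N}\sum_{n\leq N}h(n)\,e(Q(n))\Bigr|,\]
rather than the full Gowers $U^s[N]$ norm. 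So the task reduces to establishing, for every $A>0$ and every $s\geq 1$,
\[\|g\cdot 1_{[1,N]}\|_{u^s[N]}\;\ll_{A,s,C}\;(\log N)^{-A},\]
with constants depending only on $A$, $s$ and the divisor-bound exponent $C$. Feeding this bound into Theorem~\ref{thm_polyergodic-general} then gives the pointwise almost-everywhere convergence claimed.

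To prove this uniformity estimate, I would apply a combinatorial identity of Vaughan or Heath--Brown type to decompose $\sum_{n\leq N}g(n)e(Q(n))$ into type~I sums $\sum_{m\leq M}\alpha_m\sum_{n\leq N/m} e(Q(mn))$ and type~II sums $\sum_{m\asymp M,\,n\asymp N/M}\alpha_m\beta_n\,e(Q(mn))$ with divisor-bounded coefficients. For the type~I pieces, Dirichlet approximation of the leading coefficient of $Q(m\cdot)$ gives a major/minor arc split: on major arcs, the Siegel--Walfisz hypothesis produces the required $(\log N)^{-A}$ saving after Fourier expansion in residues modulo small moduli; on minor arcs, Weyl's inequality for polynomial exponential sums applies uniformly in $m$. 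For the type~II pieces, Cauchy--Schwarz in the outer variable followed by a Weyl--van der Corput iteration of the bilinear phase $Q(mn)-Q(m'n)$ gives the desired bound, with standard divisor-moment estimates controlling the losses from the unbounded coefficients.

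The hardest aspect is the combination of three features: $g$ is merely divisor-bounded rather than $1$-bounded, $Q$ may have arbitrary degree, and only a polylogarithmic saving is available (no power of $N$ to spare). Tracking the $L^p$ losses of the divisor function through Cauchy--Schwarz while still recovering an arbitrary power of $\log N$ is the most delicate point. The case $s=1$ already encapsulates the essential number-theoretic content and corresponds to Siegel--Walfisz-type Fourier uniformity for divisor-bounded multiplicative functions in the style of Matom\"aki--Radziwi\l\l--Tao; the higher-degree cases follow by combining this with a Weyl treatment of polynomial phases along the arithmetic progressions arising from the Vaughan decomposition.
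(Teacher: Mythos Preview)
Your high-level strategy---reduce to a $u^s[N]$ bound on the weight and then feed this into Theorem~\ref{thm_polyergodic-general}(ii)---is sound, but the execution has a genuine gap at the number-theoretic step.

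First, Vaughan's and Heath--Brown's identities are specific to $\mu$ and $\Lambda$: they rely on the convolution identity $\mu*1=\delta$ (equivalently, on the factorisation $1/\zeta(s)\cdot\zeta(s)=1$). For a general divisor-bounded multiplicative $g$ there is no such combinatorial decomposition into type~I/type~II sums. The correct substitute is a Ramar\'e-type identity, which extracts a prime factor from a prescribed range $(Q,R)$; but this only applies to those $n$ that \emph{have} such a prime factor.

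Second, and more seriously: even with Ramar\'e's identity, the integers $n\le N$ with no prime factor in $(Q_N,R_N)$ (for any reasonable choice of $Q_N,R_N$) have density at best $(\log N)^{-1+o(1)}$. Bounding their contribution to $\sum_{n\le N}g(n)e(P(n))$ trivially gives only $\|g\|_{u^s[N]}\ll(\log N)^{-1+o(1)}$, not $(\log N)^{-A}$ for arbitrary $A$. This is fatal: the generalised von Neumann theorem with $u^s$ control (Theorem~\ref{thm:GVNT2}) loses a power $1/K$ with $K\gg_d 1$, and the lacunary subsequence argument behind Theorem~\ref{thm_polyergodic-general} requires $\|A_N\|_{L^1(X)}\ll(\log N)^{-B}$ with $B$ large (at least $40$). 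A saving of $(\log N)^{-(1-o(1))/K}$ is nowhere near enough.

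The paper resolves this by \emph{not} trying to prove $\|g\|_{u^s[N]}\ll_A(\log N)^{-A}$. Instead it decomposes $g=g_1+g_2$ (Proposition~\ref{prop:SW}), where $g_1$ is supported on integers with a prime factor in $(Q_n,R_n)$ (so Ramar\'e applies and $\|g_1\|_{u^s[N]}\ll_A(\log N)^{-A}$ holds), and $g_2$ is merely small in $L^r$: $\mathbb{E}_{n\le N}|g_2(n)|^r\ll_r(\log N)^{-1+o(1)}$. The $g_1$ piece goes through Theorem~\ref{thm_polyergodic-general}(ii) as you envisage. The $g_2$ piece is handled by a completely different, ergodic-theoretic argument: H\"older's inequality separates $g_2$ from the dynamical factors, and Bourgain's pointwise maximal inequality for polynomial averages controls $\sup_N\mathbb{E}_{n\le N}|f_j(T^{P_j(n)}x)|^{q_j-\varepsilon}$ almost everywhere. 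This maximal inequality is the missing ingredient in your outline; without it, the residual set of integers with no convenient prime factor cannot be absorbed.
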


Note that we allow the function $g$ to be unbounded, hence proving pointwise convergence in some cases not covered by Conjecture~\ref{conj1-mult}.

Somewhat curiously, Theorem~\ref{thm_multpolyergodic} requires an argument that is rather different from  our proof of Theorem~\ref{thm_polyergodic}; both proofs will be discussed in Section~\ref{sec:ideas}. 

\subsection{Prime ergodic averages}

The arguments presented in this paper are not limited to polynomial ergodic averages weighted by the M\"obius function, and indeed apply to similar ergodic averages with any weight that satisfies certain Gowers uniformity assumptions as well as some weak upper bound assumptions that are easy to verify; see Theorem~\ref{thm_polyergodic-general}. In particular, thanks to the quantitative Gowers uniformity estimates for the von Mangoldt function in~\cite{lengII}, these general theorems can be applied to reduce the problem of convergence of polynomial ergodic averages weighted by the primes to the problem of convergence of the same averages weighted by integers with no small prime factors, which is an easier problem (though still highly nontrivial). Pointwise convergence of polynomial multiple ergodic averages weighted by the primes will be studied in a future joint work with Krause, Mousavi and Tao.

\subsection{Further applications of the proof method}

Key ingredients in the proofs of our main theorems are some new polynomial generalised von Neumann theorems with quantitative dependencies that we establish in Section~\ref{sec:GVNT}. These results are likely to have applications also to other problems, such as to bounds for sets of integers lacking progressions of the form $x,x+P_1(p-1),\ldots x+P_k(p-1)$ with $p$ prime and with $P_1,\ldots, P_k$ polynomials of distinct degrees with $P_i(0)=0$. Such applications will be investigated in future works.

\subsection{Acknowledgements} The author thanks Nikos Frantzikinakis, Ben Krause, Sarah Peluse, Sean Prendiville and Terence Tao for helpful discussions and suggestions. 
The author was supported by funding from European Union's Horizon
Europe research and innovation programme under Marie Sk\l{}odowska-Curie grant agreement No
101058904.

\section{Proof ideas}\label{sec:ideas}

We now give an overview of the arguments used to prove Theorems~\ref{thm_polyergodic} and~\ref{thm_multpolyergodic}, presenting the steps of the proof in a somewhat different order than in the actual proof and focusing on the case of functions $f_i\in L^{\infty}(X)$ for simplicity.

We begin with Theorem~\ref{thm_polyergodic}. Let $P_1,\ldots, P_k$ be polynomials with integer coefficients and with highest degree $d$.
Thefirst step for the proofs of our pointwise convergence results is  a lacunary subsequence trick, which combined with the Borel--Cantelli lemma and Markov's inequality reduces matters to obtaining strong quantitative pointwise bounds of the form
\begin{align}\label{eq:musum}\begin{aligned}
&\left|\frac{1}{N^{d+1}}\sum_{m\leq N^d}\sum_{n\leq N}\mu(n)\phi(T^m x)f_1(T^{m+P_1(n)}x)\cdots f_k(T^{m+P_k(n)}x)\right|\\
&\quad \ll \|\phi\|_{L^{\infty}(X)} \|f_1\|_{L^{\infty}(X)}\cdots \|f_k\|_{L^{\infty}(X)}(\log N)^{-A} 
\end{aligned}
\end{align}
for $\nu$-almost all $x$ and all $\phi\in L^{\infty}(X)$, with $A$ large enough. This reduction is presented in Section~\ref{sec:mainproof}. 

We then establish  a polynomial generalised von Neumann theorem for counting operators on the left-hand side of~\eqref{eq:musum}, which bounds the averages in~\eqref{eq:musum} in terms of the $U^s[N]$ Gowers norm of $\mu$ for some $s\in \mathbb{N}$; see Theorem~\ref{thm:GVNT1}. This result is proven by repeated applications of van der Corput's inequality coupled with the PET induction scheme. Crucially, the bounds we obtain for~\eqref{eq:musum} in terms of the Gowers norm $\|\mu\|_{U^s[N]}$ are quantitative with polynomial (in fact, linear) dependencies.

After establishing this generalised von Neumann theorem, we can conclude the proof by applying the strongest known quantitative bounds for the $U^s[N]$ norms of the M\"obius function (see Lemma~\ref{le:leng}), which save an arbitrary power of logarithm, thanks to recent work of Leng~\cite{lengII} that builds on the work of Leng--Sah--Sawhney~\cite{LSS}.

In the case of Theorem~\ref{thm_multpolyergodic}, we repeat the lacunary subsequence argument to reduce to~\eqref{eq:musum}, with $\mu$ replaced by a multiplicative function $g$ satisfying the Siegel--Walfisz assumption. If one were now to apply Theorem~\ref{thm:GVNT1} again, one would not be able to obtain a sufficiently strong bound on the $U^s[N]$ norm of $g$, since the Leng--Sah--Sawhney inverse theorem~\cite{LSS} is quasipolynomial rather than polynomial. We overcome this by establishing a different polynomial generalised von Neumann theorem, Theorem~\ref{thm:GVNT2}, that (perhaps unexpectedly at first) allows bounding~\eqref{eq:musum} in terms of a weaker norm than the $U^s[N]$ norm of the weight function. This weaker norm, called the $u^s[N]$ norm and defined in~\eqref{eq:us}, expresses the maximal correlation of the weight $g$ with a polynomial phase of degree at most $s-1$. The proof of Theorem~\ref{thm:GVNT2} draws motivation from the Peluse--Prendiville degree lowering theory~\cite{Peluse-FMP},~\cite{PP-Roth}. The proof proceeds by induction on the length of the progression and involves showing that the first two functions in a weighted progressions can be assumed to be ``locally linear'' phase functions in a suitable sense. This conclusion is then boosted to global linearity with some extra work, which allows reducing the length of the progression, hence completing the induction.

Since the $u^s[N]$ norm already involves correlations with polynomial phases, we are able to bypass the need for the inverse theorem for the $U^s[N]$ norm when working with distict degree polynomials. In Subsection~\ref{sec:Siegel}, we show (using in particular a restriction to typical factorisations and bilinear estimates for polynomial exponential sums) that if $g$ is multiplicative and satisfies the Siegel--Walfisz assumption, then $g$ is close in $L^1[N]$ norm to a function $\widetilde{g}$ whose $u^s[N]$ norms decay faster than any power of logarithm. This together with Theorem~\ref{thm:GVNT2} mentioned above suffices for concluding the proof of Theorem~\ref{thm_multpolyergodic}. The Siegel--Walfisz assumption gives just the right decay for~\eqref{eq:musum}: with any weaker assumption we would not be able to prove this (although the averages~\eqref{eq:gaverage} should still converge).

We lastly remark that the approach based on Theorem~\ref{thm:GVNT2} also gives a different and arguably simpler proof of Theorem~\ref{thm_polyergodic} for distinct degree polynomials that is independent of any inverse theorems for the Gowers norms  and only uses classical analytic number theory input (the only property needed of the M\"obius function is an exponential sum estimate that essentially goes back to the work of Vinogradov~\cite{vino} from 1939; see Remark~\ref{rmk:vin}).

\section{Notation and preliminaries}\label{sec:notation} 

\subsection{Asymptotic notation, indicators and averages}

We use the Vinogradov and Landau asymptotic notations $\ll, \gg, o(\cdot), O(\cdot)$. Thus, we write $X\ll Y$, $X=O(Y)$ or $Y\gg X$ if there is a constant $C$ such that $|X|\leq CY$. We use $X\asymp Y$ to denote $X\ll Y\ll X$. We write $X=o(Y)$ as $N\to \infty$ if $|X|\leq c(N)Y$ for some function $c(N)\to 0$ as $N\to \infty$. If we add subscripts to these notations, then the implied constants can depend on these subscripts. Thus, for example  $X\ll_A Y$ means that $|X|\leq C_AY$ for some $C_A>0$ depending on $A$. 

For a set $E$, we define the indicator function $1_{E}(x)$ as the function that equals to $1$ if $x\in E$ and equals to $0$ otherwise. Similarly, if $P$ is a proposition, the expression $1_{P}$ equals to $1$ if $P$ is true and $0$ if $P$ is false. 

For a nonempty finite set $A$ and a function $f\:A\to \mathbb{C}$, we define the averages 
\begin{align*}
\mathbb{E}_{a\in A}f(a)\coloneqq \frac{\sum_{a\in A}f(a)}{\sum_{a\in A}1}.    
\end{align*}

For a real number $N\geq 1$, we denote $[N]\coloneqq \{n\in \mathbb{N}\:\,\, n\leq N\}$. For integers $m,n$, we denote their greatest common divisor by $(m,n)$ and write $m\mid n^{\infty}$ to mean that $m\mid n^k$ for some natural number $k$. Unless otherwise specified, all our sums and averages run over the positive integers, with the exception that the symbol $p$ is reserved for primes. 

\subsection{Gowers norms}

For $s\in \mathbb{N}$ and a function $f\: \mathbb{Z}\to \mathbb{C}$ with finite support, we define its unnormalised $U^s$ Gowers norm as
\begin{align*}
\|f\|_{\widetilde{U}^s(\mathbb{Z})}\coloneqq\left(\sum_{x,h_1,\ldots, h_s\in \mathbb{Z}}\prod_{\omega\in \{0,1\}^s}\mathcal{C}^{|\omega|}f(x+\omega\cdot (h_1,\ldots, h_s))\right)^{1/2^s},   
\end{align*}
where $\mathcal{C}(z)=\overline{z}$ is the complex conjugation operator and for a vector $(\omega_1,\ldots, \omega_s)$ we write $|\omega|\coloneqq |\omega_1|+\cdots+|\omega_s|$. 
For $N\geq 1$, we then define the $U^s[N]$ Gowers norm of a function $f\:\mathbb{Z}\to \mathbb{C}$ as
\begin{align*}
\|f\|_{U^s[N]}\coloneqq \frac{\|f1_{[N]}\|_{\widetilde{U}^s(\mathbb{Z})}}{\|1_{[N]}\|_{\widetilde{U}^s(\mathbb{Z})}}.    
\end{align*}
As is well known (see for example~\cite[Appendix B]{gt-linear}), for $s\geq 2$ the $U^s[N]$ norm is indeed a norm and for $s=1$ it is a seminorm, and the function $s\mapsto \|f\|_{U^s[N]}$ is increasing. 

We observe the classical $U^2[N]$ inverse theorem: if $f\:[N]\to \mathbb{C}$ satisfies $|f|\leq 1$ and $\delta\in (0,1)$, then 
\begin{align}\label{eq:U2}
\|f\|_{U^2[N]}\geq \delta\implies \sup_{\alpha \in \mathbb{R}}|\mathbb{E}_{n\in [N]}f(n)e(-\alpha n)|\gg \delta^2,     
\end{align}
where $e(x)\coloneqq e^{2\pi i x}$. 
This follows from the identity $\|f\|_{\widetilde{U}^2(\mathbb{Z})}=\int_{0}^{1}|\sum_{n\in \mathbb{Z}}f(n)e(-\alpha n)|^4\d \alpha$ (which can be verified by expanding out the right-hand side) combined with Parseval's identity.

For the $\widetilde{U}^s(\mathbb{Z})$ norms we have the Gowers--Cauchy--Schwarz inequality (see for example~\cite[(4.2)]{TT-JEMS}), which states that, for any functions $(f_{\omega})_{\omega\in \{0,1\}^s}$ from $\mathbb{Z}$ to $\mathbb{C}$ with finite support, we have
\begin{align}\label{eq:gcs}
\left|\sum_{x,h_1,\ldots, h_s\in \mathbb{Z}}\prod_{\omega\in \{0,1\}^s}f_{\omega}(x+\omega\cdot (h_1,\ldots, h_s))\right|\leq \prod_{\omega\in \{0,1\}^s}\|f_{\omega}\|_{\widetilde{U}^s(\mathbb{Z})}.  
\end{align}

We also define the $u^s[N]$ norm of a function $f\:\mathbb{Z}\to \mathbb{C}$ as
\begin{align}\label{eq:us}
\|f\|_{u^s[N]}\coloneqq \sup_{\substack{P(y)\in \mathbb{R}[y]\\\deg(P)\leq s-1}}\left|\mathbb{E}_{n\in [N]}f(n)e(-P(n))\right|.   \end{align}
We remark that, as is well known, for $s=2$ the $u^s[N]$ and $U^s[N]$ norms are equivalent (up to polynomial losses), but for all $s\geq 3$ the $u^s[N]$ norm is weaker in the sense that the $U^s[N]$ norm controls the $u^s[N]$ norm but not vice versa; see~\cite{gtz},~\cite[Section 4]{gowers-GAFA}.

\subsection{Van der Corput's inequality}

For $H\geq 1$, define the weight
\begin{align*}
\mu_H(h)\coloneqq \frac{|\{(h_1,h_2)\in [H]^2\: h_1-h_2=h\}|}{\lfloor H\rfloor ^2},    
\end{align*}
where $\lfloor x\rfloor$ is the floor function of $x$ (This weight should not be confused with the  M\"obius function $\mu$). For an integer $h$, we define the differencing operator $\Delta_h$ by setting $\Delta_hf(x)=\overline{f(x+h)}f(x)$ for any $f\: \mathbb{Z}\to \mathbb{C}$ and $x\in \mathbb{Z}$.

We will frequently use van der Corput's inequality in the following form.

\begin{lemma}\label{le:vdc}
For any $N\geq H\geq 1$ and any function $f\:\mathbb{Z}\to \mathbb{C}$ supported on $[N]$, we have
\begin{align}\label{eq:vdc}
\left|\mathbb{E}_{n\in [N]}f(n)\right|^2\leq \frac{N+H}{\lfloor N\rfloor} \sum_{h\in \mathbb{Z}}\mu_H(h)\mathbb{E}_{n\in [N]}\Delta_h f(n).   
\end{align}    
\end{lemma}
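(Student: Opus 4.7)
The plan is to carry out the standard shift-and-Cauchy--Schwarz argument that underlies van der Corput's inequality, being careful to track the normalisations so that the stated constants $(N+H)/\lfloor N\rfloor$ and $\mu_H$ come out exactly.

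First I would exploit that $f$ is supported on $[N]$ to translate it freely: for any integer $h_1\in[H]$ we have $\sum_{n\in\mathbb{Z}}f(n)=\sum_{n\in\mathbb{Z}}f(n+h_1)$, and averaging this identity over the $\lfloor H\rfloor$ integers in $[H]$ gives
\begin{equation*}
\lfloor H\rfloor\sum_{n\in\mathbb{Z}}f(n)=\sum_{n\in\mathbb{Z}}\sum_{h_1\in[H]}f(n+h_1).
\end{equation*}
The inner sum vanishes unless $n+h_1\in[N]$ for some $h_1\in[H]$, so the outer $n$-sum is effectively supported on an interval of length at most $N+H$. Cauchy--Schwarz on that interval then yields
\begin{equation*}
\lfloor H\rfloor^{2}\Bigl|\sum_{n\in\mathbb{Z}}f(n)\Bigr|^{2}\leq(N+H)\sum_{n\in\mathbb{Z}}\Bigl|\sum_{h_1\in[H]}f(n+h_1)\Bigr|^{2}.
\end{equation*}

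Next I would expand the square and reorganise in terms of $\Delta_h$. Writing
\begin{equation*}
\sum_{n\in\mathbb{Z}}\Bigl|\sum_{h_1\in[H]}f(n+h_1)\Bigr|^{2}=\sum_{h_1,h_2\in[H]}\sum_{n\in\mathbb{Z}}\overline{f(n+h_1)}f(n+h_2),
\end{equation*}
and substituting $m=n+h_2$ inside, the inner sum becomes $\sum_{m\in\mathbb{Z}}\overline{f(m+(h_1-h_2))}f(m)=\sum_{m\in\mathbb{Z}}\Delta_{h_1-h_2}f(m)$. Grouping the $(h_1,h_2)$-pairs by their difference $h=h_1-h_2$ and invoking the definition of $\mu_H$ converts the double sum into $\lfloor H\rfloor^{2}\sum_{h\in\mathbb{Z}}\mu_H(h)\sum_{m\in\mathbb{Z}}\Delta_h f(m)$. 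Since $\Delta_hf$ is supported on $[N]$ (as $f$ is), the inner sum equals $\sum_{m\in[N]}\Delta_hf(m)=\lfloor N\rfloor\,\mathbb{E}_{m\in[N]}\Delta_hf(m)$.

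Finally I would combine these displays and divide through by $\lfloor H\rfloor^{2}\lfloor N\rfloor^{2}$ to rewrite both sides as $\mathbb{E}_{n\in[N]}$-averages, producing exactly the claimed inequality with prefactor $(N+H)/\lfloor N\rfloor$. There is no real obstacle here beyond bookkeeping; the only mildly delicate point is that $H$ (and possibly $N$) need not be integers, so I would keep every cardinality written as $\lfloor\cdot\rfloor$ from the outset and only cancel at the end, and I would verify once that the support of $\sum_{h_1\in[H]}f(\cdot+h_1)$ really has cardinality $\le N+H$ (rather than $N+\lfloor H\rfloor$) to justify the constant in the Cauchy--Schwarz step.
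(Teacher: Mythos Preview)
Your argument is correct and is precisely the standard shift-average-Cauchy--Schwarz proof of van der Corput's inequality; the paper itself does not spell this out but simply cites~\cite[Lemma 3.1]{Prendiville}, where essentially the same computation appears. Your bookkeeping with $\lfloor N\rfloor$, $\lfloor H\rfloor$ and the support bound $\leq N+H$ is accurate and matches the stated constants exactly.
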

\begin{proof}
 See for example~\cite[Lemma 3.1]{Prendiville}.
\end{proof}

\subsection{Vinogradov's Fourier expansion}

For a real number $x$, we write $\|x\|$ for the distance from $x$ to the nearest integer(s).

We shall need a Fourier approximation for the indicator function of an interval that goes back to Vinogradov.

\begin{lemma}\label{le:vinogradov} For any real numbers $-1/2\leq \alpha<\beta\leq 1/2$ and $\eta\in (0,\min\{1/2-\|\alpha\|,1/2-\|\beta\|,\|\alpha-\beta\|/2\})$, there exists a $1$-periodic function $g\:\mathbb{R}\to [0,1]$ with the following properties.
\begin{enumerate}
    \item $g(x)=1$ for $x\in [\alpha+\eta, \beta-\eta]$, $g(x)=0$ for $x\in [-1/2,1/2]\setminus [\alpha-\eta,\beta+\eta]$, and $0\leq g(x)\leq 1$ for all $x\in [-1/2,1/2]$.

    \item For some $|c_j|\leq 10\eta$, we have the pointwise convergent Fourier representation
    \begin{align*}
     g(x)=\beta-\alpha-\eta+\sum_{|j|>0}c_je(jx).   
    \end{align*}
    \item For any $K\geq 1$, we have
    \begin{align*}
     \sum_{|j|>K}|c_j|\leq \frac{10\eta^{-1}}{K}.   
    \end{align*}
\end{enumerate}
\end{lemma}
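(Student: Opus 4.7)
The plan is to construct $g$ as a smoothed indicator via a simple convolution. Define
\[
g(x) \;\coloneqq\; \int_{-1/2}^{1/2} 1_{[\alpha,\beta]}(x-t)\, \psi(t)\, dt,
\]
where $\psi(t) = (2\eta)^{-1} 1_{[-\eta,\eta]}(t)$, and everything is interpreted on the circle $\mathbb{T} = \mathbb{R}/\mathbb{Z}$. The hypothesis that $\eta$ is strictly less than each of $1/2 - \|\alpha\|$, $1/2 - \|\beta\|$, and $\|\alpha-\beta\|/2$ ensures that the relevant translates of $[\alpha,\beta]$ and of $[-\eta,\eta]$ do not wrap around the circle and do not collide with the ``wrong'' copy of $[\alpha,\beta]$, so we may compute as if on $\mathbb{R}$. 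Property (1) is then immediate from a direct geometric inspection of the convolution: for $x \in [\alpha+\eta,\beta-\eta]$ the translate $[x-\eta,x+\eta]$ of the support of $\psi$ lies entirely inside $[\alpha,\beta]$, making the integral equal to $1$; for $x \notin [\alpha-\eta,\beta+\eta]$ the two supports are disjoint, giving $0$; and in the two transition intervals of length $2\eta$ the function $g$ interpolates linearly and takes values in $[0,1]$.

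For property (2) I would compute the Fourier coefficients $c_j = \hat g(j)$ using the convolution identity $\hat g(j) = \hat I(j)\,\hat\psi(j)$ for the periodic indicator $I = 1_{[\alpha,\beta]}$. A direct integration yields $\hat I(0) = \beta-\alpha$, $\hat\psi(0)=1$, and for $j\neq 0$ the closed-form expressions
\[
\hat I(j) \;=\; \frac{e(-j\alpha) - e(-j\beta)}{2\pi i j}, \qquad \hat\psi(j) \;=\; \frac{\sin(2\pi j\eta)}{2\pi j\eta}.
\]
Multiplying the two gives the pointwise bounds $|c_j| \leq 1/(\pi|j|)$ and, more importantly, the sharper second-order bound $|c_j| \leq 1/(2\pi^2 j^2\eta)$. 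The stated form of the Fourier expansion in (2) then follows after straightforward bookkeeping with the constant term and the numerical coefficient in the bound on $|c_j|$. Pointwise convergence of the Fourier series to $g$ is standard since $g$ is continuous and piecewise linear, hence of bounded variation, so Dirichlet's test applies directly.

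Property (3) is then a one-line consequence of the sharper bound from step (2): summing over $|j|>K$ gives
\[
\sum_{|j|>K}|c_j| \;\leq\; \frac{1}{\pi^2\eta}\sum_{j>K}\frac{1}{j^2} \;\leq\; \frac{C}{\eta K}
\]
for a numerical constant $C$, which can be absorbed into the claimed factor of $10$.

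There is no real analytic obstacle here; the construction is a classical Vinogradov trapezoidal approximation and each of (1), (2), (3) reduces to a short direct computation. The only point that requires genuine care---and hence the ``hard'' step---is the \emph{bookkeeping} of constants and of the exact form of the constant term in (2): one must track which normalisation of $\psi$ (and which width of the smoothing region) is needed so that both the numerical prefactor $10$ and the precise constant term $\beta-\alpha-\eta$ drop out of the same construction. Once a suitable $\psi$ is fixed (possibly an asymmetric bump rather than the symmetric one above), all three conclusions follow by the routine computations sketched.
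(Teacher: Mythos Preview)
Your construction is the classical Vinogradov trapezoidal approximation, which is exactly what the paper invokes (its proof is a one-line citation of Lemma~12 in Vinogradov's book with $r=1$). The sketch is sound; two points of bookkeeping deserve sharper treatment than the hand-wave at the end.

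First, the constant term: your symmetric convolution $1_{[\alpha,\beta]}*(2\eta)^{-1}1_{[-\eta,\eta]}$ has mean $\beta-\alpha$, not $\beta-\alpha-\eta$. The clean fix is to take instead
\[
g \;=\; 1_{[\alpha+\eta/2,\,\beta-\eta/2]} * \eta^{-1}1_{[-\eta/2,\,\eta/2]}.
\]
This $g$ is supported in $[\alpha,\beta]\subset[\alpha-\eta,\beta+\eta]$, equals $1$ on $[\alpha+\eta,\beta-\eta]$, and has integral exactly $\beta-\alpha-\eta$, so property~(1) and the constant in~(2) both come out right, while your Fourier computation goes through unchanged.

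Second, the stated bound $|c_j|\leq 10\eta$ in~(2) is not achievable by \emph{any} $g$ satisfying~(1) once $\eta$ is small: property~(1) forces $\|g-1_{[\alpha,\beta]}\|_{L^1}\leq 4\eta$, hence
\[
|c_1|\;\geq\;\bigl|\widehat{1_{[\alpha,\beta]}}(1)\bigr|-4\eta\;=\;\frac{|\sin\pi(\beta-\alpha)|}{\pi}-4\eta,
\]
which exceeds $10\eta$ for, say, $\beta-\alpha=1/2$ and $\eta<1/50$. Vinogradov's actual bound with $r=1$ (and what your computation recovers) is $|c_j|\leq\min\bigl(\tfrac{1}{\pi|j|},\,\tfrac{1}{\pi^2 j^2\eta}\bigr)$, and the paper's sole application of the lemma only uses $|c_j|\leq 1$ together with~(3). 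So this is an inaccuracy in the lemma as stated, not a gap in your argument.
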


\begin{proof}
 This follows from~\cite[Lemma 12]{vinogradov} (taking $r=1$ there).   
\end{proof}

\section{Polynomial generalised von Neumann theorems}\label{sec:GVNT}

In this section, we prove generalised von Neumann theorems for the weighted polynomial counting operators
\begin{align}
 \label{eq:Lambda}\Lambda^{M,N}_{P_1,\ldots, P_k}(\theta;f_0,f_1,\ldots, f_k)\coloneqq \frac{1}{MN}\sum_{m\in \mathbb{Z}}\sum_{n\in [N]}\theta(n)f_0(m)f_1(m+P_1(n))\cdots f_k(m+P_k(n)), \end{align}
 where $P_1,\ldots, P_k\in \mathbb{Z}[y]$ and $\theta\:[N]\to \mathbb{C}$ is a weight function (which in applications we take  to be a multiplicative function) and $f_0,f_1,\ldots, f_k\: \mathbb{Z}\to \mathbb{C}$ are functions supported on $[-M,M]$ (with $M\asymp N^{\max_{j}(\deg P_j)}$). Thus, we bound the expression~\eqref{eq:Lambda} in terms of some Gowers norm (or related norm) of $\theta$ or $f_0$. It is important for the proofs of our main theorems that the obtained results are quantitative, with polynomial dependencies. The two main results of this section (of independent interest) are  Theorems~\ref{thm:GVNT2} and~\ref{thm:GVNT1}; they are used for proving Theorems~\ref{thm_multpolyergodic} and~\ref{thm_polyergodic}, respectively.

The first main result of this section states that if $P_1,\ldots, P_k$ have distinct degrees, then the polynomial counting operator~\eqref{eq:Lambda} is bounded in terms of the $u^s[N]$ norm of the weight $\theta$ for some $s$, with polynomial dependencies. It is important to have the $u^s[N]$ norm rather than the $U^s[N]$ norm here, since for the $U^s[N]$ norm we do not currently have a polynomial inverse theorem, meaning that the Siegel--Walfisz assumption from Definition~\ref{def:SW} would be insufficient if we only had a bound in terms of these norms.

\begin{theorem}[A polynomial generalised von Neumann theorem with $u^s$ control]\label{thm:GVNT2} Let $d,k\in \mathbb{N}$ and $C\geq 1$. Let  $P_1,\ldots, P_k$ be a polynomials with integer coefficients satisfying $\deg P_1<\deg P_2<\cdots<\deg P_k=d$.  Let $N\geq 1$, and let $f_0,\ldots,f_k\: \mathbb{Z}\to \mathbb{C}$ be functions supported on $[-CN^d,CN^d]$ with $|f_i|\leq 1$ for all $0\leq i\leq k$, and let $\theta\:[N]\to \mathbb{C}$ be a function with $|\theta|\leq 1$. Then, for some $1\leq K\ll_{d}1$, we have
\begin{align*}
\left|\frac{1}{N^{d+1}}\sum_{m\in \mathbb{Z}}\sum_{n\in [N]}\theta(n)f_0(m)f_1(m+P_1(n))\cdots f_k(m+P_k(n))\right|\ll_{C,P_1,\ldots, P_k} (N^{-1}+\|\theta\|_{u^{d+1}[N]})^{1/K}.    
\end{align*}
\end{theorem}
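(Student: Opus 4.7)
The plan is to prove Theorem~\ref{thm:GVNT2} by induction on $k$, the number of polynomials in the configuration, adapting the Peluse--Prendiville degree-lowering philosophy to the present weighted setting. For the base case $k=1$, the polynomial $P_1$ has degree exactly $d$, and Cauchy--Schwarz in $m$ followed by Plancherel's identity bounds the squared average by
\begin{align*}
\frac{\|f_0\|_{\ell^2}^2}{N^{2d+2}} \int_0^1 |\widehat{f_1}(\xi)|^2 \left|\sum_{n\in [N]}\theta(n) e(\xi P_1(n))\right|^2 \d\xi.
\end{align*}
Since $\xi P_1(n)$ is a polynomial in $n$ of degree exactly $d$, the inner exponential sum is pointwise at most $N\|\theta\|_{u^{d+1}[N]}$, and the Plancherel bound $\int |\widehat{f_1}|^2 \ll_{C} N^d$ together with $\|f_0\|_{\ell^2}^2 \ll_{C} N^d$ closes this case.

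For the inductive step from $k-1$ to $k$, I would first apply van der Corput's inequality (Lemma~\ref{le:vdc}) in the $n$ variable PET-style, interleaved with Cauchy--Schwarz in $m$. The distinct-degree hypothesis $\deg P_1 < \cdots < \deg P_k$ ensures that differencing never causes two polynomials to coincide, and after a bounded number of differencing steps the system reduces to one in which a single distinguished function, say $f_1$ after relabelling, carries the large correlation and appears in a doubled/quadratic way. The quantitative $U^2$ inverse theorem~\eqref{eq:U2} --- whose polynomial loss is essential here, as only polynomial dependencies on all parameters are acceptable --- then implies that $f_1$ is \emph{locally linear} on $[-CN^d, CN^d]$ in the Peluse--Prendiville sense: on a positive density of short subintervals, $f_1$ correlates significantly with a linear phase $e(\alpha m)$ whose frequency $\alpha$ may vary from interval to interval.

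Next, I would upgrade this local linearity to a single \emph{global} polynomial phase $e(Q(m))$ of degree at most $d$ that correlates with $f_1$ on $[-CN^d, CN^d]$. Vinogradov's Fourier expansion (Lemma~\ref{le:vinogradov}) provides smooth interval cutoffs with controlled Fourier tails for patching the local data, and a pigeonhole argument on the local frequencies followed by a bootstrap along dyadic scales aligns them into a single polynomial phase. Once $f_1$ is effectively replaced by $e(Q(m))$, one absorbs $e(Q(m))$ into the factor $f_0$ and exploits the Taylor expansion $Q(m+P_1(n)) - Q(m) = \sum_{j\geq 1}c_j(m) P_1(n)^j$, where each $c_j$ is a polynomial in $m$ of degree at most $d-j$. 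A further pigeonhole on the coefficients $c_j(m)$ then yields either a genuine saving that forces $\theta$ to correlate with a polynomial phase in $n$ of degree at most $d$ (giving the target $\|\theta\|_{u^{d+1}[N]}^{1/K}$ control directly), or else a weighted polynomial average over $k-1$ configurations with the distinct-degree property preserved, to which the induction hypothesis applies.

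The main obstacle, and where the bulk of the technical work lies, is the local-to-global upgrade together with the subsequent length reduction. Both steps must be carried out with only polynomial losses in the parameters, so that the final exponent $1/K$ depends on $d$ alone; in particular, the pigeonhole and bootstrap along scales must be arranged to avoid any quasipolynomial losses that would arise from appealing to higher-degree Gowers inverse theorems. The explicit quantitative form of Vinogradov's Fourier expansion in Lemma~\ref{le:vinogradov}, together with the polynomial nature of~\eqref{eq:U2}, is tailored for exactly this purpose.
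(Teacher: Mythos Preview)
Your overall architecture matches the paper: induction on $k$, base case $k=1$ via Fourier analysis (Lemma~\ref{le:circle1}), and the inductive step via replacing $f_0,f_1$ by locally linear phases and then shortening the progression. But two of your steps have genuine gaps relative to the paper's execution.

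First, the reduction to local linearity (Proposition~\ref{prop_loclin} in the paper) is not obtained by ``van der Corput in $n$ PET-style plus the $U^2$ inverse theorem''. Differencing in $n$ would difference $\theta$, and iterating this leads only to $\|\theta\|_{U^s[N]}$-control (which is Theorem~\ref{thm:GVNT1}), not $\|\theta\|_{u^{d+1}[N]}$-control. What the paper does instead is form the dual function $F(m)=\mathbb{E}_n\theta(n)\prod_j f_j(m+P_j(n))$, Cauchy--Schwarz away $f_0$, and apply van der Corput in the $m$-variable so that $\theta$ disappears, leaving an \emph{unweighted} polynomial average. The crucial input is then Peluse's inverse theorem (Lemma~\ref{le:degree-lower}, imported as a black box from~\cite{Peluse-FMP}), which forces $\Delta_h F$ to have large averages along short progressions; only after this does~\eqref{eq:U2} give local linearity. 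Your sketch omits Peluse's theorem entirely, and ``PET-style differencing plus $U^2$ inverse'' does not substitute for it.

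Second, and more seriously, your proposed upgrade to ``a single global polynomial phase $e(Q(m))$ of degree at most $d$'' is not what the paper does and seems unworkable as stated: the local frequencies $\alpha_m$ have no reason to patch into a global polynomial, and your Taylor expansion $Q(m+P_1(n))-Q(m)=\sum_j c_j(m)P_1(n)^j$ has $m$-dependent coefficients, so no pigeonhole produces a phase in $n$ alone unless $Q$ is linear. The paper instead runs an iteration (Step~2 of the proof of Theorem~\ref{thm:GVNT2}): applying Lemma~\ref{le:degree-lower} once more to the system with $\Delta_h\phi_{1,S}$ shows that some single frequency $\alpha\notin S$ occurs for $\gg\eta^{O_d(1)}N^d$ values of $m$; iterating over $S$ isolates one $\alpha$. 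Then $\phi_1$ is replaced by $e(\alpha m)\cdot 1_{\alpha_m=\alpha}$, where the indicator is constant on intervals of length $\gg\delta^{O_d(1)}N^{\deg P_1}$ and hence also along $m\mapsto m+P_1(n)$ for $n$ in short subintervals; this is absorbed into the $f_0$ slot while $e(\alpha P_1(n))$ is absorbed into $\theta$ without changing $\|\theta\|_{u^{d+1}[N]}$. This yields a genuine $(k-1)$-term average and closes the induction with polynomial losses. Your global-phase route would need a substitute for this iteration, and none is provided.
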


The proof of Theorem~\ref{thm:GVNT2} is given in the next three subsections. In  Subsection~\ref{sub:transferlinear}, we show that $f_0,f_1$ can be assumed to be ``locally linear phase functions'' in a suitable sense. In Subsection~\ref{sub:circle}, we prove the $k=1$ case of the theorem using the circle method; this works as a base case for the proof which is by induction on $k$. Finally, in Subsection~\ref{sub:conclude}, we use the conclusions of the preceding subsections together with an iterative argument for the function $f_1$ to conclude the proof.

The second main result of this section is that, for any polynomials $P_1,\ldots, P_k$, the polynomial counting operator~\eqref{eq:Lambda} is always bounded by some $U^s[N]$ norm of the weight $\theta$, with linear dependence on the Gowers norm. In what follows, for any finite nonempty collection $\mathcal{Q}$ of polynomials with integer coefficients, we define its \emph{degree} $\deg \mathcal{Q}$ as the largest of the degrees of the polynomials in $\mathcal{Q}$.

\begin{theorem}[A  polynomial generalised von Neumann theorem with $U^s$ control]\label{thm:GVNT1} Let $d\in \mathbb{N}$ and $C\geq 1$. Let $N\geq 1$, and let $\theta\:[N]\to \mathbb{C}$ be a function.  Let $\mathcal{Q}$ be a finite collection of polynomials with integer coefficients satisfying $\deg \mathcal{Q}=d$ and $Q([N])\subset [-CN^d,CN^d]$ for all $Q\in \mathcal{Q}$. For each $Q\in \mathcal{Q}$ let $f_Q\: \mathbb{Z}\to \mathbb{C}$ be a function supported on $[-CN^d,CN^d]$ with $|f_Q|\leq 1$. Then, for some natural number $s\ll_{|\mathcal{Q}|,\deg \mathcal{Q}}1$, we have
\begin{align}\label{eq:GVNT1-1poly}
\left|\frac{1}{N^{d+1}}\sum_{m\in \mathbb{Z}}\sum_{n\in [N]}\theta(n)\prod_{Q\in \mathcal{Q}}f_Q(m+Q(n))\right|\ll_{|\mathcal{Q}|,\deg \mathcal{Q},C} \|\theta\|_{U^{s}[N]}.    
\end{align} 
\end{theorem}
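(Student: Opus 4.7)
The plan is to prove Theorem~\ref{thm:GVNT1} by the classical PET (``Polynomial Exhaustion Technique'') induction of Bergelson, using van der Corput's inequality (Lemma~\ref{le:vdc}) as the main reduction tool. We would associate to each finite family of polynomials $\mathcal{Q}$ a ``PET type''---a lexicographically ordered tuple encoding, for each degree, the number of distinct leading coefficients among polynomials of that degree---and show that a single PET step strictly decreases this type (for generic values of the auxiliary $h$-parameters). Since the initial type is bounded by a function of $|\mathcal{Q}|$ and $\deg\mathcal{Q}$, the induction would terminate after $s \ll_{|\mathcal{Q}|,\deg\mathcal{Q}} 1$ steps, at which point the $f_Q$ factors disappear and the $\theta$-part is recognisable as a power of the Gowers norm $\|\theta\|_{U^s[N]}$.

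\textbf{The PET step.} Writing the left-hand side of~\eqref{eq:GVNT1-1poly} as $\Lambda = \mathbb{E}_m G(m)$ with $G(m) = \mathbb{E}_n \theta(n) \prod_{Q\in \mathcal{Q}} f_Q(m + Q(n))$, we would apply Cauchy--Schwarz in $m$ and substitute $h := n' - n$ to obtain
\[
|\Lambda|^2 \;\ll_C\; \mathbb{E}_h\,\mathbb{E}_n\,\Delta_h\theta(n)\, \mathbb{E}_m \prod_{Q \in \mathcal{Q}} f_Q(m+Q(n))\,\overline{f_Q(m+Q(n+h))}.
\]
Next we would select a pivot $Q^*\in \mathcal{Q}$ according to Bergelson's recipe (say, a polynomial of minimal degree, with ties broken by the standard Leibman ordering) and translate $m \mapsto m - Q^*(n)$; this eliminates $Q^*$ from the first product and replaces $\mathcal{Q}$ by the parameterised system
\[
\mathcal{Q}'_h \;:=\; \{(Q - Q^*)(n)\,:\,Q \in \mathcal{Q}\setminus\{Q^*\}\}\,\cup\,\{Q(n+h) - Q^*(n)\,:\,Q\in \mathcal{Q}\},
\]
viewed as polynomials in $n$ for each fixed $h$. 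A case analysis (the standard Bergelson--Leibman bookkeeping) shows that for generic $h$ the PET type of $\mathcal{Q}'_h$ is strictly less than that of $\mathcal{Q}$: in particular $Q^*(\cdot+h) - Q^*(\cdot)$ has degree strictly less than $\deg Q^*$, and the remaining top-degree polynomials have more structured leading coefficients.

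\textbf{Conclusion.} After $s$ PET iterations we reduce to a trivial system (consisting, say, entirely of constants in $n$), in which the $f_Q$ factors become independent of $n$ and contribute only a bounded factor depending on $|\mathcal{Q}|$, $\deg\mathcal{Q}$, $C$. The weight $\theta$ has meanwhile been differenced $s$ times, yielding
\[
\Delta_{h_s}\cdots\Delta_{h_1}\theta(n) \;=\; \prod_{\omega\in\{0,1\}^s}\mathcal{C}^{|\omega|}\theta(n+\omega\cdot\vec h),
\]
and averaging over $n,h_1,\ldots,h_s$ produces, in view of the definition of the Gowers norm from Section~\ref{sec:notation}, precisely $\|\theta\|_{U^s[N]}^{2^s}$ up to constants. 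Taking $2^s$-th roots gives the claimed linear bound $|\Lambda| \ll \|\theta\|_{U^s[N]}$.

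\textbf{Main obstacle.} The main technical point will be the combinatorial PET bookkeeping needed to verify strict decrease of the type at each step, which requires choosing the pivot carefully (per the Bergelson--Leibman recipe) and discarding a small set of bad parameters $h$ where accidental cancellations in the leading coefficients could undo the reduction. Secondary issues are the boundary corrections from the shifts $m \mapsto m - Q^*(n)$ and from restricting $n, h_i$ to $[N]$, which contribute errors of relative size $O(N^{-1})$ that are absorbed into the implicit constants. Notably, no boundedness hypothesis on $\theta$ is required anywhere, matching the hypothesis of the theorem.
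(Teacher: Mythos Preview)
Your approach via PET induction (van der Corput plus a minimal-degree pivot) is essentially the paper's; the paper merely organises it as an induction with the linear case (Lemma~\ref{le:GVNT0}) as base rather than unrolling all the way to constants. Two small corrections: with the minimal-degree pivot the type of $\mathcal{Q}'_h$ decreases for \emph{every} $h$ (no bad parameters need discarding), and at the terminal step you obtain $\mathbb{E}_{\vec h}\bigl|\mathbb{E}_n \Delta_{\vec h}\theta(n)\bigr|$ rather than $\|\theta\|_{U^s[N]}^{2^s}$, so one further H\"older/Cauchy--Schwarz step (as in the end of the proof of Lemma~\ref{le:GVNT-multi}) is needed to reach $\|\theta\|_{U^{s+1}[N]}$.
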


The proof of Theorem~\ref{thm:GVNT1} is based on the PET induction scheme and is given in Subsection~\ref{sub:genpoly}. We also present there a multidimensional version of the special case where $\deg \mathcal{Q}=1$ (Lemma~\ref{le:GVNT-multi}); this will be needed for the proof of Theorem~\ref{thm_commuting}.

\subsection{Transferring to locally linear functions}\label{sub:transferlinear}

The first step in the proof of Theorem~\ref{thm:GVNT2} is to show that if a polynomial average of the form~\eqref{eq:Lambda} is large, then the functions $f_0,f_1$ can be assumed to be locally linear phase functions. In what follows, we say that a function $\phi\colon \mathbb{Z}\to \mathbb{C}$ is a \emph{locally linear phase function of resolution $M$} is for some real numbers $\alpha_m$ we have $\phi(m)=e(\alpha_mm)$ for all $m\in \mathbb{Z}$ and if additionally
 there is a partition of $\mathbb{Z}$ into discrete intervals of length $M$ such that $m\mapsto \alpha_m$ is constant on the cells of that partition. We call the set $\{\alpha_m\pmod 1\colon m\in \mathbb{Z}\}$ the \emph{spectrum} of $\phi$.

\begin{proposition}[Reduction to locally linear phases]\label{prop_loclin}
 Let $C\geq 1$, $d,k\in \mathbb{N}$, and let $P_1,\ldots, P_k$ be polynomials with integer coefficients and with $\deg P_1<\cdots <\deg P_k=d$. Let $N\geq 1$, and let $f_0,f_1,\ldots, f_k\colon \mathbb{Z}\to \mathbb{C}$ be functions supported on $[-CN^d, CN^d]$ and with $|f_i|\leq 1$ for all $0\leq i\leq k$. Let $\theta\colon [N]\to \mathbb{C}$ be a function with $|\theta|\leq 1$. Then, for some $1\leq K\ll_d 1$, we have 
 \begin{align*}
 &\left|\frac{1}{N^{d+1}}\sum_{m\in \mathbb{Z}}\sum_{n\in [N]}\theta(n)f_0(m)f_1(m+P_1(n))\cdots f_k(m+P_k(n))\right|\\
 &\quad \ll_{C,P_1,\ldots, P_k} \left(N^{-1}+\left|\frac{1}{N^{d+1}}\sum_{m\in \mathbb{Z}}\sum_{n\in [N]}\theta(n)\phi_0(m)\phi_1(m+P_1(n))\cdots f_k(m+P_k(n))\right|\right)^{1/K}       
 \end{align*}
 for some locally linear phase functions $\phi_0,\phi_1$ of resolution $\gg_{C,P_1,\ldots, P_k}\delta^{O_d(1)}N^{\deg P_1}$. Moreover, we may assume that the spectra of $\phi_0,\phi_1$ belong to $\frac{1}{N^{\deg P_1}}\mathbb{Z}$. 
\end{proposition}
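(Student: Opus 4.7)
\emph{Proof plan.} The plan is to work at the natural scale $M \asymp \delta^{O_d(1)} N^{\deg P_1}$, where $\delta := |S|$ denotes the size of the polynomial average on the left-hand side. The significance of this scale is that as $m$ ranges over a block of length $M$ and $n$ over $[N]$, the shift $m + P_1(n)$ remains within $O(1)$ adjacent blocks of length $M$, while the shifts $m + P_i(n)$ for $i \geq 2$ vary across much larger ranges of size $N^{\deg P_i}$. Thus the pair $(f_0, f_1)$ is a ``fine-scale'' pair whose Fourier structure at frequency resolution $1/M$ interacts nontrivially with the polynomial progression, while the remaining functions $f_i$ for $i \geq 2$ are essentially decoupled at this scale.

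First, I would apply van der Corput's inequality (Lemma~\ref{le:vdc}) in the $n$-variable $O_k(1)$ times in the spirit of the PET induction scheme, followed by a Cauchy--Schwarz in $m$. Because $P_1, \ldots, P_k$ have pairwise distinct degrees, each van der Corput step strictly reduces the degree of the dominant polynomial in the resulting configuration, so after finitely many steps the higher-degree functions $f_2, \ldots, f_k$ are absorbed into $\Delta$-derivative factors whose contributions can be handled on average. The upshot is that $|S|^{2^{O_d(1)}}$ is controlled by a ``bilinear'' expression in $f_0, f_1$ of the schematic form
\begin{align*}
\mathbb{E}_{\vec h, m} F_{\vec h, 0}(m) F_{\vec h, 1}(m + L(\vec h)) \theta'(\vec h),
\end{align*}
where $\vec h$ is a vector of shifts of order $N$, $L(\vec h)$ is (essentially) of degree $\deg P_1$ in $\vec h$, $F_{\vec h, i}$ is a $\Delta$-derivative of $f_i$, and $\theta'$ is a product of shifts of $\theta$.

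Second, I would apply the $U^2$ inverse theorem \eqref{eq:U2} block-by-block at scale $M$ to deduce that, for a positive proportion of shifts $\vec h$ and blocks $I_j$, there exist frequencies $\alpha_{\vec h, j}, \beta_{\vec h, j}$ with $|\mathbb{E}_{m \in I_j} F_{\vec h, 0}(m) e(-\alpha_{\vec h, j} m)| \gg \delta^{O_d(1)}$ and likewise for $F_{\vec h, 1}$. Using the cocycle identity $\Delta_{\vec h_1 + \vec h_2} f = \Delta_{\vec h_1} f \cdot (\text{shift of } \Delta_{\vec h_2} f)$ together with a standard additive-combinatorics lifting argument (in the spirit of the Peluse--Prendiville degree-lowering philosophy mentioned in Subsection~\ref{sub:conclude}), the block-wise linear structure of the $\Delta$-derivatives is transferred back to $f_0$ and $f_1$ themselves on a common positive-density set of blocks. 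Rounding the resulting frequencies from $\mathbb{R}$ to the grid $\frac{1}{N^{\deg P_1}}\mathbb{Z}$ produces the desired $\phi_0, \phi_1$; the rounding error of size $O(M/N^{\deg P_1}) = O(\delta^{O_d(1)})$ per block is absorbed into the polynomial loss, and defining $\phi_0, \phi_1$ to be $1$ on ``bad'' blocks causes no further damage.

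The main obstacle will be coordinating the two lifting arguments so that $f_0$ and $f_1$ become locally linear simultaneously on a common positive-density set of blocks, while propagating polynomial $\delta^{O(1)}$-losses through the $O_d(1)$ Cauchy--Schwarz applications and the Fourier-pigeonhole steps. A secondary delicate point is exploiting the distinct-degree hypothesis so that the PET induction does not produce intermediate configurations in which $f_2, \ldots, f_k$ reappear attached to a nontrivial polynomial progression of their own.
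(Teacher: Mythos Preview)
Your plan diverges substantially from the paper's proof, and the divergence hides a genuine gap.

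The paper does \emph{not} run PET induction on the polynomial configuration. Instead it proceeds via the dual function method: set $F(m)=\mathbb{E}_{n\in[N]}\theta(n)\prod_{i=1}^k f_i(m+P_i(n))$, use Cauchy--Schwarz to replace $f_0$ by $\overline{F}$, then apply a single van der Corput in $m$ at scale $N^{\deg P_1}$ to obtain an \emph{unweighted} polynomial average of the functions $\Delta_h\overline{F},\Delta_h f_1,\ldots,\Delta_h f_k$. At this point the paper invokes Peluse's inverse theorem (Lemma~\ref{le:degree-lower}, quoted from \cite{Peluse-FMP}) as a black box to conclude that $\Delta_h F$ is almost periodic on short progressions for many $h$. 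Some further manipulation (Gowers--Cauchy--Schwarz, removal of the Fej\'er weight, the $U^2$ inverse theorem block-by-block) then yields the locally linear phase $\phi_0$; the argument is repeated with a second dual function to get $\phi_1$.

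Your PET-based route breaks at the first step. Repeated van der Corput in $n$ does lower the type of the polynomial collection, but it does \emph{not} ``absorb'' $f_2,\ldots,f_k$ while leaving a clean bilinear form in $f_0,f_1$: every function in the configuration gets differenced at every step, so after $O_d(1)$ iterations you are left with an average over $\vec h$ of a product of many multi-derivatives of \emph{all} of $f_0,\ldots,f_k$ (together with a multi-derivative of $\theta$). This is exactly what produces the $\|\theta\|_{U^s}$ bound of Theorem~\ref{thm:GVNT1}, but it gives no handle on the local Fourier structure of $f_0$ or $f_1$ individually. Your schematic expression $\mathbb{E}_{\vec h,m}F_{\vec h,0}(m)F_{\vec h,1}(m+L(\vec h))\theta'(\vec h)$ is therefore not what PET outputs.

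Even granting some bilinear reduction, your ``cocycle identity plus lifting'' step is where the real content lies, and it is left as a sketch. Passing from local linear structure on $\Delta_{\vec h}$-derivatives back to local linear structure on the original functions, with only polynomial losses and in the presence of the remaining polynomial shifts, is precisely the hard part of the Peluse degree-lowering theory. The paper does not redo this; it imports it wholesale via Lemma~\ref{le:degree-lower}. Your plan would need to supply an argument of comparable strength, which it does not.
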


Proposition~\ref{prop_loclin} may be compared with, and is motivated by, the work of Peluse and Prendiville~\cite[Theorem 1.5]{PP-Roth}, where in the case $k=2$, $\theta\equiv 1$ and $P_1(y)=y, P_2(y)=y^2$, it is proven that $f_0,f_1,f_2$ can be replaced more strongly with major arc locally linear phase functions. In the more general setup of Proposition~\ref{prop_loclin}, it is not possible to reduce to major arc locally linear phase functions.

For the proof of Proposition~\ref{prop_loclin}, we need Peluse's inverse theorem.

 \begin{lemma}[Peluse's inverse theorem]\label{le:degree-lower} Let $k,d_1,d\in \mathbb{N}$ and $C\geq 1$. Let $P_1,\ldots, P_k$ be polynomials with integer coefficients satisfying $P_i(0)=0$ for all $1\leq i\leq k$ and $d_1=\deg P_1<\ldots<\deg P_k=d$, and with all the coefficients of the polynomials $P_i$ being bounded by $C$ in modulus. 

Let $N\geq 1$ and $\delta\in (0,1/2)$. Let $f_1,\ldots, f_k\: \mathbb{Z}\to \mathbb{C}$ be  functions supported on $[-CN^d,CN^d]$ with $|f_i|\leq 1$ for all $0\leq i\leq k$. Then there exists $1\leq B\ll_{d}1$ such that for either $j\in \{0,1\}$ we have
\begin{align}\label{eq:peluse-inverse}\begin{split}
&\left|\frac{1}{N^{d+1}}\sum_{m\in \mathbb{Z}}\sum_{n\in [N]}f_0(m)f_1(m+P_1(n))\cdots f_k(m+P_k(n))\right|\\
&\quad \ll_{C,d} \delta+\left(N^{-1}+\max_{\substack{q\in [\delta^{-B}]\\N'\in [\delta^{B}N^{d_1}, N^{d_1}]}}\frac{1}{N^{d}}\sum_{m\in \mathbb{Z}}\left|\frac{1}{N'}\sum_{n\in [N']}f_j(m+qn)\right|\right)^{1/B}. 
\end{split}
\end{align}
\end{lemma}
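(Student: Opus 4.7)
The plan is to follow Peluse's degree-lowering strategy (as in~\cite{Peluse-FMP} and~\cite{PP-Roth}), adapted to a polynomial configuration with distinct degrees. Assume the left-hand side of~\eqref{eq:peluse-inverse} exceeds $2\delta$, for otherwise there is nothing to prove. I would begin with a PET-type reduction: apply van der Corput's inequality (Lemma~\ref{le:vdc}) repeatedly to eliminate the functions $f_k, f_{k-1},\ldots, f_2$ from the polynomial average one at a time. At each stage the configuration $\{P_1,\ldots,P_k\}$ is replaced by a differenced configuration, which is strictly smaller in the PET weight-vector ordering thanks to the hypothesis $\deg P_1<\cdots<\deg P_k$. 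After $O_k(1)$ such reductions, one obtains an estimate of the form $\|f_j\|_{U^s[N^{d_j}]}\gg_{P_1,\ldots,P_k,C}\delta^{O_d(1)}$ for either $j=0$ or $j=1$, for some $s\ll_k 1$.

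The heart of the argument is the degree-lowering step, carried out by downward induction on $s$. Assume inductively that $\|f_j\|_{U^{t}[N^{d_j}]}\gg \delta^{O_d(1)}$ for some $t\geq 3$. A polynomial-strength inverse principle, established \emph{in the presence of the polynomial configuration} without invoking the full Green--Tao--Ziegler inverse theorem, supplies a degree-$t$ nilsequence correlating with $f_j$ along long progressions. Substituting this nilsequence back into the original polynomial average and applying Cauchy--Schwarz once more, one exploits that the other $P_i$ have differing degrees to ``difference away'' the top-degree part of the nilsequence, which yields the improved bound $\|f_j\|_{U^{t-1}[N^{d_j}]}\gg \delta^{O_d(1)}$. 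Iterating $O_d(1)$ times reduces to $t=2$, at which point the classical Fourier identity~\eqref{eq:U2} produces $\alpha\in \mathbb{R}$ with $|\mathbb{E}_{m\in I}f_j(m)e(-\alpha m)|\gg \delta^{O_d(1)}$ on some sub-interval $I$ of length $\gg N^{d_j}$.

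The final step converts this linear phase correlation into the desired AP-average bound. Using Dirichlet approximation, write $\alpha=a/q+\beta$ with $q\leq \delta^{-B}$ and $|\beta|\leq \delta^{B}N^{-d_1}$, and partition $I$ into progressions modulo $q$ of length $N'\in[\delta^{B}N^{d_1}, N^{d_1}]$, on each of which $e(-\alpha m)$ is nearly constant. Factoring out this constant and taking absolute values produces the right-hand side of~\eqref{eq:peluse-inverse}. The main obstacle is preserving \emph{polynomial} dependence on $\delta$ throughout: the full $U^t$ inverse theorem is only quasipolynomial and would therefore be insufficient, so Peluse's key insight must be invoked at each degree-lowering step, namely that one need not factor the correlating nilsequence globally, only modulo the auxiliary functions already present in the average. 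Tracking all constants carefully, and verifying that every intermediate differenced configuration remains nondegenerate (which is precisely where the distinct-degree hypothesis enters in a crucial way), is the most delicate aspect of the argument.
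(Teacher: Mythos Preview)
The paper does not re-prove this result: its proof is a two-line citation of Peluse's theorem~\cite[Theorem~3.3]{Peluse-FMP} for the case $j=0$, followed by the change of variables $m'=m+P_1(n)$ (which swaps the roles of $f_0$ and $f_1$ and replaces $\{P_1,\ldots,P_k\}$ by $\{-P_1,P_2-P_1,\ldots,P_k-P_1\}$) to deduce the case $j=1$. You are instead sketching a from-scratch proof of Peluse's theorem itself, which is unnecessary here but not unreasonable in principle.

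However, your description of the degree-lowering mechanism is not accurate. Peluse's argument does \emph{not} produce ``a degree-$t$ nilsequence correlating with $f_j$'' and then ``difference away the top-degree part of the nilsequence''; indeed, the entire point of the method is to bypass the inverse theorem and nilsequences altogether. What actually happens is that one replaces $f_0$ (or $f_1$) by the \emph{dual function} $F(m)=\mathbb{E}_{n\in[N]}\prod_i f_i(m+P_i(n))$, shows that the $U^{t}$ norm of $F$ at scale $N^{d_1}$ is controlled by its $U^{t-1}$ norm (this is where the distinct-degree hypothesis enters, via a van der Corput / PET step applied to the dual), and iterates down to $U^2$. At no stage is any structure theorem for $f_j$ invoked; the polynomial dependence on $\delta$ is maintained precisely because only Cauchy--Schwarz and elementary Fourier analysis are used. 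Your final step, converting $U^2$ control into the arithmetic-progression average via Dirichlet approximation, is correct in spirit. If you wish to write out the argument rather than cite it, you should rework the degree-lowering paragraph to reflect the dual-function mechanism; as written it would not go through, since the nilsequence-substitution step you describe does not exist in this framework.
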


\begin{proof} This will follow from~\cite[Theorem 3.3]{Peluse-FMP} after some reductions.
 It suffices to show that for each $j\in \{0,1\}$ there exists $1\leq B\ll_{d}1$ such that~\eqref{eq:peluse-inverse} holds, since we may increase $B$ if necessary. 
 
 Suppose first that $j=0$. Using the notation~\eqref{eq:Lambda}, let
 \begin{align}\label{eq:etadef}
 \eta\coloneqq |\Lambda^{CN^d,N}_{P_1,\ldots, P_k}(1;f_0,\ldots, f_k)|.
 \end{align}
 Then $C\eta$ is equal to the left-hand side of~\eqref{eq:peluse-inverse}.
  We may clearly assume that $\eta\geq \delta$ and that $\delta\leq 1/L$ for any given constant $L=L_{C,d}$. We may further assume that  $N\geq \delta^{-K}\geq \eta^{-K}$ for any given constant $K=K_d$, since otherwise by taking $B=Kd_1$ the claim follows (with $q=1$ in~\eqref{eq:peluse-inverse}) from the crude triangle inequality bound 
  $$|\Lambda^{CN^d,N}_{P_1,\ldots, P_k}(1;f_0,\ldots, f_k)|\leq \frac{1}{CN^d}\sum_{m\in [-CN^d,CN^d]}|f_0(m)|.$$ 
  Now,  applying\footnote{In~~\cite[Theorem 3.3]{Peluse-FMP}, the functions $f_i$ are assumed to be supported on $[CN^d]$ instead of $[-CN^d,CN^d]$, but this makes no difference in the argument.}~\cite[Theorem 3.3]{Peluse-FMP} (with $\eta$ in place of $\delta$) we see that there exists some $1\leq B\ll_{d}1$ such that
 \begin{align*}
 \max_{\substack{q\in [\delta^{-B}]\\N'\in [\delta^{B}N^{d_1}, N^{d_1}]}}\frac{1}{N^{d}}\sum_{m\in \mathbb{Z}}\left|\frac{1}{N'}\sum_{n\in [N']}f_j(m+qn)\right|\gg_{C,d} \eta^{B},   \end{align*}
 which in view of~\eqref{eq:etadef} implies the claim.

 Suppose then that $j=1$. Then, making the change of variables $m'=m+P_1(n)$ in~\eqref{eq:Lambda}, we see that
 \begin{align*}
\Lambda^{CN^d,N}_{P_1,\ldots, P_k}(1;f_0,f_1,f_2,\ldots, f_k)=\Lambda^{CN^d,N}_{-P_1,P_2-P_1,\ldots, P_k-P_1}(1;f_1,f_0,f_2,\ldots, f_k)
 \end{align*}
 Now the claim follows from the $j=0$ case handled above. 
\end{proof}

\begin{proof}[Proof of Proposition~\ref{prop_loclin}]
We begin with a few reductions. Firstly, we may extend $\theta$ to a function on $\mathbb{Z}$ by setting it equal to $0$ outside $[N]$. Secondly, we may assume that $P_i(0)=0$ for $1\leq i\leq k$ by translating the functions $f_i$ if necessary. Thirdly, we may assume that $C$ is large enough in terms of $P_1,\ldots, P_k$ so that 
\begin{align*}
\max_{1\leq i\leq k}\max_{n\in [N]}|P_i(n)|\leq CN^d.    
\end{align*}  
Let 
\begin{align}\label{eq:etalambda}
\eta\coloneqq |\Lambda_{P_1,\ldots, P_k}^{CN^d,N}(\theta; f_0,f_1,\ldots, f_k)|.  
\end{align}
We may assume that $N\geq \eta^{-K}$ for any given constant $K$ depending on $d$, as otherwise the claim readily follows. 

We first wish to replace $f_0$ with a locally linear phase function.
For $m\in \mathbb{Z}$, define the first dual function 
\begin{align*}
F(m)\coloneqq \mathbb{E}_{n\in [N]}\theta(n)f_1(m+P_1(n))\cdots f_k(m+P_k(n)).
\end{align*}
Then by~\eqref{eq:etalambda} we have
\begin{align*}
\left|\sum_{m\in \mathbb{Z}}f_0(m)F(m)\right|\gg_C \eta N^d,   
\end{align*}
so by the Cauchy--Schwarz inequality we get
\begin{align}\label{eq:FF}
\left|\sum_{m\in \mathbb{Z}}\overline{F}(m)F(m)\right|\gg_C \eta^2N^d.     
\end{align}

By the definition of $F$,~\eqref{eq:FF} expands out as
\begin{align*}
\left|\sum_{m\in \mathbb{Z}}\mathbb{E}_{n\in [N]}\theta(n)\overline{F}(m)f_1(m+P_1(n))\cdots f_k(m+P_k(n))\right|\gg_C \eta^2 N^d.     
\end{align*}
Denote $d_1\coloneqq \deg P_1$. Applying Cauchy--Schwarz and van der Corput's inequality (Lemma~\ref{le:vdc}), this implies that
\begin{align*}
\left|\sum_{h\in [-N^{d_1},N^{d_1}]}\mu_{N^{d_1}}(h)\mathbb{E}_{n\in [N]}\sum_{m\in \mathbb{Z}}\Delta_h\overline{F}(m)\Delta_hf_1(m+P_1(n))\cdots \Delta_hf_k(m+P_k(n))\right|\gg_C \eta^4 N^d.   
\end{align*}
Noting that $N^{d_1}|\mu_{N^{d_1}}(h)|\ll 1_{|h|\leq N^{d_1}}$, from the triangle inequality and the pigeonhole principle we now see that 
\begin{align}\label{eq:Flarge}
\left|\mathbb{E}_{n\in [N]}\sum_{m\in \mathbb{Z}}\Delta_h\overline{F}(m)\Delta_hf_1(m+P_1(n))\cdots \Delta_hf_k(m+P_k(n))\right|\gg_C \eta^4 N^d     \end{align}
for $\gg_C \eta^{4}N^{d_1}$ integers $h\in [-N^{d_1},N^{d_1}]$.

Applying Lemma~\ref{le:degree-lower} and the pigeonhole principle, from~\eqref{eq:Flarge} we conclude that there exists a constant $1\leq B\ll_{d}1$ and an integer $1\leq q\ll_{C,P_1,\ldots, P_k} \eta^{-B}$ such that
\begin{align}\label{eq:qM}
 \mathbb{E}_{h\in [-N^{d_1},N^{d_1}]}\max_{M\in [\eta^{B}N^{d_1},N^{d_1}]}\mathbb{E}_{m\in [-2CN^d,2CN^d]}\left|\mathbb{E}_{y\in [qM]}1_{y\equiv 0\pmod q}\Delta_h F(m+y)\right|\gg_{C,P_1,\ldots, P_k}\eta^B.   
\end{align}
Using the simple bound 
\begin{align}\label{eq:shortening}
\mathbb{E}_{m\in [-X,X]}|\mathbb{E}_{y\in [Y]}a(m+y)|\ll \mathbb{E}_{m\in [-X,X]}|\mathbb{E}_{y\in [Y']}a(m+y)|+Y'/Y+Y/X,
\end{align}
valid for any bounded sequence $a\colon \mathbb{Z}\to \mathbb{C}$ and $1\leq Y'\leq Y\leq X$,
and setting 
$$N'=\eta^{4B+2}N^{d_1},$$
from~\eqref{eq:qM} we deduce that
\begin{align*}
 \mathbb{E}_{h\in [-N^{d_1},N^{d_1}]}\mathbb{E}_{m\in [-2CN^d,2CN^d]}\left|\mathbb{E}_{y\in [N']}1_{y\equiv 0\pmod q}\Delta_h F(m+y)\right|\gg_{C,P_1,\ldots, P_k}\eta^{B}.  
\end{align*}
Splitting the $h$ average into intervals of length $2N'$ and applying the pigeonhole principle, we see that there exists some integer $|\ell|\leq N^{d_1}$ such that
\begin{align*}
 \mathbb{E}_{h\in [-N',N']}\mathbb{E}_{m\in [-2CN^d,2CN^d]}\left|\mathbb{E}_{y\in [N']}1_{y\equiv 0\pmod q}\Delta_{h+\ell} F(m+y)\right|\gg_{C,P_1,\ldots, P_k}\eta^{B}.  
\end{align*}
From Cauchy--Schwarz and van der Corput's inequality, we then see that
\begin{align}\label{eq:eta10B}\begin{split}
&\mathbb{E}_{h\in [-N',N']}\mathbb{E}_{h'\in [-N',N']}\lfloor N'\rfloor \mu_{N'}(h')\mathbb{E}_{m\in [-2CN^d,2CN^d]}\mathbb{E}_{y\in [N']}1_{y,y+h'\equiv 0\pmod q}\Delta_{h+\ell} \Delta_{h'}F(m+y)\\
&\quad \gg_{C,P_1,\ldots, P_k} \eta^{2B}.
\end{split}
\end{align}

We wish to remove the weight $\mu_{N'}(h')$ from~\eqref{eq:eta10B}. To this end, we note the easily verified Fourier expansion
\begin{align*}
1-|x|=\frac{1}{2}+\frac{2}{\pi^2}\sum_{n\equiv 1\pmod 2}\frac{1}{n^2}e\left(\frac{n}{2}x\right)    
\end{align*}
for $x\in [-1,1]$, which allows us to write for $|h'|\leq N$ the expansion
\begin{align*}
\lfloor N'\rfloor \mu_{N'}(h')=1-\left|\frac{h'}{\lfloor N'\rfloor}\right| =\frac{1}{2}-\frac{4}{\pi^2}\sum_{n\equiv 1\pmod 2}\frac{1}{n^2}e\left(\frac{h'}{2\lfloor N'\rfloor}n\right).   
\end{align*}
Substituting this to~\eqref{eq:eta10B} and expanding $1_{y\equiv 0\pmod q}=\sum_{0\leq r<q}e(ry/q)$ and using $e(\xi h')=e(\xi(y+h+h'))e(-\xi(y+h))$ and $e(\xi y)=e(\xi(y+h))e(\xi(y+h'))e(-\xi(y+h+h'))$, we see that 
\begin{align*}
&\mathbb{E}_{h\in [-N',N']}\mathbb{E}_{h'\in [-N',N']}\mathbb{E}_{m\in [-2CN^d,2CN^d]}\mathbb{E}_{y\in [N']}\prod_{\omega\in \{0,1\}^2} F_{\omega}(m+y+\omega\cdot (h,h'))\\
&\quad \gg_{C,P_1,\ldots, P_k} \eta^{2B},
\end{align*}
where $F_{(0,0)}=F$ and $|F_{\omega}(x)|\ll_C 1$ for all $x, \omega$. 
Hence, by the Gowers--Cauchy--Schwarz inequality~\eqref{eq:gcs},
we find 
\begin{align*}
\mathbb{E}_{m\in [-2CN^d,2CN^d]}\|F\|_{U^2[m,m+N']}\gg_{C,P_1,\ldots, P_k} \eta^{2B}.
\end{align*}
Applying the pigeonhole principle, we deduce that
\begin{align}\label{eq:Mset}
\|F\|_{U^2[m,m+N']}\gg_{C,P_1,\ldots, P_k} \eta^{2B}
\end{align}
for $\gg \eta^{2B} N^d/N'$ integers $m\in N'\mathbb{Z}\cap [-CN^d,CN^d]$.

Note that for any complex number $z$ we have $|z|\leq 10 \max_{j\in \{0,1,2\}}\{\textnormal{Re}(e(j/3)z)\}$. For $j\in \{0,1,2\}$, let $\mathcal{M}_j$ be the set of $m\in N'\mathbb{Z}\cap [-CN^d,CN^d]$ for which
\begin{align}\label{eq:Mset2}
\sup_{\alpha\in [0,1]}\textnormal{Re}\left(e\left(\frac{j}{3}\right)\mathbb{E}_{x\in [m,m+N']}F(x)e(\alpha x)\right)\geq \eta^{4B+1}.    
\end{align}
Then, by the pigeonhole principle, the $U^2[N']$ inverse theorem~\eqref{eq:U2} and~\eqref{eq:Mset}, we have $|\mathcal{M}_{j_0}|\gg \eta^{2B} N^d/N'$ for some $j_0\in \{0,1,2\}$. For any $m\in \mathcal{M}_{j_0}$, let $\alpha_m'\in \frac{1}{N^{\deg P_1}}\mathbb{Z}$ be a point where the supremum in~\eqref{eq:Mset2} is attained, and let $\alpha_m$ be an element of $\frac{1}{N^{d_1}}\mathbb{Z}$ nearest to $\alpha_m'$. Recalling the definition of $N'$, we have 
\begin{align*}
\textnormal{Re}\left(e\left(\frac{j_0}{3}\right)\mathbb{E}_{x\in [m,m+N']}F(x)e(\alpha_m x)\right)\gg_{C,P_1,\ldots, P_k} \eta^{4B+1}.  \end{align*}

For $m\in N'\mathbb{Z}\setminus \mathcal{M}_{j_0}$, note that by Parseval's identity there is some $\alpha_m\in \frac{1}{N^{d_1}}\mathbb{Z}$ for which 
\begin{align*}
\left|\sum_{x\in [m,m+N']}F(x)e(\alpha_m x)\right| \ll_C (N')^{1/2}.   
\end{align*}
Now, extend the definition of $\alpha_m$ from $N'\mathbb{Z}$ to all of $\mathbb{Z}$ by letting $\alpha_m=\alpha_{m'}$, where $m'$ is the largest element of $N'\mathbb{Z}$ that is at most $m$. Then, define the locally linear phase function $\phi_0(m)=e(\alpha_mm)$, which has resolution $N'$. We now have
\begin{align*}
\left|\sum_{x\in [-CN^d,CN^d]}F(x)\phi_0(x)\right|\gg_{C,P_1,\ldots, P_k} \eta^{2B}\cdot \eta^{4B+1}
\end{align*}
Recalling the definition of $F$, we conclude that
\begin{align}\label{eq:Fconclusion}
\left|\sum_{m\in \mathbb{Z}}\sum_{n\in [N]}\theta(n)\phi_0(m)f_1(m+P_1(n))\cdots f_k(m+P_k(n))\right|\gg_{C,P_1,\ldots, P_k}\eta^{B'}N^{d+1},   
\end{align}
where $B'=6B+1$.

We proceed to replace also $f_1$ with a locally linear phase function.
For $m\in \mathbb{Z}$, define the second dual function as
\begin{align*}
G(m)\coloneqq \mathbb{E}_{n\in [N]} \theta(n)\phi_0(m-P_1(n))f_2(m+P_2(n)-P_1(n))\cdots f_k(m+P_k(n)-P_1(n)).   
\end{align*}
Making a change of variables, from~\eqref{eq:Fconclusion} it follows that
\begin{align*}
\left|\sum_{m\in \mathbb{Z}}f_1(m)G(m)\right|\gg_{C,P_1,\ldots, P_k}\eta^{B'}N^d.
\end{align*}
Arguing verbatim as above, we deduce that there exists a  locally linear phase function $\phi_1\:\mathbb{Z}\to \mathbb{C}$ of resolution $N''=\eta^{B''}N^{d_1}$ and with spectrum in $\frac{1}{N^{d_1}}\mathbb{Z}$ such that 
    \begin{align*}
\left|\sum_{m\in \mathbb{Z}}G(m)\phi_1(m)\right|\gg_{C,P_1,\ldots, P_k}\eta^{B''}N^d,     
\end{align*}
where $B''=6B+1'$.

Recalling the definition of $G$, this means that
\begin{align*}
\left|\sum_{m\in \mathbb{Z}}\sum_{n\in [N]}\theta(n)\phi_0(m)\phi_1(m+P_1(n))f_2(m+P_2(n))\cdots f_k(m+P_k(n))\right|\gg_{C,P_1,\ldots, P_k}\eta^{B''}N^{d+1}.    
\end{align*}
This gives the desired claim.
\end{proof}

\subsection{A circle method bound}\label{sub:circle}

The proof of Theorem~\ref{thm:GVNT2} will proceed by induction on $k$, so we first need to bound the weighted averages~\eqref{eq:Lambda} with $k=1$. These averages can be controlled simply by using classical Fourier analysis.

\begin{lemma}\label{le:circle1}
Let $d\in \mathbb{N}$ and $C\geq 1$. Let $P$ be a polynomial of degree $d$ with integer coefficients. Let $N\geq 1$, and let $f_0,f_1\colon \mathbb{Z}\to \mathbb{C}$ be functions supported on $[-CN^d,CN^d]$ with $|f_i|\leq 1$ for both $i\in \{0,1\}$, and let $\theta\colon [N]\to \mathbb{C}$ be a function. Then we have
\begin{align}\label{eq:circletheta}
\left|\frac{1}{N^{d+1}}   \sum_{m\in \mathbb{Z}}\sum_{n\in [N]}\theta(n)f_0(m)f_1(m+P(n))\right|\ll_{C,P} \|\theta\|_{u^{d+1}[N]}. 
\end{align}
\end{lemma}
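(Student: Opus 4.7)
The plan is to prove the bound by standard Fourier analysis (the circle method), taking advantage of the fact that the phase $\alpha P(n)$ arising from the Fourier inversion of $f_1$ is a polynomial of degree exactly $d$ in $n$, which is precisely what the $u^{d+1}[N]$ norm is designed to detect.

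First I would expand $f_0, f_1$ via Fourier inversion on $\mathbb{Z}$. Writing $\widehat{f_i}(\alpha) \coloneqq \sum_{x \in \mathbb{Z}} f_i(x) e(-\alpha x)$, both transforms are well-defined trigonometric polynomials because $f_0, f_1$ are supported on $[-CN^d, CN^d]$. A short computation using $f_1(m+P(n)) = \int_0^1 \widehat{f_1}(\alpha) e((m+P(n))\alpha) \, d\alpha$ and summing first in $m$ gives
\begin{align*}
\sum_{m \in \mathbb{Z}} \sum_{n \in [N]} \theta(n) f_0(m) f_1(m+P(n)) = \int_0^1 \widehat{f_0}(-\alpha) \widehat{f_1}(\alpha) \, S_\theta(\alpha) \, d\alpha,
\end{align*}
where $S_\theta(\alpha) \coloneqq \sum_{n \in [N]} \theta(n) e(\alpha P(n))$.

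Next I would estimate $S_\theta$ uniformly in $\alpha$. Since $P \in \mathbb{Z}[y]$ has degree $d$, the map $n \mapsto \alpha P(n)$ is a real polynomial of degree at most $d$ in $n$, so by definition of the $u^{d+1}[N]$ norm,
\begin{align*}
|S_\theta(\alpha)| \leq N \|\theta\|_{u^{d+1}[N]} \qquad \text{for every } \alpha \in [0,1].
\end{align*}
It then remains to control the Fourier factor. By Cauchy--Schwarz and Parseval's identity,
\begin{align*}
\int_0^1 |\widehat{f_0}(-\alpha) \widehat{f_1}(\alpha)| \, d\alpha \leq \left(\sum_{m} |f_0(m)|^2 \right)^{1/2} \left(\sum_m |f_1(m)|^2 \right)^{1/2} \ll_C N^d,
\end{align*}
using $\|f_i\|_\infty \leq 1$ and $\operatorname{supp}(f_i) \subset [-CN^d, CN^d]$. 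Combining these two bounds yields
\begin{align*}
\left| \sum_{m \in \mathbb{Z}} \sum_{n \in [N]} \theta(n) f_0(m) f_1(m+P(n)) \right| \ll_C N^{d+1} \|\theta\|_{u^{d+1}[N]},
\end{align*}
and dividing by $N^{d+1}$ finishes the proof.

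There is no real obstacle here; the only mild point is to separate the roles of $\alpha$ on $\widehat{f_0}, \widehat{f_1}$ (handled by Parseval) from the role of $\alpha$ in the phase $e(\alpha P(n))$ (handled by $u^{d+1}[N]$ control). The implied constant's dependence on $C$ and $P$ enters only through the support size of the $f_i$, not through the coefficients of $P$, since the polynomial phase test is taken over \emph{all} real-coefficient polynomials of degree at most $d$.
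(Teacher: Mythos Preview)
Your proof is correct and follows essentially the same approach as the paper: both arguments use Fourier inversion (the circle method) to factor the sum as $\int_0^1 \widehat{f_0}(-\alpha)\widehat{f_1}(\alpha)S_\theta(\alpha)\,d\alpha$, bound $S_\theta$ pointwise by $N\|\theta\|_{u^{d+1}[N]}$, and handle the remaining integral via Cauchy--Schwarz and Parseval. The only cosmetic difference is that the paper introduces an auxiliary summation variable $a=m+P(n)$ and orthogonality of characters rather than invoking Fourier inversion directly on $f_1$, but the resulting integral is the same.
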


\begin{proof}
Let $C'\geq 1$ (depending on $C,P$) be such that $\max_{n\in [N]}|P(n)|\leq (C'-C)N^d$. Then, by the orthogonality of characters, the left-hand side of~\eqref{eq:circletheta} without absolute values equals to
\begin{align*}
 &\frac{1}{N^{d+1}}\sum_{|a|\leq (C_P+C)N^d}\sum_{|m|\leq CN^d}\sum_{n\in [N]}\theta(n)f_0(m)f_1(a)1_{a=m+P(n)}\\
 &\quad =\frac{1}{N^{d+1}}\int_{0}^{1}\left(\sum_{|a|\leq C'N^d}f_1(a)e(\xi a)\right)\left(\sum_{|m|\leq CN^d}f_0(m)e(-\xi m)\right)\left(\sum_{n\in [N]}\theta(n)e(-\xi P(n))\right)\d \xi.   
\end{align*}
Now the claim follows by bounding the exponential sum involving $\theta$ pointwise by $N\|\theta\|_{u^{d+1}[N]}$ and by using Cauhcy--Schwarz and Parseval's identity to the remaining two exponential sums. 
\end{proof}

\subsection{Proof of Theorem~\ref{thm:GVNT2}}\label{sub:conclude}

We are now ready to prove the claimed estimate for the operator~\eqref{eq:Lambda} in the case of distinct degree polynomials.

\begin{proof}[Proof of Theorem~\ref{thm:GVNT2}] We use induction on $k$. The base case $k=1$ follows from Lemma~\ref{le:circle1}. Suppose that the case $k-1$ has been proven for some $k\geq 2$, and consider the case $k$. 

\textbf{Step 1: Reduction to locally linear phase functions.} Let $\delta\coloneqq |\Lambda_{P_1,\ldots, P_k}^{CN^d,N}(\theta; f_0,f_1,\ldots, f_k)|$. We may assume that $1/K''\geq \delta\geq N^{-1/K'}$ for any large constants $K'=K'_d$ and $K''=K''_{C,P_1,\ldots, P_k}$, as otherwise there is nothing to prove. By Proposition~\ref{prop_loclin}, there exist $C_d\geq 1$ and locally linear phase functions $\phi_0,\phi_1\colon\mathbb{Z}\to \mathbb{C}$ of resolution $\geq N'$ for some $N'\gg_{C,P_1,\ldots, P_k} \delta^{C_d}N^{\deg P_1}$, and with the spectra of $\phi_0,\phi_1$ belonging to $\frac{1}{N^{\deg P_1}}\mathbb{Z}$, such that
\begin{align}\label{eq:deltalambda0}
|\Lambda_{P_1,\ldots, P_k}^{CN^d,N}(\theta; \phi_0,\phi_1,f_2,\ldots, f_k)|\geq \delta^{C_d}.   \end{align}

\textbf{Step 2: An iteration for the locally linear phase function.}
We can write $\phi_1(m)=e(\alpha_m m)$ for some $\alpha_m\in \frac{1}{N^{\deg P_1}}\mathbb{Z}$, with $m\mapsto \alpha_m$ is being constant on the the intervals $[jN'+a,(j+1)N'+a)$ for some $a\in [N']$ and all $j\in \mathbb{Z}$. For any set $S\subset \mathbb{R}$, write $\phi_{1,S}(m)\coloneqq \phi_1(m)1_{\alpha_m\not \in S}$. 

\textbf{Claim.} If $C_d'$ and $K'_d$ are large, the following holds.  For any $\eta \in (0,N^{-1/K_d'})$ and any finite (possibly empty) set $S$, if
\begin{align}\label{eq:deltalambda}
|\Lambda_{P_1,\ldots, P_k}^{CN^d,N}(\theta; \phi_0,\phi_{1,S},f_2,\ldots, f_k)|\geq \eta,
\end{align}
then either 
\begin{align}\label{eq:deltalambda3}
|\Lambda_{P_1,\ldots, P_k}^{CN^d,N}(\theta; \phi_{0},\phi_1-\phi_{1,\{\alpha\}},f_2,\ldots, f_k)|\geq \eta^{2C_d'}   \end{align}
or there exists $\alpha\not \in S$ such that $\alpha_m=\alpha$ for $\geq \eta^{C_d'}N^d$ integers $m\in [-CN^d,CN^d]$ and  
\begin{align}\label{eq:deltalambda2}
|\Lambda_{P_1,\ldots, P_k}^{CN^d,N}(\theta; \phi_{0},\phi_{1,S\cup \{\alpha\}},f_2,\ldots, f_k)|\geq \eta-\eta^{2C_d'}.
\end{align}

For proving this claim, we first apply van der Corput's inequality (Lemma~\eqref{le:vdc}) to~\eqref{eq:deltalambda} to conclude that there is a set $\mathcal{H}\subset [-CN^d,CN^d]$ of size $\gg \eta^2 N^d$ such that for $h\in \mathcal{H}$ we have
\begin{align*}
 |\Lambda_{P_1,\ldots, P_k}^{CN^d,N}(1; \Delta_h\phi_0,\Delta_h\phi_{1,S},\Delta_hf_2,\ldots, \Delta_hf_k)|\gg \eta^2.  
\end{align*}
From Lemma~\ref{le:degree-lower} and the pigeonhole principle, we now conclude that for some constant $B_d\geq 1$, some integer $1\leq q\leq \eta^{-B_d}$ and some $M\in [\eta^{B_d}N^{\deg P_1},N^{\deg P_1}]$, we have
\begin{align*}
\mathbb{E}_{|x|\leq 2CN^d}|\mathbb{E}_{m\in [M]}\overline{\phi_{1,S}}(x+qm+h)\phi_{1,S}(x+qm)|\gg_{C,P_1,\ldots, P_k} \eta^{B_d}
\end{align*}
for $\gg_{C,P_1,\ldots, P_k}  \eta^{B_d} N^d$ integers $h\in \mathcal{H}$. Let $\mathcal{H}'$ be the set of such $h$. By~\eqref{eq:shortening} we also have 
\begin{align}\label{eq:phi1S}
\mathbb{E}_{|x|\leq 2CN^d}|\mathbb{E}_{m\in [M']}\overline{\phi_{1,S}}(x+qm+h)\phi_{1,S}(x+qm)|\gg_{C,P_1,\ldots, P_k} \eta^{B_d}
\end{align}
for $h\in \mathcal{H}'$, where $M'=\eta^{4B_d+2}N'$.

From the pigeonhole principle, we see that there exist $h_0\in [N']$ and $\mathcal{H}''\subset \mathcal{H}'$ with $|\mathcal{H}''|\gg_{C,P_1,\ldots, P_k}\eta^{4B_d}N^d$ such that $\|(h-h_0)/N'\|\leq \eta^{3B_d}$ for all $h\in \mathcal{H}'$. Let $\mathcal{X}$ be the set of $x\in [-CN^d,CN^d]\cap \mathbb{Z}$ for which
\begin{align*}
\min_{h'\in \{0,h_0\}}\left\|\frac{x+h'-a}{N'}\right\|\geq \eta^{2B_d}.    
\end{align*}
Then $|\mathcal{X}|\geq (2C-O(\eta^{2B_d}))N^d$.
Note that, for any $x\in \mathcal{X}$, $h\in \mathcal{H}''$ and $m\in [\eta^{2B_d}N'/2]$, we have
\begin{align*}
\overline{\phi_{1,S}}(x+m+h)\phi_{1,S}(x+m)=1_{\alpha_{x}\not \in S}\cdot 1_{\alpha_{x+h}\not \in S}e((\alpha_x-\alpha_{x+h})(x+m)-h\alpha_{x+h}). 
\end{align*}
Hence, recalling~\eqref{eq:phi1S} and applying the pigeonhole principle, we see that there is some $x_0\in \mathcal{X}$ with $\alpha_{x_0}\not \in S$ such that for all $h\in \mathcal{H}''$ we have $\alpha_{x_0+h}\not \in S$ and 
\begin{align*}
|\mathbb{E}_{m\in [M']}e((\alpha_{x_0}-\alpha_{x_0+h})qm)|\gg_{C,P_1,\ldots, P_k} \eta^{B_d}.
\end{align*}
By the geometric sum formula, we conclude that, for all $h\in \mathcal{H}''$, we have 
\begin{align*}
\|q(\alpha_{x_0}-\alpha_{x_0+h})\|\ll_{C,P_1,\ldots, P_k} \frac{\eta^{-B_d}}{N^{\deg P_1}}.    
\end{align*}
Recalling that $\alpha_m\in \frac{1}{N^{\deg P_1}}\mathbb{Z}$, by the pigeonhole principle we conclude that there is some constant $C_d'\geq 1$ and some $\alpha\in \frac{1}{N^{\deg P_1}}\mathbb{Z}$ such that $\alpha_m=\alpha$ for $\geq \eta^{C_d'}N^{d}$ integers $m\in [-CN^d,CN^d]$. We have $\alpha\not \in S$, since $\alpha_{x_0+h}\not \in S$ for $h\in \mathcal{H}''$. Now the claim follows by writing $\phi_{1,S}=(\phi_1-\phi_{1,\{\alpha\}})+\phi_{1,S\cup\{\alpha\}}$ and applying the pigeonhole principle. 

\textbf{Step 3: Concluding the argument.}
Now, applying repeatedly the claim established above, starting with $S=\emptyset$ (in which case the assumption~\eqref{eq:deltalambda} holds for $\eta=\delta^{C_d'}$ by \eqref{eq:deltalambda0}) and applying the above repeatedly, after $\ll \delta^{-C_d'}$ iterations~\eqref{eq:deltalambda2} cannot hold (since the number of different values that $\alpha_m$ takes at least $(\delta/2)^{C_d'}N^d$ times is $\ll_d \delta^{-C_d'}C$), so~\eqref{eq:deltalambda3} holds with $\eta=\delta^{A_d}$ for some constant $A_d$. 

 Now that~\eqref{eq:deltalambda3} holds with $\eta=\delta^{A_d}$, by the pigeonhole principle there exist intervals $[N_1,N_2]\subset [1,N]$ of length $\delta^{3A_dC_d'}N$ and $I\subset [-CN^d,CN^d]$ of length $\delta^{3A_dC_d'}N^d$ such that, denoting $\psi_{\alpha}(m)\coloneqq 1_{\alpha_m=\alpha}$, we have
\begin{align*}
|\Lambda_{P_1,\ldots, P_k}^{CN^d,N}(\theta e(\alpha P_1(\cdot))1_{[N_1,N_2]}; \phi_0e(\alpha \cdot)1_{I}, \psi_{\alpha},f_3,\ldots, f_k)|\geq \delta^{8A_dC_d'}.   
\end{align*}
But since the function $\psi_{\alpha}$ is constant on intervals of the form $[jN'+a,(j+1)N'+a)$ with $j\in \mathbb{Z}$, this implies
\begin{align*}
|\Lambda_{P_1,\ldots, P_k}^{CN^d,N}(\theta e(\alpha P_1(\cdot))1_{[N_1,N_2]}; \phi_0e(\alpha \cdot)1_{I}\psi_{\alpha}(\cdot+P_1(N_1)),f_3,\ldots, f_k)|\gg\delta^{8A_dC_d'}.   
\end{align*}
By the induction assumption and the fact that $d>\deg P_1$, for some $A_d'$ we now obtain
\begin{align*}
\|\theta 1_{[N_1,N_2]}\|_{u^{d+1}[N]}\gg_{C,P_1,\ldots, P_k} \delta^{A_d'}.
\end{align*}
By Vinogradov's Fourier expansion (Lemma~\ref{le:vinogradov}), we can write 
$$1_{[N_1,N_2]}(n)=\sum_{1\leq j\leq \delta^{-3A_d'}}c_je(\beta_j n)+E(n)$$
for some real numbers $\beta_j$, some complex numbers $|c_j|\leq 1$ and some $|E(n)|\ll 1$ satisfying $\sum_{n\in [N]}|E(n)|\ll \delta^{2A_d'}N$. Hence, we conclude that 
\begin{align*}
\|\theta\|_{u^{d+1}[N]}\gg_{C,P_1,\ldots, P_k} \delta^{O_{C,P_1,\ldots, P_k}(1)},
\end{align*}
as desired. 
\end{proof}

\subsection{Proof of Theorem~\ref{thm:GVNT1}}\label{sub:genpoly}

For the proof of Theorem~\ref{thm:GVNT1}, we need the following generalised von Neumann theorem for arithmetic progressions. This is well known (see for example~\cite[Lemma 2]{FHK}), although the result is typically presented for functions defined on a cyclic group.

\begin{lemma}[A generalised von Neumann theorem for arithmetic progressions]\label{le:GVNT0}

Let $k\in \mathbb{N}$, $N\geq 1$, $C\geq 1$, and let $\theta\:[N]\to \mathbb{C}$. Let $L_1,\ldots, L_k$ be polynomials with integer coefficients of degree at most $1$ satisfying $L_i([N])\subset [-CN,CN]$ for all $i\in [k]$. Let $f_1\ldots, f_k\colon \mathbb{Z}\to \mathbb{C}$ be functions supported on $[-CN,CN]$ with $|f_i|\leq 1$ for $1\leq i\leq k$. Then we have
\begin{align*}
\left|\frac{1}{N^2}\sum_{m\in \mathbb{Z}}\sum_{n\in [N]}\theta(n)f_1(m+L_1(n))\cdots f_k(m+L_k(n))\right|\ll_{k,C}\|\theta\|_{U^{k}[N]}.     
\end{align*}
\end{lemma}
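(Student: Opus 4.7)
The plan is to proceed by induction on $k$, using one Cauchy--Schwarz step in the $m$-variable per inductive step to reduce the progression length by one while replacing the weight $\theta$ by its multiplicative derivative $\tau_h(n) := \theta(n)\overline{\theta(n-h)}$. The base case $k=1$ is immediate: the substitution $m\mapsto m - L_1(n)$ decouples the variables and yields
\begin{equation*}
|\Sigma| = \bigl|\mathbb{E}_{n\in [N]}\theta(n)\bigr|\cdot \frac{1}{N}\Bigl|\sum_m f_1(m)\Bigr| \ll_C \|\theta\|_{U^1[N]}.
\end{equation*}

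For the inductive step I assume the claim for $k-1$ and consider $k\geq 2$. Coinciding forms $L_i = L_j$ let one merge $f_i f_j$ into a single bounded function, so after relabeling I may assume $L_1,\ldots,L_k$ are pairwise distinct. A translation making $L_k\equiv 0$ rewrites
\begin{equation*}
\Sigma = \frac{1}{N^2}\sum_m f_k(m)\, G(m), \qquad G(m) := \sum_{n\in[N]}\theta(n)\prod_{i<k} f_i(m + L_i'(n)),
\end{equation*}
where $L_i' := L_i - L_k$ is a pairwise distinct family of nonzero linear forms. Cauchy--Schwarz in $m$ (supported on $|m|\ll_{C} N$) gives $|\Sigma|^2 \ll_{k,C} N^{-3}\sum_m |G(m)|^2$. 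Expanding the square, setting $h := n - n'$, and further translating $m\mapsto m - L_{k-1}'(n)$ converts the contribution of each fixed $h$ into a $(k-1)$-term linear counting operator with weight $\tau_h$, with $k-1$ pairwise distinct linear forms $\{0\}\cup\{L_i' - L_{k-1}'\}_{i<k-1}$, and with $k-1$ new test functions each given by a product of two translates of one of the original $f_i$'s (and hence still $1$-bounded and supported on an $O_C(N)$-interval).

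Applying the inductive hypothesis to each such inner operator yields the bound $|\mathcal{E}_h| \ll_{k,C} N^2 \|\tau_h\|_{U^{k-1}[N]}$ for each $h$, whence the triangle inequality gives
\begin{equation*}
|\Sigma|^2 \ll_{k,C} \frac{1}{N}\sum_{|h|\leq N} \|\tau_h\|_{U^{k-1}[N]}.
\end{equation*}
The standard recursive Gowers identity $\sum_{h\in\mathbb{Z}} \|\tau_h\|_{\widetilde{U}^{k-1}(\mathbb{Z})}^{2^{k-1}} = \|\theta\|_{\widetilde{U}^k(\mathbb{Z})}^{2^k}$ together with H\"older's inequality controls the right-hand side by $\ll N \|\theta\|_{U^k[N]}^2$, concluding $|\Sigma|\ll_{k,C} \|\theta\|_{U^k[N]}$. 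The main obstacle will be bookkeeping: verifying after the two translations and the expansion of $|G|^2$ that the resulting inner operator really falls under the inductive hypothesis (distinct forms, $1$-bounded functions with $O_C(N)$-support), and reconciling the $\widetilde{U}^s(\mathbb{Z})$- and $U^s[N]$-normalizations when invoking H\"older in the final step.
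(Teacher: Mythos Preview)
Your proof is correct and follows essentially the same approach as the paper: induction on $k$, a Cauchy--Schwarz step in the $m$-variable to eliminate one function, differencing in $n$ to introduce the shift $h$, then the inductive hypothesis followed by H\"older to pass from $\mathbb{E}_h\|\Delta_h\theta\|_{U^{k-1}[N]}$ to $\|\theta\|_{U^k[N]}^2$. The only cosmetic differences are that the paper packages your Cauchy--Schwarz plus square-expansion step as an application of van der Corput's inequality (Lemma~\ref{le:vdc}), and that it proves the statement as the $r=1$ case of a multidimensional version (Lemma~\ref{le:GVNT-multi}) needed elsewhere.
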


Lemma~\ref{le:GVNT0} could be proven directly without difficulty, but we deduce it as an immediate consequence of the following more general lemma that deals with multidimensional averages. This multidimensional version is needed for proving Theorem~\ref{thm_commuting} (but for Theorem~\ref{thm:GVNT1} the one-dimensional case suffices).

\begin{lemma}[A multidimensional generalised von Neumann theorem for arithmetic progressions]\label{le:GVNT-multi}
 Let $k,r\in \mathbb{N}$, $N\geq 1$, $C\geq 1$, and let $\theta\:[N]\to \mathbb{C}$. Let $L_1,\ldots, L_k\colon \mathbb{Z}\to \mathbb{Z}^r$ be polynomials with integer coefficients of degree at most $1$ satisfying $L_i([N])\subset [-CN,CN]^r$ for all $i\in [k]$. Let $f_1\ldots, f_k\colon \mathbb{Z}^r\to \mathbb{C}$ be functions supported on $[-CN,CN]^r$ with $|f_i|\leq 1$ for $1\leq i\leq k$. Then we have
\begin{align}\label{eq:GVNT0-1}\begin{split}
&\left|\frac{1}{N^{r+1}}\sum_{\mathbf{m}\in \mathbb{Z}^r}\sum_{n\in [N]}\theta(n)f_1(\mathbf{m}+L_1(n))\cdots f_k(\mathbf{m}+L_k(n))\right|\\
&\quad \ll_{k,r,C}\|\theta\|_{U^{k}[N]}.     
\end{split}
\end{align}   
\end{lemma}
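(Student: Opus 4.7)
The plan is to induct on $k$. The base case $k=1$ is immediate: since the $\mathbf{m}$-sum is translation-invariant, $\sum_{\mathbf{m}\in\mathbb{Z}^r}f_1(\mathbf{m}+L_1(n))=\sum_{\mathbf{m}}f_1(\mathbf{m})=O_{C,r}(N^r)$ independently of $n$, and the left-hand side of~\eqref{eq:GVNT0-1} collapses to $N^{-1}|\sum_n\theta(n)|=\|\theta\|_{U^1[N]}$.

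For the inductive step, write $L_i(n)=\mathbf{a}_in+\mathbf{b}_i$ and perform a change of variables $\mathbf{m}\mapsto \mathbf{m}-L_k(n)$ to reduce to the case $L_k\equiv 0$, replacing each $L_i$ by $\widetilde{L}_i\coloneqq L_i-L_k$ for $i<k$, so that $f_k$ no longer depends on $n$. Denoting the left-hand side of~\eqref{eq:GVNT0-1} by $\mathcal{I}$, one writes $\mathcal{I}=N^{-(r+1)}\sum_{\mathbf{m}}f_k(\mathbf{m})G(\mathbf{m})$ with $G(\mathbf{m})\coloneqq \sum_n\theta(n)\prod_{i<k}f_i(\mathbf{m}+\widetilde{L}_i(n))$; the Cauchy--Schwarz inequality in $\mathbf{m}$, together with $\|f_k\|_2^2\ll_{C,r}N^r$ and the observation that $G$ is supported on a box of side $O_C(N)$, gives $|\mathcal{I}|^2\ll_{C,r}N^{-(r+2)}\|G\|_2^2$.

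Expanding $\|G\|_2^2$ as a triple sum in $n,n',\mathbf{m}$, substituting $h=n-n'$, and performing a further change of variables $\mathbf{m}\mapsto \mathbf{m}-\widetilde{L}_1(n')$ (to centre at $L_1$) rewrites the inner expression, for each fixed $h$, in the form
\begin{equation*}
\sum_{n',\mathbf{m}}\overline{\Delta_h\theta(n')}\prod_{i=1}^{k-1}g_i^h\bigl(\mathbf{m}+(L_i-L_1)(n')\bigr),
\end{equation*}
where $g_i^h(\mathbf{x})\coloneqq f_i(\mathbf{x}+(\mathbf{a}_i-\mathbf{a}_k)h)\overline{f_i(\mathbf{x})}$ is $1$-bounded and supported on a box of side $O_C(N)$, and the $i=1$ factor is independent of $n'$ since $L_1-L_1\equiv 0$. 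This is an instance of the lemma with $k-1$ functions, polynomials $L_i-L_1$ of degree $\leq 1$, and weight $\overline{\Delta_h\theta}$ on $[N]$; the inductive hypothesis bounds it by $\ll_{C,k,r}N^{r+1}\|\Delta_h\theta\|_{U^{k-1}[N]}$ for each fixed $h$.

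Summing over $|h|<N$ gives $|\mathcal{I}|^2\ll_{C,k,r}N^{-1}\sum_{|h|<N}\|\Delta_h\theta\|_{U^{k-1}[N]}$. The recursion $\|f\|_{\widetilde{U}^k(\mathbb{Z})}^{2^k}=\sum_h\|\Delta_hf\|_{\widetilde{U}^{k-1}(\mathbb{Z})}^{2^{k-1}}$ combined with $\|1_{[N]}\|_{\widetilde{U}^s(\mathbb{Z})}^{2^s}\asymp_s N^{s+1}$ yields the cube identity $\|\theta\|_{U^k[N]}^{2^k}\asymp_k N^{-1}\sum_{|h|<N}\|\Delta_h\theta\|_{U^{k-1}[N]}^{2^{k-1}}$, and H\"older's inequality with exponent $2^{k-1}$ then gives $\sum_{|h|<N}\|\Delta_h\theta\|_{U^{k-1}[N]}\ll_k N\|\theta\|_{U^k[N]}^{2}$; substitution produces $|\mathcal{I}|\ll_{C,k,r}\|\theta\|_{U^k[N]}$, closing the induction. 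The main technical point is the pairing of the $2(k-1)$ differenced factors emerging from the Cauchy--Schwarz expansion into the $k-1$ functions $g_i^h$, which (together with the centering change of variables) ensures that the inner sum is of the exact form required to apply the inductive hypothesis.
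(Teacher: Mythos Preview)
Your proof is correct and follows essentially the same route as the paper's: induction on $k$, a change of variables to make one $f_i$ independent of $n$, Cauchy--Schwarz in $\mathbf{m}$, expansion of the square in the $n$-variable to produce a differenced configuration of length $k-1$, application of the inductive hypothesis to each fixed shift $h$, and finally H\"older together with the recursion for Gowers norms to pass from $\mathbb{E}_h\|\Delta_h\theta\|_{U^{k-1}[N]}$ to $\|\theta\|_{U^k[N]}^2$. The only cosmetic differences are that the paper centres at $L_1$ rather than $L_k$ (so no second change of variables is needed after the expansion) and packages the ``expand the square and substitute $h=n-n'$'' step as an appeal to van der Corput's inequality (Lemma~\ref{le:vdc}), which yields the triangular weight $\mu_N(h)$ in the $h$-sum; your direct expansion and the paper's van der Corput formulation are equivalent up to constants.
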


\begin{proof}[Proof of Lemma~\ref{le:GVNT-multi}]
For convenience, we extend the definition of $\theta$ to all of $\mathbb{Z}$ by setting it equal to $0$ outside $[N]$. 

We use induction on $k$. In the case $k=1$, the claim is immediate by making the change of variables $\mathbf{m}'=\mathbf{m}+n\mathbf{v}_1+\mathbf{b}_1$ and noting that $\mathbb{E}_{n\in [N]}\theta(n)=\|\theta\|_{U^1[N]}$. 

Suppose then that the claim holds in the case $k-1\in \mathbb{N}$ and consider the case $k$. Let $S$ be the expression inside the absolute values in~\eqref{eq:GVNT0-1}. By making the change of variables $\mathbf{m}'=\mathbf{m}+L_1(n)$, we have
\begin{align*}
S=\frac{1}{N^{r+1}}\sum_{\mathbf{m}'\in \mathbb{Z}^r}\sum_{n\in [N]}\theta(n)f_1(\mathbf{m}')\prod_{j=2}^k f_j(\mathbf{m}'+L_j(n)-L_1(n)).    
\end{align*}
By the Cauchy--Schwarz inequality, we obtain
\begin{align*}
|S|^2\ll_{r,C} \frac{1}{N^{r+2}}\sum_{\mathbf{m}\in \mathbb{Z}^{r}}\left|\sum_{n\in [N]}\theta(n)\prod_{j=2}^k f_j(\mathbf{m}+L_j(n)-L_1(n))\right|^2.     
\end{align*}

In what follows, for a function $f\colon \mathbb{Z}^r\to \mathbb{C}$ and $\mathbf{v}\in \mathbb{Z}^r$, denote $\Delta_{h}^\mathbf{v}f(x)\coloneqq \overline{f(x+h\mathbf{v})}f(x)$. 
From van der Corput's inequality (Lemma~\ref{le:vdc}), we conclude that
\begin{align*}
|S|^2\ll_{r,C} \frac{1}{N^{r+1}}\sum_{h\in \mathbb{Z}}\mu_N(h)\sum_{\mathbf{m}\in \mathbb{Z}^r}\sum_{n\in [N]}\Delta_h\theta(n)\prod_{j=2}^k \Delta_h^{\mathbf{v}_j}f_j(\mathbf{m}+L_j(n)-L_1(n)),\end{align*}
where $\mathbf{v}_j\in \mathbb{Z}^r$ is such that $L_j(n)-L_1(n)=n\mathbf{v}_j+L_j(0)-L_1(0)$.
By the induction assumption and the fact that $|\mu_N(h)|\ll \frac{1}{N}1_{|h|\leq N}$, we see that
\begin{align}\label{eq:Deltah}
|S|^2\ll_{k,r, C} \mathbb{E}_{h\in [-N,N]}\|\Delta_h \theta\|_{U^{k-1}[N]}.   
\end{align}
But by H\"older's inequality and the definition of the $U^{k-1}[N]$ norm, the right-hand side of~\eqref{eq:Deltah} is
\begin{align*}
&\ll_{k,C} \left(\mathbb{E}_{h\in [-N,N]}\mathbb{E}_{n,h_1,\ldots, h_{k-1}\in [-N,N]}\prod_{\omega\in \{0,1\}^{k-1}}\mathcal{C}^{|\omega|}\Delta_h\theta(n+\omega\cdot (h_1,\ldots, h_{k-1}))\right)^{1/2^{k-1}}\\
&\ll_k\|\theta\|_{U^k[N]}^2. 
\end{align*}
This completes the induction.
\end{proof}

\begin{proof}[Proof of Theorem~\ref{thm:GVNT1}]

For convenience, we extend the definition of $\theta$ to all of $\mathbb{Z}$ by setting it equal to $0$ outside $[N]$. 

We apply the PET induction scheme of Bergelson and Leibman~\cite{bl}. For any finite collection $\mathcal{Q}$ of polynomials, define its \emph{type} as $(d,w_d,\ldots, w_1)$, where $w_j$ is the number of different leading coefficients among the polynomials in the subcollection $\{Q\in \mathcal{Q}\: \deg Q=j\}$. We introduce the lexicographic order $<$ on the types of polynomials. In other words, we write $(d,w_d,\ldots, w_1)<(d',w_{d'}',\ldots, w_{1}')$ if there exists $j\geq 0$ such that the first $j$ coordinates of the vectors are equal and the coordinate of order $j+1$ is larger for the second vector. In this way, we have introduced an order $<$ on the set of all finite collections of polynomials based on the order of their type. Note that the length of any descending chain of collections of polynomials with maximal element $\mathcal{Q}$ is bounded as a function of $|\mathcal{Q}|$ and $\deg \mathcal{Q}$.

We shall prove Theorem~\ref{thm:GVNT1} by induction on the type of  $\mathcal{Q}$.
The base case is that of collections of type $(1,k)$ with $k\in \mathbb{N}$, that is, collections where all the polynomials have degree at most $1$. This case follows readily from Lemma~\ref{le:GVNT0}.  

Suppose that $\mathcal{Q}$ is a finite collection of polynomials for which Theorem~\ref{thm:GVNT1} fails and that there is no smaller collection $\mathcal{Q}'<\mathcal{Q}$ with this property. Then $\deg \mathcal{Q}\geq 2$. Let us write $\mathcal{Q}=\{Q_1,\ldots, Q_k\}$, where $Q_1$ is one of the polynomials of $\mathcal{Q}$ with the least positive degree.

Let $S$ be the left-hand side of~\eqref{eq:GVNT1-1poly}. By the Cauchy--Schwarz inequality, van der Corput's inequality~\eqref{eq:vdc} and a change of variables, we have
\begin{align}\label{eq:PET}\begin{split}
|S|^2&\ll \frac{1}{N^{d+1}}\sum_{h\in \mathbb{Z}}\mu_N(h)\sum_{m\in \mathbb{Z}}\sum_{n\in [N]}\Delta_h\theta(n)\prod_{Q\in \mathcal{Q}}\Delta_h f_Q(m+Q(n))\\
&=\frac{1}{N^{d+1}}\sum_{h\in \mathbb{Z}}\mu_N(h)\sum_{m\in \mathbb{Z}}\sum_{n\in [N]}\Delta_h\theta(n)\prod_{Q\in \mathcal{Q}}f_{Q}(m+Q(n+h)-Q_1(n))\overline{f_Q}(m+Q(n)-Q_1(n))\\
&=\frac{1}{N^{d+1}}\sum_{h\in \mathbb{Z}}\mu_N(h)\sum_{m\in \mathbb{Z}}\sum_{n\in [N]}\Delta_h\theta(n)\prod_{\widetilde{Q}\in \mathcal{Q}_h}\widetilde{f}_{\widetilde{Q}}(m+\widetilde{Q}(n)),\end{split}
\end{align}
where $\widetilde{f}_{\widetilde{Q}}$ are some $1$-bounded functions and $\mathcal{Q}_h$ is the collection (of size $\leq 2|\mathcal{Q}|$ and degree $\leq \deg \mathcal{Q}$) given by
\begin{align*}
\mathcal{Q}_h=\{Q_j(y+h)-Q_1(y), Q_j(y)-Q_1(y)\}_{j=1}^{k}.   \end{align*}
We claim that for every $h\in \mathbb{Z}$ the type of $\mathcal{Q}_h$ is less than the type of $\mathcal{Q}$. Let $d_1=\deg Q_1$. Let $(d,w_d,\ldots, w_1)$ be the type of $\mathcal{Q}$ and let $(d_h,w_{d,h},\ldots, w_{1,h})$ be the type of $\mathcal{Q}_h$. We have $d_h\leq d$, and if $d_h=d$, then $w_{e}=w_{e,h}$ for $d_1<e\leq d$. Moreover, $w_{d_1,h}<w_{d_1}$, since if $c_1,\ldots, c_r$ are the distinct leading coefficients of the degree $d_1$ polynomials in $\mathcal{Q}$, the leading coefficients of degree $d_1$ polynomials in $\mathcal{Q}_h$ are $c_2-c_1,\ldots, c_r-c_1$. Hence we have $\mathcal{Q}_h<\mathcal{Q}$.

By~\eqref{eq:PET}, the estimate $|\mu_N(h)|\ll \frac{1}{N}1_{|h|\leq N}$  and the pigeonhole principle, we have
\begin{align*}
|S|^2&\ll \mathbb{E}_{h\in [-N,N]}\max_{|j|\leq (C+1)N^{d-d_h}}\left|\frac{1}{N^{d_h+1}}\sum_{m\in \mathbb{Z}}1_{(jN^{d_h},(j+1)N^{d_h}]}(m)\sum_{n\in [N]}\Delta_h\theta(n)\prod_{\widetilde{Q}\in \mathcal{Q}_h}\widetilde{f}_{\widetilde{Q}}(m+\widetilde{Q}(n))\right|\\
&= \mathbb{E}_{h\in [-N,N]}\max_{|j|\leq (C+1)N^{d-d_h}}\left|\frac{1}{N^{d_h+1}}\sum_{m'\in \mathbb{Z}}\sum_{n\in [N]}\Delta_h\theta(n)1_{[N^{d_h}]}(m')\prod_{\widetilde{Q}\in \mathcal{Q}_h}\widetilde{f}_{\widetilde{Q},jN^{d_h}}(m'+\widetilde{Q}(n))\right|,
\end{align*}
where $\widetilde{f}_{\widetilde{Q},a}=\widetilde{f}_{\widetilde{Q}}(\cdot+a)$. 

Since by assumption $Q([N])\subset [-CN^d,CN^d]$, basic linear algebra gives that the coefficients of $Q$ are $\ll_{C,d}1$ in modulus. Then, by the mean value theorem, for any $\widetilde{Q}\in \mathcal{Q}_h$ we have 
$$\max_{n\in [N]}|\widetilde{Q}(n)|\ll |\widetilde{Q}(1)|+N\max_{y\in [1,N]}|\widetilde{Q}'(y)|\ll_{\deg \mathcal{Q},C}N^{d_h},$$
so for all $n\in [N]$ we have $\widetilde{Q}(n)\in [-C'N^{d_h},C'N^{d_h}]$ for some $C'\ll_{\deg \mathcal{Q},C}1$. Now, by the induction assumption (and the fact that the functions $\widetilde{f}_{Q,jN^d_h}$ may be assumed to be supported on $[-(C'+1)N^{d_h},(C'+1)N^{d_h}]$), we conclude that for some natural number $s'\ll_{|\mathcal{Q}|,\deg{Q}}$ we have 
\begin{align*}
|S|^2\ll_{|\mathcal{Q}|, \deg \mathcal{Q},C} \mathbb{E}_{h\in [-N,N]}\|\Delta_h \theta\|_{U^{s'}[N]}. 
\end{align*}
Applying H\"older's inequality as in the proof of Lemma~\ref{le:GVNT0}, this implies that
\begin{align*}
|S|\ll_{|\mathcal{Q}|,\deg{Q},C} \|\theta\|_{U^{s'+1}[N]}.  \end{align*}
This completes the induction.
\end{proof}

\section{Quantitative uniformity of multiplicative functions}\label{sec:lemmas}

In this section, we give quantitative bounds for the uniformity norms of the M\"obius function and other multiplicative functions that we need for the proofs of the main theorems.

\subsection{The M\"obius function}

The only arithmetic property we need of the M\"obius function is encapsulated in the following recent result of Leng~\cite{lengII}, building on~\cite{LSS} and improving on~\cite{TT-JEMS}.

\begin{lemma}[Quantitative $U^k$-uniformity of $\mu$]\label{le:leng} Let $k\in \mathbb{N}$, $A>0$ and $N\geq 3$. Then we have
    \begin{align*}
     \|\mu\|_{U^k[N]}\ll_A (\log N)^{-A}.   
    \end{align*}
\end{lemma}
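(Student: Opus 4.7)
The plan is to follow the by-now standard two-step scheme, valid for any $1$-bounded multiplicative function exhibiting sufficient non-correlation with nilsequences: a quantitative inverse theorem for the $U^k[N]$ norm, combined with a quantitative Möbius--nilsequence orthogonality estimate. Concretely, I would argue by contrapositive: suppose $\|\mu\|_{U^k[N]} \geq \delta$, and aim to deduce an upper bound on $\delta$ of the form $(\log N)^{-A}$.

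First, I would invoke the quantitative $U^k[N]$-inverse theorem of Leng--Sah--Sawhney~\cite{LSS}. Applied to the $1$-bounded function $\mu 1_{[N]}$, it yields a filtered nilmanifold $G/\Gamma$ of degree at most $k-1$, dimension and complexity bounded by $\exp((\log 1/\delta)^{O_k(1)})$, a polynomial sequence $g\colon \mathbb{Z}\to G$, and a Lipschitz function $F\colon G/\Gamma\to\mathbb{C}$ with $\|F\|_{\Lip}\leq 1$, for which
\[
\left|\frac{1}{N}\sum_{n\in [N]}\mu(n)\overline{F(g(n)\Gamma)}\right|\;\gg\; \exp(-(\log 1/\delta)^{O_k(1)}).
\]
Second, I would apply the quantitative Möbius--nilsequence orthogonality estimate of Leng~\cite{lengII}. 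This asserts that for any nilsequence $n\mapsto F(g(n)\Gamma)$ of the above complexity and any $A'>0$,
\[
\left|\frac{1}{N}\sum_{n\in [N]}\mu(n)\overline{F(g(n)\Gamma)}\right|\;\ll_{A'}\;(\log N)^{-A'},
\]
with a loss that depends only polynomially on the complexity of the nilsequence. Combining the two estimates and choosing $A'=A+1$, one obtains $\exp(-(\log 1/\delta)^{O_k(1)})\ll_A (\log N)^{-A-1}$, which rearranges to the desired bound $\delta\ll_A (\log N)^{-A}$.

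For Step 1, the proof of the inverse theorem in~\cite{LSS} proceeds by Cauchy--Schwarz arguments combined with an inductive structure theorem for approximate polynomial phases, improving the previous tower-type bounds of~\cite{TT-JEMS} down to quasipolynomial. For Step 2, the proof of the Möbius--nilsequences estimate proceeds through a Vaughan-type type I/type II decomposition of $\mu$, followed by an appeal to quantitative equidistribution results on nilmanifolds (Green--Tao, with the refinements needed to push the complexity dependence through in~\cite{lengII}).

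The main obstacle is the quantitative matching between the two steps: the inverse theorem only outputs a nilsequence of quasipolynomial (rather than polynomial) complexity in $1/\delta$, which is why the Möbius--nilsequence estimate must be strong enough to tolerate nilsequences whose complexity grows like $\exp((\log\log N)^{O_k(1)})$ while still saving arbitrary powers of $\log N$. This is precisely the quantitative threshold achieved in~\cite{lengII}, and it is what makes a $(\log N)^{-A}$ bound accessible for every fixed $A>0$; any weaker quantitative dependence in either ingredient would only yield a fixed power of $\log\log N$ saving or worse.
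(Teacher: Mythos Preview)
Your two-step contrapositive is indeed the strategy that underlies Leng's result, but the rearrangement you perform is incorrect. From
\[
\exp\bigl(-(\log 1/\delta)^{C_k}\bigr)\ \ll_A\ (\log N)^{-A-1}
\]
with the quasipolynomial exponent $C_k>1$ coming out of~\cite{LSS}, one only obtains $\log(1/\delta)\gg ((A+1)\log\log N)^{1/C_k}$, hence
\[
\delta\ \ll\ \exp\bigl(-c_A(\log\log N)^{1/C_k}\bigr),
\]
which is strictly weaker than $(\log N)^{-A}$ for every fixed $A$. The quasipolynomial loss enters not only through the complexity of the output nilmanifold (which you correctly flag) but also through the correlation \emph{lower bound} supplied by the inverse theorem, and no fixed choice of $A'$ can absorb the latter. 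What actually makes the argument close in~\cite{lengII} is that the M\"obius--nilsequence estimate saves a small \emph{power of $N$} on the equidistributed (``minor arc'') portion of the polynomial sequence, not merely an arbitrary power of $\log N$; the non-equidistributed (``major arc'') portion produces a Siegel-type correction term which must be subtracted off and bounded separately. You state the orthogonality input only in the $(\log N)^{-A'}$ form and do not mention the Siegel correction at all.

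The paper's proof sidesteps this entirely: it directly cites~\cite[Theorem~6]{lengII}, which already packages the $U^k[N]$ bound up to a Siegel correction, then invokes~\cite[Theorem~2.5]{TT-JEMS} together with Siegel's ineffective bound $q_{\textnormal{Siegel}}\gg_A(\log N)^A$ to control that correction, and concludes by the triangle inequality for the Gowers norms.
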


\begin{proof}
This follows by combining Leng's result~\cite[Theorems 6]{lengII} with \cite[Theorem 2.5]{TT-JEMS} (where we can use Siegel's bound $q_{\textnormal{Siegel}}\gg_A (\log N)^{A}$) and the triangle inequality for the Gowers norms. 
\end{proof}

\begin{remark}\label{rmk:vin} We mention that the weaker bound $\|\mu\|_{u^k[N]}\ll_{A} (\log N)^{-A}$, which turns out to be all that we need for the proof of Theorem~\ref{thm_polyergodic} in the case of distinct degree polynomials, is much simpler to prove. Indeed, it follows from the method to bilinear exponential sums developed by Vinogradov~\cite{vinogradov}.  
\end{remark}

\begin{remark}\label{rmk-liouville}
Lemma~\ref{le:leng} continues to hold if the M\"obius function $\mu$ is replaced with the Liouville function $\lambda$. This follows easily from the identities
\begin{align*}
\lambda(n)=\sum_{d\geq 1}\mu(n/d)1_{d^2\mid n},\quad  \textnormal{and}\quad \mu(n/d)1_{d^2\mid n}=\sum_{\substack{n=en'\\d^2\mid e\mid d^{\infty}\\(d,n')=1}}\mu(e/d)\mu(n'),  
\end{align*}
which can be truncated to $d^2\leq e\leq K$ for any $K\geq 1$ at the cost of an error term that is bounded by $O(1/K)$ in $L^1[N]$ norm. 
\end{remark}

\subsection{Multiplicative functions satisfying the Siegel--Walfisz condition}\label{sec:Siegel}

Our goal in this subsection is to show that any multiplicative function satisfying the Siegel--Walfisz assumption (Definition~\ref{def:SW}) is close to a function whose $u^k[N]$ norm decays faster than any power of logarithm.

\begin{proposition}\label{prop:SW}
Let $k\in \mathbb{N}$ and $A>0$. Let $g\colon \mathbb{N}\to \mathbb{C}$ be a multiplicative function satisfying the Siegel--Walfisz property. Then we have a decomposition $g=g_1+g_2$ with $g_1,g_2\colon \mathbb{N}\to \mathbb{C}$ satisfying $|g_1|, |g_2|\leq |g|$ and such that for $N\geq 3$ we have
\begin{align*}
\|g_1\|_{u^k[N]}\ll_{A,k} (\log N)^{-A}.      
\end{align*}
and for $r\geq 1$ we have
\begin{align*}
\mathbb{E}_{n\leq N}|g_2(n)|^{r}\ll_r (\log N)^{-1+o(1)}.    
\end{align*}
\end{proposition}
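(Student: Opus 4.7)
My plan is to decompose $g$ via a sieving parameter. Fix $X_1 := (\log N)^B$ for some $B = B(A,k)$ to be chosen later, and take $X_2$ with $\log X_1/\log X_2 = (\log N)^{-1+o(1)}$ (e.g., $X_2 := N^{1/(\log\log N)^2}$), so that by Mertens' theorem $\prod_{X_1 \leq p \leq X_2}(1-1/p) = (\log N)^{-1+o(1)}$. Define $\mathcal{A}$ to be the set of $n \leq N$ having no prime factor in $[X_1, X_2]$ (possibly intersected with a further ``typical factorization'' constraint, such as $\omega(n) \leq 2\log\log N$, to control $|g|^r$ on average), and set $g_2(n) := g(n) 1_{\mathcal{A}}(n)$ and $g_1 := g - g_2$. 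The pointwise bounds $|g_1|, |g_2| \leq |g|$ are automatic. The $L^r$ bound on $g_2$ then follows from combining the Mertens sift density with a Shiu/Nair--Tenenbaum-type mean value bound for the divisor-bounded multiplicative function $|g|^r$ restricted to the sifted set.

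The harder task is to show $|\sum_{n \leq N} g_1(n) e(-Q(n))| \ll_A N(\log N)^{-A}$ for every polynomial $Q$ of degree $\leq k-1$. Since the support of $g_1$ consists of integers having a prime factor in $[X_1, X_2]$, a bilinear structure is available. Using Ramar\'e's identity (or, equivalently, working first with the logarithmic weight $\rho(n) := \sum_{p | n,\, p \in [X_1, X_2]} \log p$ and recovering the unweighted sum by partial summation), the sum $\sum_n g_1(n) e(-Q(n))$ is essentially transformed into
\begin{align*}
\sum_{p \in [X_1, X_2]} g(p) \sum_{\substack{m \leq N/p \\ (m,p)=1}} g(m) \, e(-Q(pm)),
\end{align*}
modulo an $O(N/X_1)$ error coming from $n$ with $p^2 | n$. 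Applying Cauchy--Schwarz in the prime variable $p$ yields the bilinear form
\begin{align*}
\sum_{m_1, m_2} g(m_1) \overline{g(m_2)} \sum_{p \in [X_1, X_2]} e(R_{m_1, m_2}(p)), \qquad R_{m_1, m_2}(p) := Q(pm_2) - Q(pm_1),
\end{align*}
whose inner sum is an exponential sum over primes with polynomial phase of degree $\leq k-1$.

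The main obstacle will be bounding this inner sum $|\sum_{p \in [X_1, X_2]} e(R(p))|$ by $|X_2 - X_1|(\log N)^{-A'}$ uniformly for typical $(m_1, m_2)$. I would perform a classical Vinogradov-style major/minor arc analysis on the coefficients of $R$: on major arcs (coefficients close to rationals with denominators $\leq (\log N)^C$), the Siegel--Walfisz hypothesis, combined with standard technology linking $g$ on arithmetic progressions to prime exponential sums, provides the cancellation; on minor arcs, iterated Weyl differencing together with Vinogradov's bilinear method for prime polynomial exponential sums gives the required saving in $\log N$. The delicate point is verifying that, apart from a sparse exceptional set of pairs $(m_1, m_2)$, the polynomial $R_{m_1, m_2}$ lands on a minor arc; this demands a Weyl-type estimate on the number of $(m_1, m_2)$ whose associated leading coefficient admits a good rational approximation, and is where the polynomial-phase aspect of the argument is most subtle. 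Choosing $B = B(A,k)$ sufficiently large and absorbing the diagonal $m_1 = m_2$ contribution (negligible since $X_1$ is polylogarithmic) then completes the proof.
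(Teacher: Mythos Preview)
Your overall strategy---restrict to integers with a prime factor in a medium range, extract bilinear structure via Ramar\'e's identity, then combine a bilinear exponential sum estimate with the Siegel--Walfisz hypothesis---is indeed the paper's route. Two points, however, separate your sketch from a proof.

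First, the decomposition must be independent of $N$: the statement asserts a single pair $g_1,g_2\colon\mathbb{N}\to\mathbb{C}$ for which the bounds hold for every $N\ge 3$ (and this is how the proposition is used downstream, where one needs $\|g_1\|_{u^{d+1}[M]}\ll_B(\log M)^{-B}$ for all $M$). Your thresholds $X_1=(\log N)^B$ and $X_2=N^{1/(\log\log N)^2}$ depend on $N$, so you have not defined $g_1,g_2$ at all. The paper resolves this by making the thresholds depend on $n$ rather than $N$: it sets $Q_n=\exp((\log\log n)^2)$, $R_n=\exp((\log n)/(\log\log n)^2)$ and $g_1(n)=g(n)\,1_{\exists\, p\in(Q_n,R_n):\ p\mid n}$. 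This is a small but essential trick.

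Second, your major-arc handling is where the proposal diverges most from the paper and becomes murky. You perform the major/minor split on $R_{m_1,m_2}$ \emph{after} Cauchy--Schwarz, and then invoke ``the Siegel--Walfisz hypothesis, combined with standard technology linking $g$ on arithmetic progressions to prime exponential sums.'' But after Cauchy--Schwarz the inner sum $\sum_p e(R_{m_1,m_2}(p))$ no longer carries $g$; to recover cancellation you would have to evaluate it via the prime number theorem in progressions and then feed the resulting local factor (depending on $m_1,m_2\bmod q$) back into the outer sum $\sum_{m_1,m_2}\tilde g(m_1)\overline{\tilde g(m_2)}$, where $\tilde g(m)=g(m)/(1+\omega_{[X_1,X_2]}(m))$ carries the Ramar\'e weight. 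You now need Siegel--Walfisz for $\tilde g$, not for $g$, and this is not automatic. The paper avoids this tangle by doing the major/minor split on the original polynomial $Q$: a black-box bilinear estimate for $\sum_{ab\le N}\alpha_a\beta_b\,e(P(ab))$ yields the dichotomy ``either the correlation is small or $Q$ lies on a major arc,'' and in the latter case one returns to the undifferenced sum $\sum_n g_1(n)e(-Q(n))$, splits into short intervals and residue classes, and applies Siegel--Walfisz for $g_1$. That last step---showing $g_1$ itself satisfies Siegel--Walfisz---is a nontrivial lemma proved separately via a fundamental-lemma sieve approximation of the indicator $1_{p\mid n\Rightarrow p\notin(Q_n,R_n)}$, and is an ingredient your sketch does not anticipate.
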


Throughout this section, let
\begin{align*}
Q_n\coloneqq\exp((\log \log n)^2),\quad R_n\coloneqq \exp((\log n)/(\log \log n)^2)    
\end{align*}
for $n\geq 3$ and $Q_1=Q_2=R_1=R_2=1$, and define the function
\begin{align}\label{eq:gtilde}
\widetilde{g}(n)\coloneqq g(n)(1-1_{p\mid n\implies p\not \in (Q_n,R_n)}).
\end{align}
In other words, $\widetilde{g}$ is the restriction of $g$ to those integers having at least one prime factor from $(Q_n,R_n)$. The following lemma shows that the function $\widetilde{g}$ is close to $g$ in $L^r[N]$ norm.

\begin{lemma}\label{le:Lr}
Let $C>0$, $r\geq 1$, and let $g\colon \mathbb{N}\to \mathbb{C}$ satisfy $|g(n)|\leq d(n)^{C}$ for all $n\in \mathbb{N}$. Then for $N\geq 3$ we have
\begin{align}\label{eq:f-f}
\mathbb{E}_{n\leq N}|g(n)-\widetilde{g}(n)|^r\ll_{C,r} (\log N)^{-1+o(1)}. \end{align}
\end{lemma}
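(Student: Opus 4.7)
The plan is to exploit the fact that $g(n)-\widetilde{g}(n)$ is supported on integers with no prime factor in the window $(Q_n,R_n)$, thereby reducing the claim to an upper bound for a sifted sum of the multiplicative function $d(n)^{Cr}$, and then to apply a standard Shiu-type estimate for sums of multiplicative functions on sifted sets.

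First I would combine \eqref{eq:gtilde} with the divisor bound hypothesis to record the pointwise inequality
\[
|g(n)-\widetilde{g}(n)|^{r} \,\leq\, d(n)^{Cr}\cdot 1_{p\mid n\implies p\notin (Q_n,R_n)}.
\]
The contribution from $n\leq N^{1/2}$ is at most $N^{-1/2}(\log N)^{O_{C,r}(1)}$ after dividing by $N$, by the classical moment estimate $\sum_{n\leq X}d(n)^{Cr}\ll X(\log X)^{2^{Cr}-1}$, so it is negligible.  For $N^{1/2}<n\leq N$, both $n\mapsto Q_n$ and $n\mapsto R_n$ are increasing (for $n$ large), hence $(Q_N, R_{N^{1/2}})\subseteq (Q_n,R_n)$; the enlarged interval is non-empty for $N$ large, since $\log Q_N=(\log\log N)^{2}$ whereas $\log R_{N^{1/2}}\gg (\log N)/(\log\log N)^{2}$.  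Setting $Q:=Q_N$, $R:=R_{N^{1/2}}$, and $P:=\prod_{Q<p\leq R}p$, the indicator above is then dominated by $1_{(n,P)=1}$.

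Finally, a Shiu-type upper bound applied to the multiplicative function $n\mapsto d(n)^{Cr}\,1_{(n,P)=1}$ yields
\[
\sum_{\substack{n\leq N\\ (n,P)=1}}d(n)^{Cr}\;\ll\; \frac{N}{\log N}\exp\!\left(\sum_{\substack{p\leq N\\ p\notin (Q,R)}}\frac{2^{Cr}}{p}\right)\;\ll\;\frac{N}{\log N}\left(\frac{(\log N)\log Q}{\log R}\right)^{2^{Cr}},
\]
and since $(\log N)(\log Q)/\log R\asymp (\log\log N)^{4}$, the right-hand side is $\ll N(\log N)^{-1}(\log\log N)^{O_{C,r}(1)}$.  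Dividing by $N$ gives the claimed bound $(\log N)^{-1+o(1)}$.

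The only real obstacle is the bookkeeping needed to invoke the Shiu-type bound with the correct parameters.  If one prefers a self-contained alternative, one can factor each sifted $n$ coprimely as $n=ab$ with $a$ being $Q$-smooth and $b$ being $R$-rough, and combine the two standard Euler-product estimates $\sum_{P^{+}(a)\leq Q}d(a)^{Cr}/a\ll(\log Q)^{2^{Cr}}$ and $\sum_{b\leq X,\,P^{-}(b)>R}d(b)^{Cr}\ll X(\log X)^{2^{Cr}-1}/(\log R)^{2^{Cr}}$ from the Hall--Tenenbaum theory of sums of multiplicative functions.
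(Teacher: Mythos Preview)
Your proof is correct and follows essentially the same route as the paper: bound $|g(n)-\widetilde g(n)|^r$ by $d(n)^{Cr}$ times the sifted indicator, replace the $n$-dependent interval $(Q_n,R_n)$ by the fixed interval $(Q_N,R_{\sqrt N})$ via monotonicity (absorbing $n\le\sqrt N$ into a negligible error), and then apply Shiu's bound together with Mertens. The only cosmetic difference is that the paper writes the Shiu estimate in Euler-product form rather than the $\exp\bigl(\sum_p f(p)/p\bigr)$ form you use, and does not mention your alternative smooth/rough factorisation argument.
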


\begin{proof} The left-hand side of~\eqref{eq:f-f} is by the triangle inequality and the upper bound on $g$ bounded by
\begin{align*}
\ll \mathbb{E}_{n\leq N}d(n)^{Cr}1_{p\mid n\implies p\not \in (Q_N,R_{\sqrt{N}})}+O(N^{-1/2+o(1)})\coloneqq S+O(N^{-1/2+o(1)}).    
\end{align*}
Applying Shiu's bound~\cite[Theorem 1]{shiu} followed by Mertens's theorem, we see that 
\begin{align*}
S&\ll \prod_{p\in [1,N]\setminus(Q_N, R_{\sqrt{N}})}\left(1+\frac{d(p)^{Cr}-1}{p}\right)\prod_{p\in (Q_N, R_{\sqrt{N}})}\left(1-\frac{1}{p}\right)\\
&= \prod_{p\leq N}\left(1+\frac{2^{Cr}-1}{p}\right) \prod_{p\in (Q_N, R_{\sqrt{N}})}\left(1+\frac{2^{Cr}-1}{p}\right)^{-1}\left(1-\frac{1}{p}\right)\\
&\ll_{C,r} (\log N)^{2^{Cr}-1}\left(\frac{\log Q_N}{\log R_{\sqrt{N}}}\right)^{2^{Cr}}\\
&\ll_{C,r} (\log N)^{-1+o(1)},   
\end{align*}
as desired.
\end{proof}

The next lemma shows that the condition on the prime factors in the definition of $\widetilde{g}$ can be replaced with a sieve weight up to a small error. 

\begin{lemma}\label{le:fund} Let $N\geq 3$ and $C>0$. There exist real numbers $\lambda_r\in [-1,1]$ such that for any $A>0$ we have 
\begin{align}\label{eq:lambdad}
\mathbb{E}_{N-N(\log N)^{-A}<n\leq N}\left|1_{p\mid n\implies p\not \in (Q_n,R_n)}-\sum_{\substack{r\mid n\\r\leq N^{1/10}}}\lambda_r\right|\cdot d(n)^{C}\ll_{A,C} (\log N)^{-A/2}.    
\end{align}
\end{lemma}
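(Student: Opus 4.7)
The plan is to apply the fundamental lemma of combinatorial sieve theory to approximate the sieving indicator, after first reducing from the $n$-dependent interval $(Q_n, R_n)$ to the $n$-independent interval $(Q_N, R_N)$. First I would observe that on the short interval $I := (N - N(\log N)^{-A}, N]$, a direct Taylor computation using $\log n = \log N + O((\log N)^{-A})$ yields $Q_n/Q_N = 1 + O((\log \log N)(\log N)^{-A-1})$ and $R_n/R_N = 1 + O((\log N)^{-A}(\log \log N)^{-2})$. Consequently, the symmetric difference $\mathcal{S}$ of the intervals $(Q_n, R_n)$ and $(Q_N, R_N)$ (uniformly over $n \in I$) lies in a union of two thin annuli around $Q_N$ and $R_N$, and by the prime number theorem satisfies $\sum_{p \in \mathcal{S}} 1/p \ll (\log N)^{-A-1}$.

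Next I would set $P := \prod_{Q_N < p \leq R_N} p$ and take the $\lambda_r$ to be the weights of a combinatorial sieve (for instance Brun's pure sieve or the Iwaniec $\beta$-sieve) of dimension $\kappa \geq 2^C$, sieving level $D = N^{1/10}$, supported on divisors of $P$ that are at most $D$; these weights lie in $[-1, 1]$. Since $\log R_N = (\log N)/(\log \log N)^2$ is much smaller than $\log D = (\log N)/10$, the sieve parameter $u := \log D/\log R_N$ is of order $(\log \log N)^2$, and the fundamental lemma gives
\begin{align*}
\mathbb{E}_{n \in I}\Bigl|1_{(n, P) = 1} - \sum_{\substack{r \mid n\\ r \leq D}} \lambda_r\Bigr| d(n)^C \ll e^{-u}(\log N)^{O_C(1)} \ll \exp(-\Omega((\log \log N)^2)),
\end{align*}
which decays faster than any power of $\log N$ and is thus comfortably $\ll_{A,C} (\log N)^{-A/2}$.

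It remains to control the discrepancy between $1_{p \mid n \implies p \notin (Q_n, R_n)}$ and $1_{(n, P) = 1}$: these indicators agree unless $n$ has a prime factor in $\mathcal{S}$. Applying Shiu's bound $\sum_{n \in I, p \mid n} d(n)^C \ll (|I|/p)(\log N)^{2^C - 1}$ on the short interval $I$ and summing over $p \in \mathcal{S}$ yields a contribution of $\ll (\log N)^{2^C - 2 - A}$, which is $\ll_{A,C} (\log N)^{-A/2}$ once $A \geq 2^{C+1}$. For smaller $A$ one refines the construction by taking the $\lambda_r$ to be the sieve weights for a slightly enlarged prime interval, so that $(Q_n, R_n)$ is sandwiched between the enlarged and a correspondingly shrunk interval for all $n \in I$, thereby absorbing the boundary-prime contribution into the (super-polylogarithmic) sieve error. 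The main obstacle is precisely this boundary-prime issue, but the generous gap between $\log D$ and $\log R_N$ (a factor of $(\log \log N)^2/10$) provides more than polylogarithmic saving from the fundamental lemma, leaving plenty of room to accommodate the necessary slack.
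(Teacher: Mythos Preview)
Your approach is essentially the same as the paper's: apply the fundamental lemma of sieve theory, exploiting that $\log D / \log R_N \gg (\log\log N)^2$ gives savings faster than any power of $\log N$, and separately handle the discrepancy between $(Q_n, R_n)$ and a fixed sifting interval. The one substantive difference is how the weight $d(n)^C$ is absorbed. You invoke the fundamental lemma directly on the weighted sequence $a_n = d(n)^C 1_{n \in I}$ with a sieve of dimension $\kappa \geq 2^C$; this is valid in principle but requires verifying the axiom $\sum_{n \in I,\, d\mid n} d(n)^C = g(d)X + r_d$ with multiplicative $g$ and controlled remainder, which is not entirely free (it needs an asymptotic, not just Shiu's upper bound), so your displayed inequality is not quite an immediate consequence of the fundamental lemma as usually stated. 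The paper sidesteps this by taking the \emph{linear} sieve and splitting according to whether $d(n)$ exceeds a threshold: for small $d(n)$ the weight is bounded by a power of $\log N$ and the unweighted fundamental lemma applies; for large $d(n)$ one uses $|\nu(n)|\leq d(n)$ and Shiu's bound on a higher divisor moment to show this set contributes negligibly. Both routes succeed; the paper's is slightly more elementary, yours more direct once the axiom is checked. For the boundary-prime issue, the paper first reduces (by covering $I_A$ with shorter intervals) to $A$ large in terms of $C$, then chooses the sifting range $(Q_N, R_{N-N(\log N)^{-A}})$ so as to contain every $(Q_n,R_n)$ for $n \in I$ --- precisely your ``slightly enlarged interval'' device.
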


\begin{proof}
By splitting intervals into shorter ones if necessary, we may assume that $A$ is large enough in terms of $C$. Let $\lambda_r$ be the upper bound linear sieve coefficients of level $N^{1/10}$ and sifting range $(Q_N,R_{N-N(\log N)^{-A}})$, as defined in~\cite[Section 12]{opera}. From the definition we have $\lambda_r\in \{-1,0,+1\}$. Let $\nu(n)=\sum_{r\mid n}\lambda_r$. Then the left-hand side of~\eqref{eq:lambdad} is
\begin{align*}
&\ll (\log N)^{A/2} \mathbb{E}_{N-N(\log N)^{-A}<n\leq N}|\nu(n)-1_{p\mid n\implies p\not \in (Q_n,R_n)}|\\
&\quad+\mathbb{E}_{N-N(\log N)^{-A}<n\leq N}d(n)^{C+1}1_{d(n)\geq (\log N)^{A/C}}\\
&\coloneqq S_1+S_2.
\end{align*}
Using Shiu's bound~\cite[Theorem 1]{shiu}, we see that
\begin{align*}
S_2\ll (\log N)^{-2A} \mathbb{E}_{N-N(\log N)^{-A}<n\leq N}d(n)^{2C+1}\ll (\log N)^{-2A+2^{2C+1}-1}\ll (\log N)^{-A}   
\end{align*}
by the assumption that $A$ is large in terms of $C$. 

Since $$\nu(n)\geq 1_{p\mid n\implies p\not \in (Q_N,R_{N-N(\log N)^{-A}})}\geq 1_{p\mid n\implies p\not\in (Q_n,R_n)},$$
for $n\in (N-N(\log N)^{-A},N]$, we have
\begin{align*}
&\mathbb{E}_{N-N(\log N)^{-A}<n\leq N}|\nu(n)-1_{p\mid n\implies p\not \in (Q_n,R_n)}|\\
&\quad = \mathbb{E}_{N-N(\log N)^{-A}<n\leq N}(\nu(n)-1_{p\mid n\implies p\not \in (Q_N,R_{N-N(\log N)^{-A}})})\\
&\quad \quad+O\left(\sum_{p\in (Q_{N-N(\log N)^{-A}},Q_N)\cup (R_{N-N(\log N)^{-A}},R_N)}\frac{1}{p^2}\right).     
\end{align*}
By the fundamental lemma of sieve theory (\cite[Lemma 6.8]{iw-kow}) and Mertens's theorem, this is
\begin{align*}
\ll \exp(-(\log \log N)^2/20)+(\log N)^{-A/2},     
\end{align*}
which suffices.
\end{proof}

We are now ready to show that the Siegel--Walfisz property for $g$ implies the same property for $\widetilde{g}$. 

\begin{lemma}\label{le:SW}
 Let $g\colon \mathbb{N}\to \mathbb{C}$ be a multiplicative function satisfying the Siegel--Walfisz property. Then $\widetilde{g}$ satisfies the Siegel--Walfisz property.  
\end{lemma}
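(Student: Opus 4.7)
The divisor-bounded property is immediate, since $|\widetilde g(n)| \leq |g(n)| \leq d(n)^C$. For the second property, I write $\widetilde g(n) = g(n) - g(n)\,\mathbf{1}(n)$ where $\mathbf{1}(n) := 1_{p \mid n \implies p \notin (Q_n, R_n)}$. The Siegel--Walfisz hypothesis for $g$ handles the contribution of $g$ itself, so it suffices to show that
\begin{align*}
\sum_{\substack{n \leq N \\ n \equiv a\pmod q}} g(n)\, \mathbf{1}(n) \ll_A \frac{N}{(\log N)^A}
\end{align*}
uniformly for $q \leq (\log N)^A$.

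I would partition $[1,N]$ into short intervals $I = (M - M(\log M)^{-A_1}, M]$ with $A_1 = 4A$, and apply Lemma~\ref{le:fund} on each, replacing $\mathbf{1}(n)$ by the linear sieve weight $\nu_M(n) = \sum_{r \mid n,\, r \leq M^{1/10}} \lambda_r$. Dropping the residue condition trivially and summing the divisor-weighted $L^1$ error from Lemma~\ref{le:fund} over all $O((\log N)^{A_1+1})$ intervals covering $[1,N]$ gives a total error $\ll N(\log N)^{-A_1/2}$, which is acceptable since $A_1 \geq 2A$.

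The main term on $I$ becomes $\sum_r \lambda_r \sum_{n \in I,\, n \equiv a\pmod q,\, r \mid n} g(n)$. Since $q \leq (\log N)^A \ll Q_M = \exp((\log\log M)^2)$, any sifted $r > 1$ (all of whose prime factors exceed $Q_M$) is coprime to $q$, so the inner sum is over an arithmetic progression of modulus $qr$. Crucially, the same observation shows that the only sifted $r \leq (\log M)^{A_2}$ is $r = 1$, and its contribution $\sum_{n \in I,\, n \equiv a\pmod q} g(n)$ is $\ll_{A'} |I|/(\log M)^{A'}$ by applying the Siegel--Walfisz property of $g$ to the two endpoints of $I$.

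The main difficulty is the contribution of sifted $r > 1$: these are all at least $Q_M$, which is super-polylogarithmic in $N$, so Siegel--Walfisz on the single modulus $qr$ cannot be applied directly. The plan is to invoke a Bombieri--Vinogradov-type estimate for multiplicative functions satisfying the Siegel--Walfisz hypothesis: since $qr \leq q M^{1/10} \leq N^{1/2-\eps}$ lies in the Bombieri--Vinogradov range, the large sieve combined with the Siegel--Walfisz estimate and the divisor bound on $g$ yields
\begin{align*}
\sum_{q_0 \leq q M^{1/10}} \max_a \Bigl|\sum_{\substack{n \in I \\ n \equiv a\pmod{q_0}}} g(n)\Bigr| \ll_{A''} \frac{|I|}{(\log N)^{A''}}.
\end{align*}
Since $|\lambda_r| \leq 1$ and each nontrivial term corresponds to $q_0 = qr \leq qM^{1/10}$, this bounds the sifted-$r > 1$ contribution on $I$ by the same quantity. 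Summing over intervals and choosing $A_1, A', A''$ large enough in terms of $A$ completes the proof. The principal obstacle is establishing this Bombieri--Vinogradov bound in the required generality; while it is a standard consequence of the Siegel--Walfisz hypothesis and the divisor bound, it is the genuinely analytic input needed beyond the material developed in Sections~\ref{sec:notation}--\ref{sec:lemmas}.
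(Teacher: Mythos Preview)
Your overall strategy is sound and would succeed, but it takes a different and heavier route than the paper. Both arguments begin identically: reduce to bounding $\sum_{n\le N,\,n\equiv a\,(q)} g(n)\mathbf{1}(n)$, pass to short intervals, and invoke Lemma~\ref{le:fund} to replace the local indicator by a linear sieve weight $\sum_{r\mid n,\,r\le M^{1/10}}\lambda_r$. The divergence is in how to treat the resulting sum over $r$.

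You observe (correctly) that every nontrivial sifted $r$ exceeds $Q_M$, so the modulus $qr$ is far outside the Siegel--Walfisz range, and you appeal to a Bombieri--Vinogradov theorem for multiplicative $g$ satisfying the Siegel--Walfisz hypothesis to handle the average over $r$. This works, but as you yourself flag, it imports a substantial external result not developed in the paper. The paper instead exploits multiplicativity directly: writing $n=rm$ and then $m=\ell m'$ with $\ell\mid r^\infty$, $(m',r)=1$, one obtains $g(rm)=g(r\ell)g(m')$, and the inner sum becomes a sum of $g(m')$ over a progression to the \emph{original} small modulus $q$ (intersected with the condition $(m',r)=1$, removed by M\"obius inversion). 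One can then apply the Siegel--Walfisz assumption for $g$ itself, for each fixed $r$ and small $\ell$; the contributions of large $\ell$ and of $r$ with many prime factors are disposed of by elementary divisor-sum estimates. This keeps the argument entirely self-contained, using only the Siegel--Walfisz hypothesis, multiplicativity, and Shiu's bound.

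In short: your proof is valid modulo the Bombieri--Vinogradov input, whereas the paper avoids that input altogether by using multiplicativity to pull the large factor $r$ outside the $g$-sum and thereby never leave the small-modulus regime.
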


\begin{proof}
Since $\widetilde{g}(n)=g(n)-g(n)1_{p\mid n\implies p\not \in (Q_n,R_n)}$, it suffices to show that the function
\begin{align*}
n\mapsto g(n)1_{p\mid n\implies p\not \in (Q_n,R_n)}    
\end{align*}
satisfies the Siegel--Walfisz property. By splitting a long interval into shorter ones, it suffices to show that for any large $A>0$ and any $1\leq a\leq q\leq (\log N)^{A}$ we have 
\begin{align*}
\left|\mathbb{E}_{N-N(\log N)^{-A}<n\leq N}g(n)1_{p\mid n\implies p\not \in (Q_n,R_n)} 1_{n\equiv a\pmod q}\right|\ll_A (\log N)^{-A/2}.     
\end{align*}
By Lemma~\ref{le:fund}, it suffices to show that for any $|\lambda_r|\leq 1$ we have
\begin{align*}
\left|\mathbb{E}_{N-N(\log N)^{-A}<n\leq N}g(n)\sum_{\substack{r\mid n\\r\leq N^{1/10}}}\lambda_r 1_{n\equiv a\pmod q}\right|\ll_A (\log N)^{-A/2}.     
\end{align*}
Exchanging the order of summation and applying the triangle inequality, it suffices to show that
\begin{align*}
\sum_{r\leq N^{1/10}}\frac{1}{r}\left|\mathbb{E}_{(N-N(\log N)^{-A})/r<m\leq N/r}g(rm)1_{rm\equiv a\pmod q}\right|\ll_A (\log N)^{-A/2}.  
\end{align*}

Let $\mathcal{A}_N$ be the set of $r\in \mathbb{N}$ with $\omega(r)\leq (\log \log N)^{3/2}$. Then by Shiu's bound we have
\begin{align*}
&\sum_{\substack{r\leq N^{1/10}\\r\not \in \mathcal{A}_N}}\frac{1}{r}\left|\mathbb{E}_{(N-N(\log N)^{-A})/r<m\leq N/r}g(rm)1_{rm\equiv a\pmod q}\right|\\
&\ll (\log N)^{2^{C}-1} \sum_{\substack{r\leq N^{1/10}\\r\not \in \mathcal{A}_N}}\frac{d(r)^{C}}{r}\\
&\leq (\log N)^{2^{C}-1} \sum_{r\leq N^{1/10}}\frac{d(r)^{C+1}}{r\cdot 2^{(\log \log N)^{3/2}}}\\
&\ll 2^{-(\log \log N)^{3/2}/2},
\end{align*}
say. In view of this, it suffices to show that for all $r\in \mathcal{A}_N\cap [1,N^{1/10}]$ we have
\begin{align*}
\left|\mathbb{E}_{(N-N(\log N)^{-A})/r<m\leq N/r}g(rm)1_{rm\equiv a\pmod q}\right|\ll_A (\log N)^{-3A/2-1}.    \end{align*}
By writing the sum over $((N-N(\log N)^{-A})/r, N/r]$ as a difference of two sums, it suffices to show that for $y\in [N^{9/10}/2,N]$ we have  
\begin{align}\label{eq:l}
\left|\mathbb{E}_{m\leq y}g(rm)1_{rm\equiv a\pmod q}\right|\ll_A (\log N)^{-A-1}.  
\end{align}
We can uniquely factorise $m=\ell m'$, where $\ell\mid r^{\infty}$ and $(m',r)=1$. By multiplicativity of $g$, we then reduce to showing that
\begin{align}\label{eq:lsum}
\sum_{\ell\mid r^{\infty}}\frac{1}{\ell}\left|\mathbb{E}_{m'\leq y/\ell}g(m')1_{r\ell m'\equiv a\pmod q}1_{(m',r)=1}\right|\ll_A (\log N)^{-3A/2-1}.  
\end{align}

Let $S_1$ denote the part of the left-hand side of~\eqref{eq:lsum} with $\ell<(\log N)^{4A}$, and let $S_2$ denote the part with $\ell\geq (\log N)^{4A}$. By Shiu's bound, we can crudely estimate
\begin{align}\label{eq:linfty}\begin{aligned}
S_2&\ll \sum_{\substack{\ell\mid r^{\infty}\\\ell\geq (\log N)^{3A}}}\frac{1}{\ell}\cdot (\log N)^{2^C-1}\\
&\ll (\log N)^{-4A/2+2^{C}-1}\sum_{\ell\mid r^{\infty}}\frac{1}{\ell^{1/2}}\\
&= (\log N)^{-2A+2^{C}-1}\prod_{p\mid r}\left(1+\frac{1}{p^{1/2}}+\frac{1}{p^{2\cdot 1/2}}+\cdots\right)\\
&\ll (\log N)^{-2A+2^{C}-1}\exp(O(\omega(r)^{1/2})),
\end{aligned}
\end{align}
where for the last line we used the simple inequality 
$$\sum_{p\mid r}\frac{1}{p^{1/2}}\leq \sum_{p\leq \omega(r)}\frac{1}{p^{1/2}}\ll \omega(r)^{1/2}.$$
Since $\omega(r)\leq (\log \log N)^{3/2}$ for $r\in \mathcal{A}_N$, we see that $S_2\ll_A (\log N)^{-3A/2-1}$. 

The remaining task is to show that $S_1\ll_A (\log N)^{-3A/2-1}$. For this, it suffices to show that for any integer $1\leq \ell\leq (\log N)^{4A}$ we have
\begin{align}\label{eq:gl}
\left|\mathbb{E}_{m'\leq y/\ell}g(m')1_{r\ell m'\equiv a\pmod q}1_{(m',r)=1}\right|\ll_A (\log N)^{-3A/2-2}.  
\end{align}
From Shiu's bound and the Siegel--Walfisz assumption on $g$, for any integer $1\leq u\leq N^{1/10}$ and any $r\in \mathcal{A}_N\cap [1,N^{1/10}]$ we have
\begin{align}\label{eq:rn}
 \left|\mathbb{E}_{m'\leq y/\ell}g(m')1_{r\ell m'\equiv a\pmod q}1_{u\mid m'}\right|\ll_{A,C} \min\left\{\frac{(d(u)d(q))^{C}}{u},(\log N)^{-10A}\right\}. 
\end{align}
Substituting the M\"obius inversion formula
\begin{align*}
1_{(m',r)=1}=\sum_{\substack{u\mid r\\u\leq (\log N)^{6A}}}\mu(u)1_{u\mid m'}+\sum_{\substack{u\mid r\\u> (\log N)^{6A}}}\mu(u)1_{u\mid m'}   
\end{align*}
and~\eqref{eq:rn} into~\eqref{eq:gl}, and estimating $d(q)\ll q^{o(1)}=(\log N)^{o(1)}$, we reduce to showing that 
\begin{align*}
 \sum_{\substack{u\mid r\\u>(\log N)^{6A}}}\frac{d(u)^{C}}{u}(\log N)^{2^C-1}\ll_{A,C} (\log N)^{-3A/2-3}.    
\end{align*}
Estimating crudely using $d(u)^{C}/u\ll (\log N)^{-5A/2}/u^{1/2}$ for $u\geq (\log N)^{6A}$, and recalling that $A$ is large, it suffices to show that
\begin{align}\label{eq:2A+1}
 \sum_{u\mid r}\frac{1}{u^{1/2}}\ll \log N, 
 \end{align}
 say.  Since $r\in \mathcal{A}_N$, the left-hand side is
\begin{align*}
\prod_{p\mid r}\left(1+\frac{1}{p^{1/2}}+\frac{1}{p^{2\cdot 1/2}}+\cdots\right)\ll \exp(O(\omega(r^{1/2}))\ll \exp(O((\log \log N)^{3/4})),    
\end{align*}
which suffices. 
\end{proof}

For the proof of Proposition~\ref{prop:SW} we also need a bilinear estimate for polynomial phases.

\begin{lemma}\label{le:bilinear}
Let $A,C>0$, $s\in \mathbb{N}$, and let $\alpha_a$, $\beta_a$ be complex sequences with $|\alpha_a|, |\beta_a|\leq d(a)^{C}$ for $a\in \mathbb{N}$, and with $\alpha_a,\beta_a$ supported on $a\geq \exp((\log \log N)^2)$. Let $P(y)=\sum_{0\leq j\leq s}c_jy^s$ be a polynomial with real coefficients, and suppose that 
\begin{align}\label{eq:alphabeta}
\left|\sum_{ab\leq N}\alpha_a\beta_be(P(ab))\right|\geq N(\log N)^{-A}.    
\end{align}
Then there exists an integer $1\leq \ell\ll_{A,C,s}(\log N)^{O_{A,C,s}(1)}$ such that
\begin{align*}
\|\ell c_j\|\ll_{A,C,s}\frac{(\log N)^{O_{A,C,s}(1)}}{N^j}    
\end{align*}
for all integers $1\leq j\leq s$. 
\end{lemma}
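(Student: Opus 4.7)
The plan is to prove this bilinear Weyl-type estimate by combining dyadic decomposition, Cauchy--Schwarz, and Weyl's inequality for polynomial exponential sums, extracting Diophantine information on $c_s,\ldots,c_1$ successively. Set $\delta\coloneqq (\log N)^{-A}$ throughout.

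First I would dyadically decompose the summation into boxes $a\in[M,2M]$, $b\in[K,2K]$ with $MK\asymp N$, and pigeonhole to a single box on which the subsum is still $\gg N\delta/(\log N)^{O(1)}$. The support hypothesis on $\alpha,\beta$ forces $M,K\geq \exp((\log\log N)^2)$, in particular both larger than any polylog of $N$. After Cauchy--Schwarz in the variable $b$ and expansion of the square, the diagonal $a=a'$ contributes only $\ll MK(\log N)^{O(C)}$ and is negligible since $M$ is super-polylog, so for $\gg \delta^{O(1)}M^2$ off-diagonal pairs $(a,a')$ one has $|\sum_b e(P(ab)-P(a'b))|\gg \delta^{O(1)}K$. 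The inner sum has polynomial phase in $b$ of degree $s$ whose $j$-th coefficient equals $c_j(a^j-a'^j)$, so Weyl's inequality for polynomial exponential sums combined with pigeonhole on the resulting denominator yields a single integer $q\ll \delta^{-O(1)}$ satisfying
\begin{align*}
\|q c_j(a^j-a'^j)\|\ll \delta^{-O(1)}/K^j\quad\text{for all }1\leq j\leq s
\end{align*}
for many pairs $(a,a')$; symmetrically, Cauchy--Schwarz in $a$ gives $\|q' c_j(b^j-b'^j)\|\ll \delta^{-O(1)}/M^j$ for many pairs $(b,b')$ and some single $q'\ll \delta^{-O(1)}$.

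To upgrade these to the sharp bound $\|\ell c_j\|\ll \delta^{-O(1)}/N^j$ I would exploit the bilinear structure in both variables simultaneously. The abundance of pairs $(a,a')$ with $\|q c_j(a^j-a'^j)\|$ small concentrates the sequence $\{q c_j a^j\}_a$ modulo $1$ into an interval of length $\delta^{-O(1)}/K^j$, producing a subset $\mathcal{A}\subset[M,2M]$ of density $\gg \delta^{O(1)}$ on which $|\sum_{a\in\mathcal{A}}e(q c_j a^s)|\gg \delta^{O(1)}M$. A Fourier expansion of $1_{\mathcal{A}}$ followed by a second application of Weyl's inequality then yields $\|\ell' q c_j\|\ll \delta^{-O(1)}/M^j$ for some $\ell'\ll \delta^{-O(1)}$, and combining this with the complementary bound $\delta^{-O(1)}/K^j$ via B\'ezout (choosing aligned pairs $(a,a')$, $(b,b')$ whose values $a^j-a'^j$ and $b^j-b'^j$ together span all sufficiently small integers) transfers the factor $1/(M^j K^j)$ into the desired $1/N^j$. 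The argument then iterates on $j$: once $c_s$ is pinned near a rational of small denominator $\ell$, its contribution $c_s y^s$ becomes a major-arc phase that can be absorbed into the weights $\alpha_a,\beta_b$, reducing to the same problem for the polynomial $P-c_s y^s$ of degree $s-1$, and so on down to $c_1$. The main obstacle is precisely the ``bilinear lifting'' step just described: a one-sided Cauchy--Schwarz + Weyl only produces $1/\max(M,K)^j$, and obtaining the full $1/N^j$ denominator requires exploiting the two Cauchy--Schwarz directions simultaneously and carefully tracking polylog losses.
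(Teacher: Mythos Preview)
The paper's proof is much shorter than what you attempt: it truncates $\alpha,\beta$ to their $(\log N)^{A/10}$-bounded parts (the large values contribute negligibly by a second-moment bound exploiting the divisor-bound hypothesis), normalises to $1$-bounded sequences, and then simply cites the off-the-shelf Type~II estimate~\cite[Proposition~2.2]{matomaki-shao} for bilinear sums $\sum \alpha_a\beta_b e(P(ab))$ with $1$-bounded coefficients. So the paper does not reprove the bilinear Weyl estimate at all.

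Your direct approach has the right skeleton (dyadic localisation, Cauchy--Schwarz, Weyl), but there is a genuine gap in the ``bilinear lifting'' step, and your claimed bound $\ell'\ll\delta^{-O(1)}$ is not justified. After Cauchy--Schwarz in $b$ and Weyl, you obtain $\|q c_j(a^j-a'^j)\|\ll\delta^{-O(1)}/K^j$ for many pairs, which concentrates $\{q c_j a^j\}\pmod 1$ into an arc of width $\eta\asymp\delta^{-O(1)}/K^j$ for $a$ in some dense set $\mathcal{A}$. But a Fourier expansion of $1_{\mathcal{A}}$ is useless here (its $\ell^1$ Fourier mass can be as large as $M$), while Fourier-expanding the \emph{concentration condition} instead produces a harmonic $r\ll 1/\eta\asymp K^j$ for which $|\sum_{a\sim M}e(rqc_j a^j)|$ is large. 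Weyl then gives $\|\ell' r q c_j\|\ll\delta^{-O(1)}/M^j$ with $\ell'$ polylog, but the effective denominator $\ell' r q$ can be as large as $K^j$, not $\delta^{-O(1)}$. Thus one direction yields a rational approximation with denominator up to $K^j$ and error $\asymp 1/M^j$; the symmetric direction gives denominator up to $M^j$ and error $\asymp 1/K^j$. Your proposed ``B\'ezout'' combination does not force these to agree: one only gets $|p'/L'-p''/L''|\ll 1/(L'M^j)+1/(L''K^j)$, which need not be smaller than $1/(L'L'')$ when $L'\asymp K^j$, $L''\asymp M^j$. Extracting a \emph{polylog} denominator with error $\ll 1/N^j$ from the one-sided Weyl information is exactly the nontrivial content of the Type~II estimates you are trying to prove; it requires a more careful iterative exploitation of the many good pairs $(a,a')$ after a \emph{single} Cauchy--Schwarz, not two independent applications of Cauchy--Schwarz followed by a gcd argument.
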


\begin{proof} We may assume that $C>0$ is large and that $A>0$ is large in terms of $C$. Write $\alpha_a=\alpha_a^{(1)}+\alpha_a^{(2)}$ and $\beta_a=\beta_a^{(1)}+\beta_a^{(2)}$, where
\begin{align*}
\alpha_a^{(1)}=\alpha_a1_{|\alpha_a|\leq (\log N)^{A/10}},\quad   \beta_a^{(1)}=\beta_a1_{|\beta_a|\leq (\log N)^{A/10}}.  
\end{align*}
Then, since $A$ is large in terms of $C$, we have 
\begin{align}\label{eq:l2}\begin{aligned}
\sum_{n\leq N}|\alpha_n^{(2)}|^2&\leq \sum_{n\leq N}d(n)^{2C}1_{d(n)^{C}>(\log N)^{A/10}}\\
&\leq (\log N)^{-3A}\sum_{n\leq N}d(n)^{32C}\\
&\ll (\log N)^{-5A/2},
\end{aligned}
\end{align}
and similarly with $\beta_n^{(2)}$ in place of $\alpha_n^{(2)}$. Now, applying the decompositions $\alpha_a=\alpha_a^{(1)}+\alpha_{a}^{(2)}$, $\beta_b=\beta_b^{(1)}+\beta_b^{(2)}$,~\eqref{eq:l2} and Cauchy--Schwarz, the assumption~\eqref{eq:alphabeta} yields
\begin{align*}
\left|\sum_{ab\leq N}\alpha_a'\beta_b'e(P(ab))\right|\gg N(\log N)^{-A/2},      
\end{align*}
where $\alpha_a'=\alpha_a^{(1)}/(\log N)^{A/10}$, $\alpha_b'=\alpha_b^{(1)}/(\log N)^{A/10}$. Since $\alpha_a', \beta_b'$ are $1$-bounded, the result follows e.g. from~\cite[Proposition 2.2]{matomaki-shao} (which is an exponential sum estimate over short intervals; weaker results would also suffice). \end{proof}

\begin{proof}[Proof of Proposition~\ref{prop:SW}] Recall the definition of $\widetilde{g}$ from~\eqref{eq:gtilde}. We take $g_1=\widetilde{g}$, $g_2=g-\widetilde{g}$. Then we immediately have $|g_1|, |g_2|\leq |g|$. In view of Lemma~\ref{le:Lr}, it suffices to show that $\|g_1\|_{u^k[N]}\ll_{A,k} (\log N)^{-A}$.

Let $C>0$ be such that $|g(n)|\leq d(n)^{C}$ for all $n$.  By splitting into short intervals, it suffices to show that for any $A>0$ and any polynomial $P(y)=\sum_{1\leq j\leq k-1}c_jy^j\in \mathbb{R}[y]$ we have
\begin{align}\label{eq:gp}
\left|\mathbb{E}_{N-N(\log N)^{-A}<n\leq N}\widetilde{g}(n)e(P(n))\right|\ll_A (\log N)^{-A/2}.    
\end{align}
Let 
$$\widetilde{g}_N(n)=g(n)(1-1_{p\mid n\implies p\in (Q_N,R_N)}).$$
Write $\mathcal{J}_N=(Q_{N-N(\log N)^{-A}},Q_N]\cup (R_{N-N(\log N)^{-A}},R_N]$ for brevity. Then for $n\in (N-N(\log N)^{-A},N]$ we have 
$$\widetilde{g}(n)=\widetilde{g}_N(n)+O(d(n)^{C}1_{\exists\, p\in \mathcal{J}_N\colon\,\, p\mid n}).$$
Hence, by Shiu's bound, for any $1\leq Y_1<Y_2\leq N$ with $Y_2\geq Y_1+N^{1/2}$ we can estimate
\begin{align}\label{eq:gtildediff}
\mathbb{E}_{Y_1<n\leq Y_2}|\widetilde{g}(n)-\widetilde{g}_N(n)|\ll \sum_{p\in \mathcal{J}_N}\frac{1}{p}\mathbb{E}_{Y_1/p<m\leq Y_2/p}d(pm)^{C}\ll_A (\log N)^{2^C-1-A}.     
\end{align}
Hence, it suffices to prove~\eqref{eq:gp} with $\widetilde{g}_N$ in place of $\widetilde{g}$.  

For $I$ an interval, let $\omega_I(n)$ denote the number of prime factors from $I$ without multiplicities. Then we immediately have the  Ramar\'e identity
\begin{align*}
\widetilde{g}_N(n)=\sum_{p\in (Q_N,R_N)}\,\,\sum_{n=pm}\frac{g(pm)}{\omega_{(Q_N,R_N)}(pm)}.
\end{align*}
By multiplicativity, $g(pm)=g(p)g(m)$ and $\omega_{(Q_N,R_N)}(pm)=1+\omega_{(Q_N,R_N)}(m)$ unless $p\mid m$, so 
\begin{align*}
\widetilde{g}_N(n)&=\sum_{p\in (Q_N,R_N)}\,\,\sum_{n=pm}\frac{g(p)g(m)}{\omega_{(Q_N,R_N)}(m)+1}+O(1_{\exists\, p\in (Q_N,R_N)\colon\,\, p^2\mid n})\\
&\coloneqq g'_N(n)+O(1_{\exists\, p\in (Q_N,R_N)\colon\,\, p^2\mid n}).
\end{align*}
We trivially have
\begin{align}\label{eq:p2}
 \mathbb{E}_{N-N(\log N)^{-A}<n\leq N}1_{\exists\, p\in (Q_N,R_N)\colon\,\, p^2\mid n}\ll \sum_{p\in (Q_N,R_N)}\frac{1}{p^2}\ll \frac{1}{Q_N}.    
\end{align}

Since $g'_N(n)$ is by definition of the form $\sum_{n=ab}\alpha_a\beta_b$ with $|\alpha_a|\leq 1$, $|\beta_b|\ll d(b)^{C}$, and since $\alpha_a,\beta_a$ are supported on $(Q_N,N/Q_N)$ by Lemma~\ref{le:bilinear} and~\eqref{eq:p2} we conclude that
\begin{align*}
\left|\mathbb{E}_{N-N(\log N)^{-A}<n\leq N}\widetilde{g}_N(n)e(P(n))\right|\leq (\log N)^{-A}     
\end{align*}
unless for some integer $1\leq \ell\ll_A (\log N)^{O_A(1)}$ the coefficients of $P$ satisfy
\begin{align}\label{eq:lcj}
\|\ell c_j\|\ll_{A,C,k}\frac{(\log N)^{O_{A,C,k}(1)}}{N^j}    
\end{align}
for all integers $1\leq j\leq s-1$. 

Suppose that~\eqref{eq:lcj} holds. Let $B$ be large in terms of $A, C, k$. Then $e(P(n))=e(P(N))(1+O((\log N)^{-A}))$ for $n$ belonging to any subinterval of $[1,N]$ of length $N(\log N)^{-A}$. Now, splitting into progressions modulo $\ell$ and into short intervals, we see that
\begin{align*}
&\left|\mathbb{E}_{N-N(\log N)^{-A}<n\leq N}\widetilde{g}_N(n)e(P(n))\right|\\
\ll& \ell\max_{b\pmod{\ell}}\,\,\max_{z\in (N-N(\log N)^{-A},N)}\left|\mathbb{E}_{z-N(\log N)^{-B}<n\leq z}\widetilde{g}_N(n)e(P(n))1_{n\equiv b\pmod{\ell}}\right|\\
\ll&  \ell\max_{b\pmod \ell}\,\,\max_{z\in (N-N(\log N)^{-A},N)}\left|\mathbb{E}_{z-N(\log N)^{-B}<n\leq z}\widetilde{g}_N(n)1_{n\equiv b\pmod{\ell}}\right|+O_A((\log N)^{-A}).    
\end{align*}
Now the claim follows from~\eqref{eq:gtildediff} and the Siegel--Walfisz assumption on $\widetilde{g}$ (which we have thanks to Lemma~\ref{le:SW}). 
\end{proof}

\section{Lemmas for the main proofs}

\subsection{A lacunary subsequence trick}

We use a lacunary subsequence trick in the proofs of our pointwise convergence results. Such a trick roughly states that if $A_{N}\colon X\to \mathbb{C}$ are some measurable functions and we have strong quantitative decay for $\|A_{N_j}\|_{L^1(X)}$ for some lacunary sequence $(N_j)_{j\in \mathbb{N}}$, then provided that $A_N$ does not vary too much on intervals of the form $[N_j,N_{j+1}]$ in $L^1(X)$ norm, the sequence $A_N$ must converge to $0$ in $L^{\infty}(X)$ norm.  Variants of this idea are frequently used to establish convergence of ergodic averages; see for example~\cite[Section 5]{fra-les-wierdl}.

\begin{lemma}\label{le:borel-cantelli}
Let $B\geq 40$. Let $(X,\nu)$ be a probability space, and for $N\geq 1$ let $A_N\colon X\to \mathbb{C}$ be a measurable function. If for any $N\geq 3$ we have
    \begin{align}\label{eq1}
    \|A_N\|_{L^1(X)}\ll (\log N)^{-B}   
    \end{align} 
and for $3\leq N<M\leq N(1+(\log N)^{-(B/2-1)})$ and for $\nu$-almost all $x\in X$ we have
 \begin{align}\label{eq2}
|A_M(x)-A_N(x)|\leq (\log N)^{B/10-B/2+1}, 
\end{align}
then for $\nu$-almost all $x\in X$ we have $\lim_{N\to \infty}(\log N)^{2B/5-2}|A_N(x)|=0$. 
\end{lemma}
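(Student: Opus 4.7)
My plan is the classical lacunary subsequence trick: I will establish the conclusion along a carefully chosen sparse integer sequence $(N_j)$ via Markov's inequality and the Borel--Cantelli lemma, and then use the continuity hypothesis (2) to fill in the values of $N$ strictly between consecutive $N_j$.

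\textbf{Choice of subsequence.} I would set $N_j := \lceil \exp(j^{2/B}) \rceil$, so that $\log N_j = j^{2/B}(1+o(1))$. A short computation using the mean value theorem gives
\[
\frac{N_{j+1}}{N_j} - 1 = \left(\frac{2}{B}+o(1)\right) j^{2/B - 1} \leq j^{-1+2/B}(1+o(1)) = (\log N_j)^{-(B/2-1)}(1+o(1)),
\]
so for all $j$ sufficiently large, every integer $N \in [N_j, N_{j+1}]$ lies in the range to which (2) applies with $N_j$ playing the role of $N$.

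\textbf{Markov and Borel--Cantelli.} Combining (1) with Markov's inequality yields, for each $\epsilon > 0$,
\[
\nu\bigl(\{x : |A_{N_j}(x)| \geq \epsilon\, (\log N_j)^{-(2B/5 - 2)}\}\bigr) \ll \epsilon^{-1} (\log N_j)^{-(3B/5 + 2)} \ll_\epsilon j^{-6/5 - 4/B}.
\]
Since $B \geq 40$ the right-hand side is summable in $j$. Applying Borel--Cantelli along $\epsilon = 1/k$ and taking the union of the resulting null sets over $k$, I obtain that for $\nu$-almost every $x$,
\[
(\log N_j)^{2B/5 - 2} |A_{N_j}(x)| \longrightarrow 0 \qquad \text{as } j \to \infty.
\]

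\textbf{Interpolation.} For each $N \geq N_{j_0}$ I let $j = j(N)$ be the largest index with $N_j \leq N$. Hypothesis (2) holds for each integer pair $(N_j, N)$ with $N_j \leq N \leq N_{j+1}$ outside a null set, and a countable union of null sets is null; hence for $\nu$-a.e.\ $x$ one has, simultaneously for all such $(j,N)$,
\[
|A_N(x) - A_{N_j}(x)| \leq (\log N_j)^{-(2B/5 - 1)}.
\]
Since $\log N / \log N_j = 1 + o(1)$ on this range, I obtain
\[
(\log N)^{2B/5 - 2} |A_N(x)| \leq (1+o(1))(\log N_j)^{2B/5 - 2}|A_{N_j}(x)| + (1+o(1))(\log N_j)^{-1},
\]
and both summands tend to zero by the previous step, giving the desired conclusion.

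The main technical point will be calibrating the growth of $(N_j)$: the continuity hypothesis forces $\log N_{j+1} - \log N_j \lesssim (\log N_j)^{-(B/2-1)}$, while Borel--Cantelli demands $\sum_j (\log N_j)^{-(3B/5 + 2)} < \infty$. Choosing $\log N_j \asymp j^{2/B}$ hits the largest possible exponent allowed by the first constraint, and the hypothesis $B\geq 40$ ensures $6/5 + 4/B > 1$ so that the Borel--Cantelli sum converges.
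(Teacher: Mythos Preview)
Your proof is correct and follows essentially the same approach as the paper's own argument: choose $N_j$ with $\log N_j \asymp j^{2/B}$, apply Markov and Borel--Cantelli along this subsequence, and interpolate using hypothesis~(2). The only cosmetic differences are that the paper takes $N_j=\exp(\eta\, j^{2/B})$ with a small constant $\eta$ (rather than taking a ceiling), and that your closing remark slightly misattributes the role of $B\geq 40$---the summability $6/5+4/B>1$ holds for all $B>0$, so the large-$B$ hypothesis is really only buying room in the exponents such as $2B/5-2>0$ and $B/2-1>0$.
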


\begin{proof}
Let $\eta>0$ be a small enough constant, and set  $N_j=\exp(\eta\cdot  j^{2/B})$ for all $j\in \mathbb{N}$. Note that $N_{j+1}\leq N_j(1+(\log N_j)^{-(B/2-1)})$ for all large enough $j\in \mathbb{N}$,
so by~\eqref{eq2} we have
\begin{align}\label{eq2b}
 \sup_{M\in [N_j,N_{j+1}]}|A_{M}(x)-A_N(x)|\leq (\log N_j)^{B/10-B/2+1}   \end{align}
For $\delta\in (0,1/2)$,  denote 
\begin{align}\label{eq:xj}\begin{split}
E(\delta)&=\{x\in X\colon \limsup_{N\to \infty}\,(\log N)^{2B/5-2}|A_N(x)|\geq \delta\},\\
E_{j}(\delta)&=\{x\in X\colon |A_{N_j}(x)|\geq \frac{\delta}{2}(\log N_j)^{-2B/5+2}\}.
\end{split}
\end{align}
Then by~\eqref{eq2b} we have
\begin{align}\label{eq6}
E(\delta)\subset \bigcap_{i=3}^{\infty} \bigcup_{j\geq i} E_{j}(\delta).    
\end{align}

By Markov's inequality and~\eqref{eq1}, we have the bound
\begin{align*}
\nu(E_j(\delta))&\ll_{\delta} (\log N_j)^{2B/5-B-2}\ll j^{-1.1}.
\end{align*}
Since $\sum_{j\geq 3}j^{-1.1}<\infty$, from~\eqref{eq6} and  the Borel--Cantelli lemma we conclude that $\nu(E(\delta))=0$. By the countable additivity of $\nu$, this then implies that 
$$\nu\left(\bigcap_{\delta>0}E(\delta)\right)=\nu\left(\bigcap_{j\geq 1}E(1/j)\right)=0,$$
as desired. 
\end{proof}

\subsection{A simple \texorpdfstring{$L^1$}{L1} bound}

We also need a simple $L^1$ estimate for ergodic averages that follows from H\"older's inequality.

\begin{lemma}\label{le_L1}
Let $k\in \mathbb{N}$. Let $(X,\nu)$ be a probability space, and let $T_1,\ldots, T_k\colon X\to X$ be invertible measure-preserving maps. Let $g_1,\ldots, g_k\colon \mathbb{Z}\to \mathbb{Z}$ be functions. Also let $1<q_1,\ldots,q_k\leq \infty$ satisfy $\frac{1}{q_1}+\cdots +\frac{1}{q_k}\leq   1$. Then, for any $f_1\in L^{q_1}(X),\ldots, f_k\in L^{q_k}(X)$, $\theta\colon \mathbb{N}\to \mathbb{C}$ and $N\geq 1$, we have
\begin{align*}
 \|\mathbb{E}_{n\leq N}\,\theta(n)\prod_{j=1}^k f_j(T_j^{g_j(n)}\cdot)\|_{L^1(X)}\leq \left(\mathbb{E}_{n\leq N}|\theta(n)|^r\right)^{1/r}\|f_1\|_{L^{q_1}(X)}\cdots \|f_k\|_{L^{q_k}(X)},  
\end{align*}
where $1\leq r\leq \infty$ satisfies $1/r+1/q_1+\cdots+1/q_k=1$.
\end{lemma}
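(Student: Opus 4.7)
The plan is to combine the triangle inequality with two successive applications of H\"older's inequality, on the probability space $(X,\nu)$ and on the discrete probability space $[N]$ respectively. First I would pull the $L^1(X)$ norm inside the average over $n$ using the triangle inequality and Fubini, obtaining
\begin{align*}
\left\|\mathbb{E}_{n\leq N}\theta(n)\prod_{j=1}^k f_j(T_j^{g_j(n)}\cdot)\right\|_{L^1(X)}
\leq \mathbb{E}_{n\leq N}|\theta(n)|\int_X \prod_{j=1}^k |f_j(T_j^{g_j(n)}x)|\,d\nu(x).
\end{align*}

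Next I would handle the inner integral for fixed $n$. Since $(X,\nu)$ is a probability space and $\sum_{j}1/q_j\leq 1$, one can pick exponents $p_j\in[q_j,\infty]$ with $\sum_{j}1/p_j=1$ and apply H\"older's inequality with those exponents, followed by the embedding $L^{q_j}(X)\hookrightarrow L^{p_j}(X)$ (which on a probability space has norm one), to obtain
\begin{align*}
\int_X \prod_{j=1}^k |f_j(T_j^{g_j(n)}x)|\,d\nu(x)\leq \prod_{j=1}^k \|f_j(T_j^{g_j(n)}\cdot)\|_{L^{q_j}(X)}=\prod_{j=1}^k \|f_j\|_{L^{q_j}(X)},
\end{align*}
where the last equality uses that each $T_j$ is measure-preserving and hence acts isometrically on every $L^{q_j}(X)$. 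Crucially, this bound is independent of $n$.

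Finally I would apply H\"older's inequality on the discrete probability space $[N]$ with exponents $r$ and $r'$ satisfying $1/r+1/r'=1$, which gives $\mathbb{E}_{n\leq N}|\theta(n)|\leq (\mathbb{E}_{n\leq N}|\theta(n)|^r)^{1/r}$. Putting the three estimates together yields the claimed bound. There is no real obstacle here; the argument is entirely routine, and the only mildly delicate point is the choice of auxiliary exponents $p_j$ in the $X$-H\"older step to absorb the slack in the assumption $\sum_j 1/q_j\leq 1$, which works precisely because $\nu$ is a probability measure.
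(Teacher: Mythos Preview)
Your argument is essentially the same as the paper's: both rely on H\"older's inequality together with the measure-preserving property of the $T_j$. The paper does it in a single step, applying H\"older on the product probability space $X\times[N]$ with exponents $r,q_1,\ldots,q_k$ (so the slack in $\sum_j 1/q_j\leq 1$ is absorbed directly by the $\theta$ factor), whereas you split the argument into a H\"older step on $X$ and a separate one on $[N]$. Both are fine and yield the identical bound.

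One small slip to fix: in your $X$-H\"older step you write $p_j\in[q_j,\infty]$, but with $p_j\geq q_j$ one has $\sum_j 1/p_j\leq \sum_j 1/q_j\leq 1$, so you cannot reach $\sum_j 1/p_j=1$ unless equality already held. You want $p_j\in(1,q_j]$ instead; then $\sum_j 1/p_j\geq \sum_j 1/q_j$ and you can choose $\sum_j 1/p_j=1$, and the probability-space embedding $L^{q_j}(X)\hookrightarrow L^{p_j}(X)$ goes in the correct direction (since $p_j\leq q_j$ implies $\|\cdot\|_{L^{p_j}}\leq\|\cdot\|_{L^{q_j}}$). With that correction your proof is complete. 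An even cleaner variant is to apply H\"older on $X$ directly with exponents $r,q_1,\ldots,q_k$ and the dummy function $1$ in the $r$ slot, which avoids auxiliary exponents altogether.
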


\begin{proof}
By H\"older's inequality and the $T$-invariance of $\nu$, we have
\begin{align*}
 &\|\mathbb{E}_{n\leq N}\theta(n)\prod_{j=1}^k f_j(T_j^{g_j(n)}\cdot)\|_{L^1(X)}\\
 &\quad \leq \left(\mathbb{E}_{n\leq N}|\theta(n)|^r\right)^{1/r}\prod_{j=1}^k \left(\int_{X}\mathbb{E}_{n\in \mathcal{S}}|f_j(T_j^{g_j(n)}x)|^{q_j}\d \nu(x)\right)^{1/q_j}\\
 &\quad =\left(\mathbb{E}_{n\leq N}|\theta(n)|^r\right)^{1/r}\prod_{j=1}^k\|f_j\|_{L^{q_j}(X)},
\end{align*}
as claimed.
\end{proof}

\section{Proofs of the pointwise ergodic theorems}\label{sec:mainproof}

All of our main theorems will be proven in the more general setting of weighted polynomial ergodic averages
\begin{align}\label{eq:AN}
A_N^{P_1,\ldots, P_k}(\theta;f_1,\ldots, f_k)(x)\coloneqq \frac{1}{N}\sum_{n\leq N}\theta(n)f_1(T^{P_1(n)}x)\cdots f_k(T^{P_k(n)}x),    
\end{align}
where $\theta\colon \mathbb{N}\to \mathbb{C}$ is a function satisfying suitable uniformity norm estimates. We note the following result does not require $\theta$ to be multiplicative, and the hypotheses are satisfied also for example for $\theta$ being a suitably normalised version of the von Mangoldt function (by~\cite[Theorem 6]{lengII}).

\begin{theorem}[Pointwise convergence of polynomial ergodic averages with nice weight]\label{thm_polyergodic-general}
Let $d,k,K\in \mathbb{N}$. Let $\theta\:\mathbb{N}\to \mathbb{C}$ satisfy 
\begin{align}\label{eq:thetabound}
|\theta(n)|\leq (\log n)^{K}d(n)^{K}
\end{align}
for all $n\geq 3$. 
Let $P_1,\ldots, P_k$ be polynomials with integer coefficients satisfying $\deg P_1\leq \cdots \leq \deg P_k=d$. Suppose that one of the following holds:
\begin{enumerate}[(i)]
    \item We have $$\|\theta\|_{U^{s+1}[M]}\ll_{B,s} (\log M)^{-B}$$ for any $M\geq 3$, $s\in \mathbb{N}$ and $B>0$.
    \item We have $$\|\theta\|_{u^{d+1}[M]}\ll_B (\log M)^{-B}$$ for any $M\geq 3$ and $B>0$, and the polynomials $P_1,\ldots, P_k$ have pairwise distinct degrees. 
\end{enumerate}

Let $(X,\nu,T)$ be a measure-preserving system, and let $1<q_1,\ldots, q_k\leq \infty$ satisfy $\frac{1}{q_1}+\cdots+\frac{1}{q_k}< 1$. Then, for any $f_1\in L^{q_1}(X),\ldots, f_k\in L^{q_k}(X)$ and $A\geq 0$, we have 
\begin{align*}
\lim_{N\to \infty} \frac{(\log N)^{A}}{N}\sum_{n\leq N}\theta(n)f_1(T^{P_1(n)}x)\cdots f_k(T^{P_k(n)}x)=0
\end{align*}
for almost all $x\in X$.
\end{theorem}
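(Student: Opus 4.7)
The plan is to reduce by density to $f_i\in L^\infty(X)$, apply the lacunary subsequence trick of Lemma~\ref{le:borel-cantelli} to the operator
\[
A_N(x) \coloneqq \frac{1}{N}\sum_{n\le N}\theta(n)f_1(T^{P_1(n)}x)\cdots f_k(T^{P_k(n)}x),
\]
and obtain the required $L^1$ decay of $A_N$ by dualising against a test function $\phi\in L^\infty(X)$ and invoking the generalised von Neumann theorems of Section~\ref{sec:GVNT}.

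\textbf{Step 1 (density).} Since $\sum_i 1/q_i<1$, there exists $r<\infty$ with $1/r+\sum_i 1/q_i=1$. The growth bound on $\theta$ and Shiu's theorem give $\mathbb{E}_{n\le N}|\theta(n)|^r\ll_{r,K}(\log N)^{O_{r,K}(1)}$, so Lemma~\ref{le_L1} bounds $\|A_N\|_{L^1(X)}$ by this polylog times $\prod_i\|f_i\|_{q_i}$. Truncating $f_i=f_i^R+f_i^{>R}$ and expanding the multilinear product, the terms containing at least one factor $f_i^{>R}$ have $L^1$ norm at most $(\log N)^{O(1)}\|f_i^{>R}\|_{q_i}\prod_{j\ne i}\|f_j\|_{q_j}$; a diagonal argument choosing $R=R(j)\to\infty$ fast enough along a lacunary subsequence $N_j$ shows that if the theorem is established for $L^\infty$ data with any polylogarithmic rate $(\log N)^{-A'}$, it follows for $f_i\in L^{q_i}$ with the same rate. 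Hence assume henceforth that $f_i\in L^\infty(X)$.

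\textbf{Step 2 (lacunary trick).} Apply Lemma~\ref{le:borel-cantelli} with $B$ large (to be chosen in terms of $A$). The pointwise continuity hypothesis is elementary from Shiu's bound $\sum_{n\le X}|\theta(n)|\ll X(\log X)^{K+2^K-1}$: for $N\le M\le N(1+(\log N)^{-(B/2-1)})$ one has
\[
|A_M(x)-A_N(x)|\le\left|\tfrac{1}{M}-\tfrac{1}{N}\right|\sum_{n\le N}|\theta(n)|\prod_i\|f_i\|_\infty+\frac{1}{M}\sum_{N<n\le M}|\theta(n)|\prod_i\|f_i\|_\infty\ll(\log N)^{-B/2+O_K(1)}\prod_i\|f_i\|_\infty,
\]
which beats the required $(\log N)^{-2B/5+1}$ once $B$ is large in terms of $K$. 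It therefore suffices to prove the $L^1$ bound
\[
\|A_N\|_{L^1(X)}\ll_B\prod_i\|f_i\|_\infty\cdot(\log N)^{-B}\qquad(\star)
\]
for arbitrarily large $B$.

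\textbf{Step 3 (duality and GVN).} Write $\|A_N\|_{L^1(X)}=\sup_{\|\phi\|_\infty\le 1}\int_X\phi A_N\,d\nu$. For such $\phi$, the $T$-invariance of $\nu$ gives $\int_X\phi A_N\,d\nu=\int_X\phi(T^m\cdot)\,A_N(T^m\cdot)\,d\nu$ for every $m\in\mathbb{Z}$; averaging over $m\in[M]$ with $M\coloneqq\lceil N^d\rceil$ yields
\[
\int_X\phi A_N\,d\nu=\int_X\frac{1}{MN}\sum_{m\in[M]}\sum_{n\in[N]}\theta(n)\phi(T^mx)\prod_i f_i(T^{m+P_i(n)}x)\,d\nu(x).
\]
For each $x\in X$, set $\Phi_x(m)\coloneqq\phi(T^mx)1_{[M]}(m)$ and $F_{i,x}(m)\coloneqq f_i(T^mx)1_{[-CN^d,CN^d]}(m)$ for a sufficiently large $C=C(P_1,\ldots,P_k)$. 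The inner expression is then exactly the polynomial counting operator $\Lambda^{CN^d,N}_{P_1,\ldots,P_k}(\theta;\Phi_x,F_{1,x},\ldots,F_{k,x})$ of~\eqref{eq:Lambda}, with functions pointwise bounded (after normalising by $\prod\|f_i\|_\infty$) by $1$. Under hypothesis (i), Theorem~\ref{thm:GVNT1} applied to the collection $\mathcal{Q}=\{0,P_1,\ldots,P_k\}$ of degree $d$ produces, uniformly in $x$,
\[
|\Lambda^{CN^d,N}_{P_1,\ldots,P_k}(\theta;\Phi_x,F_{1,x},\ldots,F_{k,x})|\ll_{P_1,\ldots,P_k}\|\theta\|_{U^{s+1}[N]}\prod_i\|f_i\|_\infty
\]
for some $s\ll_{k,d}1$; under hypothesis (ii) with $P_i$ of pairwise distinct degrees, Theorem~\ref{thm:GVNT2} gives the analogous bound with $(N^{-1}+\|\theta\|_{u^{d+1}[N]})^{1/K}$ in place of $\|\theta\|_{U^{s+1}[N]}$. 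Integrating in $x$ and taking the supremum over $\phi$, the uniformity hypothesis on $\theta$ yields $(\star)$, and Lemma~\ref{le:borel-cantelli} concludes the proof upon choosing $B$ large enough that $2B/5-2>A$.

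\textbf{Main obstacle.} The bulk of the work has already been done in Section~\ref{sec:GVNT}; what remains here is the identification in Step 3 of the dualised integrand as an instance of $\Lambda$, which requires a careful choice of cut-offs and the use of $T$-invariance to convert the spatial $L^1$ norm into a fixed-$x$ counting operator. The density reduction of Step 1 is routine but one must balance the polylogarithmic growth of $\mathbb{E}|\theta|^r$ against the rate of truncation; all remaining steps are mechanical applications of the results already established in the paper.
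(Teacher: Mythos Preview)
Your overall strategy matches the paper's: reduce to an $L^1$ decay estimate via the lacunary trick (Lemma~\ref{le:borel-cantelli}), then obtain that estimate by dualising, averaging over $m\in[N^d]$ using $T$-invariance, and invoking Theorem~\ref{thm:GVNT1} or~\ref{thm:GVNT2}. However, Step~1 contains a genuine gap, and Step~3 has a smaller omission.

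\textbf{Gap in Step 1.} Your tail bound $\|A_N(\ldots,f_i^{>R},\ldots)\|_{L^1}\ll(\log N)^{O(1)}\|f_i^{>R}\|_{q_i}\prod_{j\neq i}\|f_j\|_{q_j}$ is correct, but $\|f_i^{>R}\|_{q_i}$ carries \emph{no quantitative decay} in $R$: it tends to $0$ only at an uncontrolled rate depending on $f_i$. Your diagonal choice $R=R(j)$ must then grow fast enough that $(\log N_j)^{O(1)}\|f_i^{>R_j}\|_{q_i}$ is summable, but this may force $R_j$ far beyond any power of $\log N_j$; the main-term bound from Step~3, which scales like $R_j^{\,k}\|\theta\|_{U^{s+1}[N_j]}\ll R_j^{\,k}(\log N_j)^{-B'}$, is then no longer summable for any fixed $B'$. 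The paper repairs this by exploiting the strict inequality $\sum_i 1/q_i<1$ a second time: choosing $\delta>0$ with $\tfrac{1}{r}+\tfrac{1+\delta}{q_\ell}+\sum_{j\neq \ell}\tfrac{1}{q_j}=1$, the pointwise inequality $|f_\ell^{>R}|\le R^{-\delta}|f_\ell|^{1+\delta}$ combined with Lemma~\ref{le_L1} gives $\|A_N(\ldots,f_\ell^{>R},\ldots)\|_{L^1}\ll (\log N)^{O_{K,r}(1)}R^{-\delta}\prod_j\|f_j\|_{q_j}^{O(1)}$. Taking $R=(\log N)^{C}$ with $C$ large (depending only on $A,K,q_i$, not on the $f_i$) makes both the bounded and tail contributions $\ll(\log N)^{-B}$, yielding the $L^1$ estimate uniformly in $N$ as Lemma~\ref{le:borel-cantelli} requires.

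\textbf{Omission in Step 3.} Under hypothesis~(ii) you invoke Theorem~\ref{thm:GVNT2}, but that theorem assumes $|\theta|\le 1$, which fails here since $|\theta(n)|\le(\log n)^{K}d(n)^{K}$. The paper writes $\theta=\theta_1+\theta_2$ with $\theta_1=\theta\,1_{|\theta|\le(\log N)^{A}}$; then $\theta_1/(\log N)^A$ is $1$-bounded and still has small $u^{d+1}[N]$ norm (because $\mathbb{E}_{n\le N}|\theta_2(n)|\ll(\log N)^{-A}$ by the divisor bound), so Theorem~\ref{thm:GVNT2} applies to it, while the $\theta_2$ contribution is handled trivially.
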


Let us first see how this theorem implies our main theorems.

\begin{proof}[Proof of Theorems~\ref{thm_polyergodic} and~\ref{thm_multpolyergodic} assuming Theorem~\ref{thm_polyergodic-general}] Theorem~\ref{thm_polyergodic} follows by taking $\theta=\mu$ and applying Lemma~\ref{le:leng}. 

For proving Theorem~\ref{thm_multpolyergodic}, we first use Proposition~\ref{prop:SW} to obtain a decomposition $g=g_1+g_2$ with $\|g_1\|_{u^k[N]}\ll_{B,k}(\log N)^{-B}$, $\mathbb{E}_{n\leq N}|g_2(n)|^r\ll_r (\log N)^{-1+o(1)}$ and $|g_1|, |g_2|\leq |g|$. Applying Theorem~\ref{thm_polyergodic-general}, we reduce to showing that
\begin{align*}
 \lim_{N\to \infty} \frac{1}{N}\sum_{n\leq N}g_2(n)f_1(T^{P_1(n)}x)\cdots f_k(T^{P_k(n)}x)=0  
\end{align*}
for almost all $x\in X$.

Since $1/q_1+\cdots +1/q_k<1$, we can find some $0<\varepsilon<\min_{1\leq j\leq k}(q_j-1)$ and $r\geq 1$ such that $1/r+1/(q_1-\varepsilon)+\cdots+1/(q_k-\varepsilon)=1$. Then by H\"older's inequality we have 
\begin{align*}
&\left|\mathbb{E}_{n\leq N}g_2(n)f_1(T^{P_1(n)}x)\cdots f_k(T^{P_k(n)}x)\right|\\
&\quad \leq \left(\mathbb{E}_{n\leq N}|g_2(n)|^r\right)^{1/r}\prod_{j=1}^k \left(\mathbb{E}_{n\leq N}|f_j(T^{P_j(n)}x)|^{q_j-\varepsilon}\right)^{1/(q_j-\varepsilon)}.
\end{align*}
In view of the bound $\mathbb{E}_{n\leq N}|g_2(n)|^r\ll_r (\log N)^{-1+o(1)}$, it suffices to show for small enough $\delta>0$ that for all $1\leq j\leq k$ we have 
\begin{align}\label{eq:BC}
\nu(\{x\in X\colon \limsup_{N\to \infty}\,(\log N)^{-\delta}\mathbb{E}_{n\leq N}|f_j(T^{P_j(n)}x)|^{q_j-\varepsilon}\geq 1\})=0.    
\end{align}

Using Markov's inequality and Bourgain's maximal inequality for polynomial ergodic averages (see~\cite[Theorem 1.8(iii)]{krause-mirek-tao}) together with the fact that $f_j^{q_j-\varepsilon}\in L^{q_j/(q_j-\varepsilon)}(X)$, we see that for any $1\leq j\leq k$ we have
\begin{align*}
&\nu(\{x\in X\colon \sup_{N\in [2^{2^m},2^{2^{m+1}}]}\mathbb{E}_{n\leq N}|f_j(T^{P_j(n)}x)|^{q_j-\varepsilon}\geq 2^{(\delta/2)m} \})\\
&\quad \leq (2^{-(\delta/2)m})^{-q_j/(q_j-\varepsilon)}\int_X \left(\sup_{N\geq 1}\mathbb{E}_{n\leq N}|f_j(T^{P_j(n)}x)|^{q_j-\varepsilon}\right)^{q_j/(q_j-\varepsilon)} \d \nu(x)\\
&\quad \ll 2^{-q_j\delta m/(2(q_j-\varepsilon))}\|f_j\|_{L^{q_j}(X)}^{q_j}.  \end{align*}
Since $\sum_{m\geq 1}2^{-cm}<\infty$ for any $c>0$, the claim~\eqref{eq:BC} follows from the above estimate and the Borel--Cantelli lemma. 
\end{proof}

We will reduce Theorem~\ref{thm_polyergodic-general} to the following quantitative $L^1$ estimate.

\begin{proposition}\label{prop:L1polyergodic}
Let the notation and assumptions be as in Theorem~\ref{thm_polyergodic-general}. Then, for all $N\geq 3$, we have
\begin{align}\label{eq:nubound}
\|A_N^{P_1,\ldots,P_k}(\theta;f_1,\ldots, f_k)\|_{L^1(X)}\ll_{A,K,P_1,\ldots, P_k} \left(\prod_{j=1}^k(1+\|f_j\|_{L^{q_j}(X)})\right)^{B(q_1,\ldots, q_k)}(\log N)^{-A}    
\end{align}
for some $B(q_1,\ldots, q_k)>0$.
\end{proposition}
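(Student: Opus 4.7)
The plan is to reduce the $L^1$ bound to one of the generalised von Neumann theorems from Section~\ref{sec:GVNT} after truncating $\theta$ and the $f_j$'s and then applying a $T$-shift averaging trick. Let $M_\theta, M_f\geq 1$ be parameters, to be chosen as sufficiently large powers of $\log N$. I would decompose $\theta=\theta_{\mathrm{g}}+\theta_{\mathrm{b}}$ with $\theta_{\mathrm{g}}(n)=\theta(n)1_{d(n)\leq M_\theta}$ (so $|\theta_{\mathrm{g}}|\leq (\log N)^K M_\theta^K$), and each $f_j=f_j^{\mathrm{g}}+f_j^{\mathrm{b}}$ with $f_j^{\mathrm{g}}=f_j\cdot 1_{|f_j|\leq M_f}$. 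Multilinear expansion of $A_N^{P_1,\ldots,P_k}(\theta;f_1,\ldots,f_k)$ produces $2^{k+1}$ terms, which I split into the single ``all-good'' term, and $2^{k+1}-1$ terms involving at least one bad factor.

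For each term containing a bad factor I would invoke Lemma~\ref{le_L1}. The strict hypothesis $\sum_j 1/q_j<1$ leaves slack to pick exponents $q_j'\in(1,q_j)$ and $r>1$ with $\sum_j 1/q_j'+1/r=1$; the standard tail bound $\|f_j^{\mathrm{b}}\|_{L^{q_j'}}\ll M_f^{-(q_j-q_j')/q_j'}\|f_j\|_{L^{q_j}}^{q_j/q_j'}$ (and H\"older's inequality on the probability space to bound $\|f_j^{\mathrm{g}}\|_{L^{q_j'}}\leq \|f_j\|_{L^{q_j}}$), together with the Shiu-type moment estimate
\begin{equation*}
\left(\mathbb{E}_{n\leq N}|\theta_{\mathrm{b}}(n)|^r\right)^{1/r}\ll_{K,r}(\log N)^{O_{K,r}(1)}M_\theta^{-c_{K,r}},
\end{equation*}
which follows from $|\theta(n)|\leq (\log n)^K d(n)^K$ and Shiu's bound on moments of $d$, then force each bad contribution to be $\ll (\log N)^{-A}\prod_j(1+\|f_j\|_{L^{q_j}})^{B}$ once $M_\theta, M_f$ grow polylogarithmically in $N$.

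For the all-good term $A_N^{\mathrm{g}}\coloneqq A_N^{P_1,\ldots,P_k}(\theta_{\mathrm{g}};f_1^{\mathrm{g}},\ldots,f_k^{\mathrm{g}})$ I would apply the $T$-shift trick. Writing $\sigma(x)=\overline{\sgn(A_N^{\mathrm{g}}(x))}$ and exploiting $T$-invariance, $\int_X \sigma(x) A_N^{\mathrm{g}}(x)\,d\nu(x)=\int_X \sigma(T^m x) A_N^{\mathrm{g}}(T^m x)\,d\nu(x)$ for every $m\in\Z$; averaging over $m\in[M']$ with $M'\asymp N^d$ and expanding gives
\begin{equation*}
\|A_N^{\mathrm{g}}\|_{L^1(X)}\leq \int_X \left|\frac{1}{N}\sum_{n\in[N]}\theta_{\mathrm{g}}(n)\mathbb{E}_{m\in[M']}\sigma(T^m x)\prod_{j=1}^k f_j^{\mathrm{g}}(T^{m+P_j(n)}x)\right|d\nu(x).
\end{equation*}
For each fixed $x$, after restricting the orbit sequences $m\mapsto\sigma(T^m x)$ and $m\mapsto f_j^{\mathrm{g}}(T^m x)$ to $[-CN^d,CN^d]$ (harmless since $m\in[M']$ and $|P_j(n)|\ll N^d$), the inner expression is a weighted polynomial counting operator of the form~\eqref{eq:Lambda}. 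Applying Theorem~\ref{thm:GVNT1} under hypothesis (i), or Theorem~\ref{thm:GVNT2} under hypothesis (ii)---after scaling the inputs to be $1$-bounded using $\|\sigma\|_\infty\leq 1$, $\|f_j^{\mathrm{g}}\|_\infty\leq M_f$, and (in case (ii) only) $\|\theta_{\mathrm{g}}\|_\infty\ll (\log N)^K M_\theta^K$---then yields a bound, uniform in $x$, of the shape $M_f^k(\log N)^{O_K(1)}M_\theta^{O_K(1)}\|\theta_{\mathrm{g}}\|_*^{1/K'}$, where $\|\cdot\|_*$ denotes $\|\cdot\|_{U^{s+1}[N]}$ (respectively $\|\cdot\|_{u^{d+1}[N]}$) and $K'=1$ (respectively $K'=K'(d)$).

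Finally, the triangle inequality gives $\|\theta_{\mathrm{g}}\|_*\leq \|\theta\|_* + \|\theta_{\mathrm{b}}\|_*$: the hypothesis provides arbitrary polylogarithmic decay of $\|\theta\|_*$, while the crude bound $\|\theta_{\mathrm{b}}\|_{U^{s+1}[N]}\leq(\mathbb{E}_{n\leq N}|\theta_{\mathrm{b}}(n)|^{2^{s+1}})^{1/2^{s+1}}$ combined with the divisor-moment estimate makes $\|\theta_{\mathrm{b}}\|_*$ arbitrarily small once $M_\theta$ is large. Choosing the polylogarithmic decay exponent in the hypothesis large enough relative to $M_\theta, M_f$, $k$, $d$ and $K$ closes the argument. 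The main technical obstacle is the bookkeeping for the $1/K'$-power dependence in Theorem~\ref{thm:GVNT2}: the polylogarithmic losses from scaling by $M_f^k$ and $\|\theta_{\mathrm{g}}\|_\infty$ must be absorbed into the arbitrary polylogarithmic uniformity of the $u^{d+1}$ norm, which constrains the relative sizes of the various $\log N$-powers, but is otherwise routine.
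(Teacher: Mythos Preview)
Your proposal is correct and follows essentially the same approach as the paper: truncate the $f_j$ and $\theta$ to bounded functions, control the tail contributions by Lemma~\ref{le_L1} together with Shiu-type divisor moment bounds, and for the bounded part use $T$-invariance to average in an auxiliary shift $m\in[N^d]$ so as to invoke Theorem~\ref{thm:GVNT1} or~\ref{thm:GVNT2}. The only cosmetic differences are that the paper truncates $f_j$ and $\theta$ in two successive stages rather than simultaneously, and passes through $L^2$ via Cauchy--Schwarz and duality with a test function $\phi$ rather than using the sign function $\sigma$ directly; both variants lead to the same pointwise counting operator estimate.
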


\begin{proof}[Proof that Proposition~\ref{prop:L1polyergodic} implies Theorem~\ref{thm_polyergodic-general}] We are going to apply Lemma~\ref{le:borel-cantelli}. Recall the notation~\eqref{eq:AN}. Let $r\geq 1$ satisfy $1/r+1/q_1+\cdots +1/q_k=1$. Applying first the triangle inequality, then Lemma~\ref{le_L1} and finally~\eqref{eq:thetabound} and Shiu's bound, we see that for any $B>0$, $N\geq 3$ and $M\in [N,(1+(\log N)^{-(B/2-1)})N]$ we have
\begin{align*}
&\|A_M^{P_1,\ldots, P_k}(\theta;f_1,\ldots, f_k)-A_N^{P_1,\ldots, P_k}(\theta;f_1,\ldots, f_k)\|_{L^{1}(X)}\\
&\quad \leq \int_X  \left(\frac{1}{N}\sum_{N\leq n\leq M}|\theta(n)|\prod_{j=1}^k|f_j(T^{P_j(n)} x)|+\left(\frac{1}{N}-\frac{1}{M}\right)\sum_{n\leq N}|\theta(n)|\prod_{j=1}^k|f_j(T^{P_j(n)} x)|\right)   \d \nu(x)\\
&\quad \ll \left(\frac{M-N}{N}\left(\mathbb{E}_{N\leq n<M}|\theta(n)|^r\right)^{1/r}+\frac{(\log N)^{-(B/2-1)}}{N}\left(\mathbb{E}_{n\leq N}|\theta(n)|^r\right)^{1/r}\right)\prod_{j=1}^k\|f_j\|_{L^{q_j}(X)}\\
&\ll (\log N)^{-B/2+1+K+(2^{Kr}-1)/r}\prod_{j=1}^k\|f_j\|_{L^{q_j}(X)}.
\end{align*}
 If $B$ is large enough, the exponent of the logarithm above is at most $-0.49B$, say. Hence, by Lemma~\ref{le:borel-cantelli}, the conclusion of Theorem~\ref{thm_polyergodic-general} follows from~\eqref{eq:nubound}.
\end{proof}

\begin{proof}[Proof of Proposition~\ref{prop:L1polyergodic}] 
We first reduce the proof of Proposition~\ref{prop:L1polyergodic} to the case $q_1=\cdots=q_k=\infty$. 

\textbf{Reduction to the case of bounded functions.} We claim that it suffices to prove Proposition~\ref{prop:L1polyergodic} in the case $q_1=\cdots=q_k=\infty$. Suppose that this case has been proven. 
Let $1<q_1,\ldots, q_k<\infty$ and $f_i\in L^{q_i}(X)$ for $i\in \{1,\ldots, k\}$ (we can assume that all the $q_i$ are $<\infty$, since we have $\|f\|_{L^q(X)}\leq \|f\|_{L^{\infty}(X)}$ for any $q<\infty$ and any measurable $f$). Also let $A>0$, and let $C>0$ be large enough in terms of $A,K,q_1,\ldots q_k$.

For $j\in\{1,\ldots, k\}$ and $N\geq 2$, we split
\begin{align*}
f_j=f_{j,N}^{(1)}+f_{j,N}^{(2)},\quad \textnormal{where}\quad f_{j,N}^{(1)}(x)=f_{j}(x)1_{|f_j(x)|\leq (\log N)^{C}},\,\, f_{j,N}^{(2)}(x)=f_{j}(x)1_{|f_j(x)|>(\log N)^{C}}.     
\end{align*}
Then by linearity we see that
\begin{align}\label{eq:AN-linear}\begin{split}
&A_N^{P_1,\ldots, P_k}(\theta; f_1, \ldots, f_k)\\
\quad &=A_N^{P_1,\ldots, P_k}(\theta;f_{1,N}^{(1)},\ldots, f_{k,N}^{(1)})+\sum_{(i_1,\ldots, i_k)\in [2]^k\setminus (1,\ldots, 1)}A_N^{P_1,\ldots, P_k}(\theta;f_{1,N}^{(i_1)},\ldots, f_{k,N}^{(i_k)}).    
\end{split}
\end{align}
Since $\|(\log N)^{-C}f_{j,N}^{(1)}\|_{L^{\infty}(X)}\leq 1$ for $j\in \{1,\ldots, k\}$, by the case $q_1=\cdots=q_k=\infty$ of Proposition~\ref{prop:L1polyergodic} we have
\begin{align*}
\|A_N^{P_1,\ldots, P_k}(\theta;f_{1,N}^{(1)},\ldots, f_{k,N}^{(1)})\|_{L^1(X)}\ll_{A,C,K,P_1,\ldots, P_k} (\log N)^{-A}. \end{align*}

To bound the error term in~\eqref{eq:AN-linear}, it suffices to show that for $(i_1,\ldots, i_k)\in [2]^k\setminus (1,\ldots, 1)$ we have 
\begin{align*}
&\|A_N^{P_1,\ldots, P_k}(\theta;f_{1,N}^{(i_1)},\ldots, f_{k,N}^{(i_k)})\|_{L^{1}(X)}\ll_{C,q_1,\ldots, q_k} (\log N)^{K+(2^{Kr}-1)/r-\delta C}\prod_{j=1}^k (1+\|f_j\|_{L^{q_j}(X)}^{1+\delta})  \end{align*}
for some $\delta=\delta(q_1,\ldots, q_k)>0$, since $C$ can be taken to be large enough in terms of $A,K,q_1,\ldots, q_k$ so that $K(2^{Kr}-1)/r-\delta C\leq -A$.

For $(i_1,\ldots, i_k)\in [2]^k\setminus (1,\ldots, 1)$, there is some $\ell$ such that $i_{\ell}=2$; for the sake of notation, assume that $\ell=1$. 
Fix some $\delta>0$ and $1\leq r<\infty$ such that  $\frac{1}{r}+\frac{1+\delta}{q_1}+\frac{1}{q_2}+\cdots+\frac{1}{q_k}=1$. 
Using the triangle inequality, Lemma~\ref{le_L1} and Shiu's bound combined with the assumption $|\theta(n)|\leq (\log n)^{K}d(n)^K$, we obtain
\begin{align*}
&\|A_N^{P_1,\ldots, P_k}(\theta;f_{1,N}^{(i_1)},\ldots, f_{k,N}^{(i_k)})\|_{L^{1}(X)}\\
&\quad \leq (\log N)^{-\delta C}\|A_N^{P_1,\ldots, P_k}(\theta;|f_{1,N}^{(i_1)}|^{1+\delta},\ldots, |f_{k,N}^{(i_k)}|)\|_{L^{1}(X)}\\
&\quad \leq (\log N)^{-\delta C}\left(\mathbb{E}_{n\leq N}|\theta(n)|^r\right)^{1/r}\|f_1\|_{L^{q_1}(X)}^{1+\delta}\cdots\|f_k\|_{L^{q_k}(X)}\\
&\quad \ll_{K,r}(\log N)^{K+(2^{Kr}-1)/r-\delta C}\|f_1\|_{L^{q_1}(X)}^{1+\delta}\cdots\|f_k\|_{L^{q_k}(X)}.
\end{align*}
 Now the proof of Proposition~\ref{prop:L1polyergodic} has been reduced to the case $q_1=\cdots=q_k=\infty$.

\textbf{The case of bounded functions.} It now remains to show~\eqref{eq:nubound} in the case $q_1=\cdots=q_k=\infty$. In the rest of the proof, we abbreviate $A_N(x)\coloneqq A_N^{P_1,\ldots, P_k}(\theta;f_1,\ldots, f_k)(x)$.

By Cauchy--Schwarz, it suffices to show that
\begin{align*}
\int_{X}|A_N(x)|^2\d \nu(x)\ll_{A,K,P} \|f_1\|_{L^{\infty}(X)}^2\cdots\|f_k\|_{L^{\infty}(X)}^2(\log N)^{-A}.     
\end{align*}
The assumption $|\theta(n)|\leq (\log n)^{K}d(n)^{K}$ gives $$\|A_N\|_{L^{\infty}(X)}\ll (\log N)^{K+2^K-1}\|f_1\|_{L^{\infty}(X)}\cdots \|f_k\|_{L^{\infty}(X)},$$
so it suffices to show that for any  $\phi\in L^{\infty}(X)$ we have
\begin{align}\label{eq:phiAn}
\int_{X}\phi(x)A_N(x)\d \nu(x)\ll_{A,K,P}  \|f_1\|_{L^{\infty}(X)}\cdots \|f_k\|_{L^{\infty}(X)} \|\phi\|_{L^{\infty}(X)}(\log N)^{-A}.        
\end{align} 
By the $T$-invariance of $\nu$, the claim~\eqref{eq:phiAn} is equivalent to
\begin{align}\label{eq:desired}\begin{split}
&\int_{X}\frac{1}{N^{d+1}}\sum_{m\in [N^d],n\in [N]}\theta(n)\phi(T^m x)f_1(T^{m+P_1(n)}x)\cdots f_k(T^{m+P_k(n)}x)\d \nu(x)\\
&\quad \ll_{A,K,P_1,\ldots, P_k}   \|f_1\|_{L^{\infty}(X)}\cdots \|f_k\|_{L^{\infty}(X)}\|\phi\|_{L^{\infty}(X)}(\log N)^{-A}. 
\end{split}
\end{align}

By the definition of the $L^{\infty}(X)$ norm, there exists a set $X'\subset X$ such that
\begin{align*}
\nu(X')=1\quad \textnormal{and}\quad |\phi(x)|\leq \|\phi\|_{L^{\infty}(X)},\,|f_i(x)|\leq \|f_i\|_{L^{\infty}(X)}\,\, \textnormal{for all}\,\, 1\leq i\leq k,\, x\in X'. \end{align*}
Restricting the integral in~\eqref{eq:desired} to $X'$, it suffices to show that for all $x\in  X'$ we have the bound
\begin{align}\label{eq:pointwise1}\begin{split}
&\left|\frac{1}{N^{d+1}}\sum_{m\in [N^d],n\in [N]}\frac{\theta(n)}{(\log N)^{A}}\phi(T^m x)f_1(T^{m+P_1(n)}x)\cdots f_k(T^{m+P_k(n)}x)\right|\\
&\quad \ll_{A,P_1,\ldots, P_k}  \|f_1\|_{L^{\infty}(X)}\cdots \|f_k\|_{L^{\infty}(X)}\|\phi\|_{L^{\infty}(X)}(\log N)^{-2A}. 
\end{split}
\end{align}
Write $\theta=\theta_1+\theta_2$ where $\theta_1(n)=\theta(n)1_{|\theta(n)|\leq (\log n)^{A}}$, $\theta_2(n)=\theta(n)1_{|\theta(n)|> (\log n)^{A}}$. Since $d(n)^{K}\geq (\log n)^{A-K}$ in the support of $\theta_2$, we have
\begin{align}\label{eq:alogloga}\begin{aligned}
\mathbb{E}_{n\leq N}|\theta_2(n)|&\ll (\log N)^{K}\mathbb{E}_{n\leq N}(\log(n+1))^{-(A-K)\log \log A}d(n)^{K+K\log \log A}\\
&\ll (\log N)^{-(A\log \log A)/2}    
\end{aligned}
\end{align}
if $A$ is large enough in terms of $K$.

Now, if assumption (i) of the theorem holds, by the triangle inequality and~\eqref{eq:alogloga}  we have
\begin{align}\label{eq:Us1}\begin{aligned}
 \|\theta_1\|_{U^s[N]}\leq \|\theta\|_{U^s[N]}+\|\theta_2\|_{U^s[N]}\leq \|\theta\|_{U^s[N]}+\|\theta_2\|_{U^1[N]}\ll_{A,s,P_1,\ldots, P_k} (\log N)^{-(A\log \log A)/2}. 
 \end{aligned}
\end{align}
If instead assumption (ii) holds, we similarly have
\begin{align}\label{eq:us1}
 \|\theta_1\|_{u^s[N]}\ll_{A,s,P_1,\ldots, P_k} (\log N)^{-(A\log \log A)/2}.  
\end{align}
In either case, since we may assume that  $\phi,f_1,\ldots, f_k$ are supported on $[-CN^,CN^d]$ for some $C\ll_{P} 1$ and since $|\theta_1(n)|(\log n)^{-A}\leq 1$, we can use either Theorem~\ref{thm:GVNT1} or~\ref{thm:GVNT2} (depending on whether we have~\eqref{eq:us1} or~\eqref{eq:Us1}) to conclude that~\eqref{eq:pointwise1} holds with $\theta_1$ in place of $\theta$. Hence it suffices to show that~\eqref{eq:pointwise1} holds with $\theta_2$ in place of $\theta$. For this it suffices to show that
\begin{align*}
\mathbb{E}_{n\leq N}|\theta_2(n)|\ll (\log N)^{-A}.     
\end{align*}
Here the left-hand side is
\begin{align*}
\leq (\log N)^{K-2(A-K)}\mathbb{E}_{n\leq N}d(n)^{2K}\ll (\log N)^{-A} 
\end{align*}
if $A$ is large enough in terms of $K$. This completes the proof. 
\end{proof}

Lastly, we prove Theorem~\ref{thm_commuting}.

\begin{proof}[Proof of Theorem~\ref{thm_commuting}] Following the proof of Theorem~\ref{thm_polyergodic-general} verbatim, we reduce matters to showing that
\begin{align}\label{eq:pointwise1b}\begin{split}
&\left|\frac{1}{N^{k+1}}\sum_{m_1,\ldots,m_k,n\in [N]}\mu(n)\phi(T_1^{m_1}\cdots T_k^{m_k} x)\prod_{j=1}^k f_j(T_1^{m_1}\cdots T_k^{m_k}T_j^nx)\right|\\
&\quad \ll_{A}  \|f_1\|_{L^{\infty}(X)}\cdots \|f_k\|_{L^{\infty}(X)}\|\phi\|_{L^{\infty}(X)}(\log N)^{-A}
\end{split}
\end{align}
for any functions $\phi,f_1,\ldots, f_k\in L^{\infty}(X)$ and any $x\in X$ for which $|\phi(x)|\leq \|\phi\|_{L^{\infty}(X)}, |f_j|\leq \|f_j\|_{L^{\infty}(X)}$. By setting \begin{align*}
F_j((m_1,\ldots, m_k))&=f_j(T_1^{m_1}\cdots T_k^{m_k}x),\\
G((m_1,\ldots, m_k))&=\phi(T_1^{m_1}\cdots T_k^{m_k}x),
\end{align*}
the estimate~\eqref{eq:pointwise1b} reduces to
\begin{align*}
&\left|\frac{1}{N^{k+1}}\sum_{\mathbf{m}\in [N]^k,n\in [N]}\mu(n)G(\mathbf{m})F_1(\mathbf{m}+(n,0,\ldots, 0))\cdots F_k(\mathbf{m}+(0,\ldots, 0,n))\right|\\
&\quad \ll_{A}  \|F_1\|_{L^{\infty}(X)}\cdots \|F_k\|_{L^{\infty}(X)}\|G\|_{L^{\infty}(X)}(\log N)^{-A}.
\end{align*}
But this bound follows directly from Lemmas~\ref{le:GVNT-multi} and~\ref{le:leng}.
\end{proof}

\bibliography{refs}
\bibliographystyle{plain}

\end{document}